\newcommand{\kom}[1]{}
\renewcommand{\kom}[1]{{\bf [#1]}}
 \def\1{\raisebox{2pt}{\rm{$\chi$}}}
\def\a{{\bf a}}
\newtheorem{theorem}{Theorem}[section]
\newtheorem{lemma}[theorem]{Lemma}
\newtheorem{proposition}[theorem]{Proposition}
\newtheorem{definition}[theorem]{Definition}
\newtheorem{remark}[theorem]{Remark}
\numberwithin{equation}{section}
\newcommand{\R}{{\mathbb R}}
\newcommand{\N}{{\mathbb N}}
 \newcommand{\eps}{{\varepsilon}}
 \def\1{\raisebox{2pt}{\rm{$\chi$}}}
\newcommand{\abs}[1]{\left|#1\right|}
\newcommand{\norm}[1]{\left|\left|#1\right|\right|}
\newcommand{\Rn}{\mathbb{R}^n}
\newcommand{\osc}{\operatorname{osc}}
\def\vint_#1{\mathchoice%
          {\mathop{\kern 0.2em\vrule width 0.6em height 0.69678ex depth -0.58065ex
                  \kern -0.8em \intop}\nolimits_{\kern -0.4em#1}}%
          {\mathop{\kern 0.1em\vrule width 0.5em height 0.69678ex depth -0.60387ex
                  \kern -0.6em \intop}\nolimits_{#1}}%
          {\mathop{\kern 0.1em\vrule width 0.5em height 0.69678ex
              depth -0.60387ex
                  \kern -0.6em \intop}\nolimits_{#1}}%
          {\mathop{\kern 0.1em\vrule width 0.5em height 0.69678ex depth -0.60387ex
                  \kern -0.6em \intop}\nolimits_{#1}}}
\def\vintslides_#1{\mathchoice%
          {\mathop{\kern 0.1em\vrule width 0.5em height 0.697ex depth -0.581ex
                  \kern -0.6em \intop}\nolimits_{\kern -0.4em#1}}%
          {\mathop{\kern 0.1em\vrule width 0.3em height 0.697ex depth -0.604ex
                  \kern -0.4em \intop}\nolimits_{#1}}%
          {\mathop{\kern 0.1em\vrule width 0.3em height 0.697ex depth -0.604ex
                  \kern -0.4em \intop}\nolimits_{#1}}%
          {\mathop{\kern 0.1em\vrule width 0.3em height 0.697ex depth -0.604ex
                  \kern -0.4em \intop}\nolimits_{#1}}}
\newcommand{\aveint}[2]{\mathchoice%
          {\mathop{\kern 0.2em\vrule width 0.6em height 0.69678ex depth -0.58065ex
                  \kern -0.8em \intop}\nolimits_{\kern -0.45em#1}^{#2}}%
          {\mathop{\kern 0.1em\vrule width 0.5em height 0.69678ex depth -0.60387ex
                  \kern -0.6em \intop}\nolimits_{#1}^{#2}}%
          {\mathop{\kern 0.1em\vrule width 0.5em height 0.69678ex depth -0.60387ex
                  \kern -0.6em \intop}\nolimits_{#1}^{#2}}%
          {\mathop{\kern 0.1em\vrule width 0.5em height 0.69678ex depth -0.60387ex
                  \kern -0.6em \intop}\nolimits_{#1}^{#2}}}
\newcommand{\ol}{\overline}
\newcommand{\Om}{\Omega}
\newcommand{\dist}{\operatorname{dist}}
\newcommand{\diam}{\operatorname{diam}}
\newcommand{\tr}{\operatorname{tr}}
\renewcommand{\a}{\alpha}
\begin{document}

\title[$p$-Laplacian type problems in non-divergence form]{Remarks on regularity for $p$-Laplacian type equations in non-divergence form}
\author[Attouchi]{Amal Attouchi}
\address{Department of Mathematics and Statistics, University of
Jyv\"askyl\"a, PO~Box~35, FI-40014 Jyv\"askyl\"a, Finland}
\email{amal.a.attouchi@jyu.fi}

\author[Ruosteenoja]{Eero Ruosteenoja}
\address{Department of Mathematics and Statistics, University of
Jyv\"askyl\"a, PO~Box~35, FI-40014 Jyv\"askyl\"a, Finland}
\email{eero.ruosteenoja@jyu.fi}

\date{\today}
\keywords{$p$-Laplacian, viscosity solutions, local $C^{1,\alpha}$ regularity, integrability of second derivatives.} 
\subjclass[2010]{35J60, 35B65, 35J92}

\begin{abstract}
We study a singular or degenerate equation in non-divergence form modeled on the $p$-Laplacian,
$$-|Du|^\gamma\left(\Delta u+(p-2)\Delta_\infty^N u\right)=f\ \ \ \ \text{in}\ \ \ \Omega.$$ We investigate local $C^{1,\alpha}$ regularity of viscosity solutions in the full range $\gamma>-1$ and $p>1$, and provide local $W^{2,2}$ estimates in the restricted cases where $p$ is close to 2 and $\gamma$ is close to 0.

\end{abstract}

\maketitle

\section{Introduction}
\label{sec:intro}
In this paper we study local regularity properties of solutions of the following nonlinear PDE,
\begin{equation}\label{genpl}
-|Du|^\gamma \Delta^N_p u:=-|Du|^\gamma\left(\Delta u+(p-2)\Delta_\infty^N u\right)=f\ \ \ \ \ \text{in}\ \ \Omega,
\end{equation}
where $\gamma\in (-1, \infty)$, $p\in (1, \infty)$,  $f\in C(\Omega)$, $\Omega\subset \R^n$ with $n\geq 2$, and $\Delta_\infty^N u:=\langle D^2u\frac{Du}{|Du|},\frac{Du}{|Du|}\rangle$ is the normalized infinity Laplacian. Equation \eqref{genpl}  is degenerate when $\gamma> 0$ and singular when $-1<\gamma\leq 0$, and it is appropriate to use the concept of viscosity solution. The operator $|Du|^\gamma \Delta^N_p u$ is a generalization of the standard $p$-Laplacian $\Delta_p u:=\text{div}\,(|D u|^{p-2}D u)$. As special cases we mention the variational $p$-Laplace equation $-\Delta_p u=f$ when $\gamma=p-2$, and the non-variational normalized $p$-Laplace equation $-\Delta^N_p u=f$ when $\gamma=0$.

Non-variational operators modeled by the $p$-Laplacian have gained increasing interest during the last 15 years. For $C^2$ domains with $W^{2, \infty}$ boundary data, Birindelli and Demengel \cite{birdem3,birdem2, birdemen1} showed global H\"older and local Lipschitz estimates for a class of fully nonlinear elliptic equations including equation \eqref{genpl} by using Ishii-Lions' method.  As a consequence of Harnack estimates, D\'avila, Felmer and Quaas \cite[Theorem 1.2]{dfqsing} proved H\"older estimates up to the boundary for the singular case $\gamma\leq 0$ and $p>1$ with H\"older continuous boundary data and domains satisfying a uniform exterior cone condition. 

Birindelli and Demengel \cite{birdemen2} and Imbert and Silvestre \cite{silimb} proved $C^{1, \alpha}$ regularity results for related equations. In \cite[Proposition 3.6]{birdemen2} the authors proved H\"older regularity of the gradient for solutions of \eqref{genpl} for $\gamma\leq 0$ and $p\geq 2$ by using approximations and a fixed point argument. In \cite{silimb} the authors used improvement of flatness to show local $C^{1, \alpha}$ regularity in the degenerate case $\gamma>0$ for viscosity solutions of the equation $|Du|^\gamma \left(F(x, D^2 u)\right)=f$, where $F$ is a uniformly elliptic operator, and the result was extended to the full range $\gamma>-1$ in \cite{birdem5}. Notice that this result does not cover equation \eqref{genpl} because of discontinuous gradient dependence. In \cite{atparuo} with Parviainen, we showed local $C^{1,\alpha}$ regularity of solutions of \eqref{genpl} when $\gamma=0$ and $p>1$. For the special case of radial solutions, Birindelli and Demengel \cite{ birdemen1} showed $C^{1,\alpha}$ regularity of solutions of \eqref{genpl} for $\gamma>-1$ and $p>1$.

In this paper we extend these regularity results to viscosity solutions of \eqref{genpl} when $\gamma>-1$ and $p>1$, and provide some results on the existence and the integrability of the second derivatives. Our first result concerns the interior  regularity of the gradient.

\begin{theorem}\label{th1}
Let $\gamma>-1$, $p>1$, and $f\in L^{\infty}(\Omega)\cap C(\Omega)$. Then there exists $\alpha=\alpha(p,n, \gamma)>0$ such that any viscosity solution $u$ of
  \eqref{genpl} is in $C^{1,\alpha}_{\text{\emph{loc}}}(\Omega)$, and for any $\Omega' \subset \subset \Omega$,
$$
[u]_{C^{1,\alpha}(\Omega')} \le C=C \left(p,n,d, \gamma, d',||u||_{L^\infty(\Omega)},\norm{f}_{L^\infty(\Omega)} \right),
$$
where $d=\diam(\Om)$ and $d'=\dist(\Om', \partial\Om)$.
\end{theorem}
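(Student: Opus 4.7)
The approach will be a regularization argument combined with an improvement-of-oscillation scheme. Because equation \eqref{genpl} is both singular/degenerate in $Du$ and has a discontinuous gradient dependence through $\Delta^N_\infty$, I would first replace it by the regularized problem
\begin{equation*}
-(\eps^2+|Du_\eps|^2)^{\gamma/2}\left(\Delta u_\eps + (p-2)\frac{\langle D^2 u_\eps\, Du_\eps, Du_\eps\rangle}{\eps^2+|Du_\eps|^2}\right)=f
\end{equation*}
on a subdomain with the same boundary data as $u$. This is a uniformly elliptic fully nonlinear equation with Lipschitz gradient dependence, so Perron's method together with standard smoothness theory provides a classical solution $u_\eps$, and the stability theorem for viscosity solutions forces $u_\eps \to u$ locally uniformly as $\eps \to 0$. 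It therefore suffices to prove the $C^{1,\alpha}$ estimate for $u_\eps$ with constants independent of $\eps$.

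The first step is to establish a uniform interior Lipschitz bound for $u_\eps$. I would use the Ishii--Lions doubling variable technique, testing with an auxiliary function of the form $u_\eps(x)-u_\eps(y)-L\omega(|x-y|)-M|x-x_0|^2$ for a concave $\omega(t)=t-\kappa t^{3/2}$. The uniform ellipticity of the regularized operator, together with the positive lower bound on the eigenvalues of $I+(p-2)\widehat q\otimes \widehat q$ (where $\widehat q$ is the regularized gradient direction), produces the estimate, as in the arguments of Birindelli--Demengel \cite{birdem3, birdemen1}. The point is that all constants depend only on $n,p,\gamma,\|u\|_\infty,\|f\|_\infty,d,d'$ and not on $\eps$.

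The main step is the $C^{1,\alpha}$ upgrade, which I would run by an improvement-of-flatness scheme adapted to the regularized problem. At a point $x_0\in\Omega'$ and scale $r>0$ one considers the rescaled function $v_r(x)=r^{-(1+\alpha)}\left(u_\eps(x_0+rx)-u_\eps(x_0)-r\langle q,x\rangle\right)$ for a suitable vector $q$ and runs the following dichotomy. Either $|Du_\eps|$ admits a positive lower bound on a subball, in which case the factor $|Du_\eps|^\gamma$ is comparable to a positive constant and the equation becomes uniformly elliptic with continuous coefficients, so the $C^{1,\alpha}$ estimate of Imbert--Silvestre \cite{silimb} (or its extension in \cite{birdem5}) applies; or $|Du_\eps|$ is uniformly small at that scale, in which case a compactness argument using the uniform Lipschitz bound extracts a subsequential limit solving a simpler limiting equation whose smooth regularity forces oscillation decay of $Dv_r$ at a geometric rate $\theta\in(0,1)$. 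Iterating across the dyadic scales $r_k=\theta^k r_0$ yields a discrete $C^{1,\alpha}$ bound on $Du_\eps$, and passing to the limit $\eps\to 0$ via Arzel\`a--Ascoli gives the theorem.

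The main technical obstacle is the small-gradient alternative in the iteration. When $|Du_\eps|$ is comparable to $\eps$, the coefficient $(\eps^2+|Du_\eps|^2)^{\gamma/2}$ is delicate for both $\gamma>0$ (degeneracy forces a possibly trivial equation, which requires a nonlinear rescaling in the amplitude of $u_\eps$) and for $-1<\gamma<0$ (singularity must not be amplified through the rescaling). Quantifying closeness to an affine model therefore has to be done by a contradiction--compactness argument in which the uniform Lipschitz bound and the stability of viscosity solutions are used together to identify the limit equation, and in which the choice of normalization is tuned against the exponent $\gamma$. Accommodating the full range $\gamma>-1$, $p>1$ simultaneously, under the discontinuous normalized infinity Laplacian dependence, is where the bulk of the work lies.
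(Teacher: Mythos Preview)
Your overall architecture (uniform Lipschitz bound, improvement of flatness by compactness, iteration) is the same as the paper's, but there are two substantive differences, one of which is a genuine gap.

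\textbf{The regularization step has a gap.} You assert that stability of viscosity solutions ``forces $u_\eps\to u$ locally uniformly''. Stability only tells you that any locally uniform limit of $u_\eps$ is \emph{a} viscosity solution of \eqref{genpl} with the given boundary data; to conclude that this limit is $u$ you need uniqueness of the Dirichlet problem for \eqref{genpl}, which is \emph{not} known for general bounded $f$. The paper explicitly flags this (``we can prove uniqueness only for $f=0$ \ldots\ or $f$ with a constant sign'') and, when it does use regularization in Section~\ref{chapter4}, circumvents the issue by adding a zeroth-order term $\lambda u$ so that the modified problem has a unique solution. For Theorem~\ref{th1} the paper avoids regularization altogether and works directly with the viscosity solution $u$; the compactness needed in the improvement-of-flatness step comes from equi-H\"older estimates for $u$ and its deviations from planes, not from smoothness of approximations.

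\textbf{The dichotomy is phrased on the wrong quantity.} You split according to whether $|Du_\eps|$ is bounded below or small at a given scale. The paper's iteration produces, at step $k$, the rescaled deviation $w_k(x)=r_k^{-1-\alpha}(u(r_kx)-q_k\cdot r_kx)$, which solves \eqref{devia} with slope parameter $\xi_k=q_k/r_k^\alpha$. The relevant dichotomy is therefore on $|q|$ (equivalently $|\xi_k|$), not on $|Du|$: for $|q|$ large one uses Ishii--Lions (Lemma~\ref{prop1}) or uniform ellipticity away from the singularity (Lemma~\ref{lip1}), and for $|q|$ bounded one uses the H\"older estimate of Imbert--Silvestre type that holds only where the gradient is large (Lemma~\ref{lipe2}). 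In the contradiction argument (Lemma~\ref{flatle}) the limit equation is identified according to whether $q_j$ stays bounded (limit is \eqref{unifeq}, hence \eqref{homeo} by Lemma~\ref{limitequequiv}) or $|q_j|\to\infty$ (limit is the linear constant-coefficient equation \eqref{der2}); both limits have $C^{1,\beta}$ estimates independent of $q$. Your ``small-$|Du_\eps|$'' alternative is left vague (``a simpler limiting equation''), and the large-gradient case invokes \cite{silimb,birdem5}, which do not cover the discontinuous $\Delta_\infty^N$ dependence; the paper instead reaches the needed limiting regularity via Lemma~\ref{lip2} and the linear equation \eqref{der2}.
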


An iterative argument leads us to analyze compactness and regularity of deviations of solutions $u$ from planes, $w(x)=u(x)-q\cdot x$ for some $q\in \R^n$, and the key estimate for these deviations is called improvement of flatness, Lemma \ref{flatle}. A key step in the proof is to obtain Arzel\`a-Ascoli type compactness for deviations $u(x)-q\cdot x$. The main difficulty is that for $\gamma\neq 0$, the ellipticity constants of the equation satisfied by $w$ depend on $|q|$. To overcome this problem, we use both Ishii-Lions' method and Alexandrov-Bakelman-Pucci (ABP for short) estimate in the proofs of Lemmas \ref{prop1}, \ref{lip1}, and \ref{lipe2}.

Our second result concerns integrability of the second derivatives when $\gamma$ is negative and the range of $p$ is restricted.

\begin{theorem}\label{th2}
Let $\gamma\in(-1,0]$, $p\in (1,3+\frac{2}{n-2})$, and $f\in L^{\infty}(\Omega)\cap C(\Omega)$. Then any viscosity solution $u$ of \eqref{genpl} belongs to $W^{2,2}_{\text{\emph{loc}}}(\Omega)$, and for any $\Omega''\subset \subset \Omega'\subset \subset \Omega$, we have
\begin{equation}
\norm{u}_{W^{2,2}(\Omega'')}\leq C=C(p,n,\gamma,d',d'',\norm{u}_{L^\infty(\Omega)},\norm{f}_{L^\infty(\Omega)}), 
\end{equation}
where $d'=\diam(\Om')$ and $d''=\dist(\Om'', \partial\Om')$.
\end{theorem}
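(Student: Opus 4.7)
My strategy is to regularize the equation, establish a Cordes-type $W^{2,2}$ estimate uniformly in the regularization parameter, and then pass to the limit. I would consider classical solutions $u_\varepsilon\in C^{2,\beta}_{\text{loc}}$ of the non-degenerate, non-singular approximation
\[
-(|Du_\varepsilon|^2+\varepsilon^2)^{\gamma/2}\Big(\Delta u_\varepsilon+(p-2)\Big\langle D^2 u_\varepsilon\,\eta_\varepsilon,\eta_\varepsilon\Big\rangle\Big)=f_\varepsilon\quad\text{in }\Omega',
\]
where $\eta_\varepsilon:=Du_\varepsilon/\sqrt{|Du_\varepsilon|^2+\varepsilon^2}$ and $f_\varepsilon\in C^\infty$ approximates $f$. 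Classical Schauder theory provides the $C^{2,\beta}$ smoothness of $u_\varepsilon$. Applying (an adapted version of) Theorem \ref{th1} to this regularized operator produces a $C^{1,\alpha}$ bound uniform in $\varepsilon$, so along a subsequence $u_\varepsilon\to u$ locally in $C^{1,\beta}$ with $\norm{Du_\varepsilon}_{L^\infty(\Omega')}\leq M$ independently of $\varepsilon$.

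Dividing by $(|Du_\varepsilon|^2+\varepsilon^2)^{\gamma/2}>0$, the equation takes the linear non-divergence form
\[
a^\varepsilon_{ij}(x)D_{ij}u_\varepsilon=g_\varepsilon,\qquad a^\varepsilon_{ij}=\delta_{ij}+(p-2)(\eta_\varepsilon)_i(\eta_\varepsilon)_j,\qquad g_\varepsilon=-f_\varepsilon(|Du_\varepsilon|^2+\varepsilon^2)^{-\gamma/2}.
\]
Since $\gamma\leq 0$ and $|Du_\varepsilon|$ is uniformly bounded, $\norm{g_\varepsilon}_{L^\infty(\Omega')}\leq C$ uniformly in $\varepsilon$. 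The matrix $A^\varepsilon$ has eigenvalues $1$ (multiplicity $n-1$) and $\sigma_\varepsilon:=1+(p-2)|\eta_\varepsilon|^2$, with $|\eta_\varepsilon|\in[0,1)$. A direct computation shows that the Cordes ratio
\[
\frac{(\tr A^\varepsilon)^2}{|A^\varepsilon|^2}=\frac{(n-1+\sigma_\varepsilon)^2}{(n-1)+\sigma_\varepsilon^2}\geq n-1+\delta_0
\]
holds for some $\delta_0>0$ independent of $\varepsilon$; this is precisely where the hypothesis $1<p<3+\frac{2}{n-2}$ enters, guaranteeing that $\sigma_\varepsilon$ stays strictly inside the admissible interval $(-1,\tfrac{n}{n-2})$ (shifted to $(0,\tfrac{n}{n-2}+1)$) uniformly in $|\eta_\varepsilon|\leq 1$.

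With the uniform Cordes condition, I would carry out the classical Miranda--Talenti argument: set $\lambda_\varepsilon:=\tr(A^\varepsilon)/|A^\varepsilon|^2$ so that $|\lambda_\varepsilon A^\varepsilon-I|\leq\sqrt{1-\delta_0}$, rewrite the equation as
\[
\Delta u_\varepsilon=\lambda_\varepsilon g_\varepsilon+(I-\lambda_\varepsilon A^\varepsilon):D^2u_\varepsilon,
\]
localize with a cutoff $\varphi\in C_c^\infty(\Omega')$ equal to $1$ on $\Omega''$, and apply the identity $\int|\Delta(\varphi u_\varepsilon)|^2=\int|D^2(\varphi u_\varepsilon)|^2$. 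Absorbing the near-Laplacian term into the left-hand side and controlling the cutoff commutators by $Du_\varepsilon$ and $u_\varepsilon$ terms yields
\[
\norm{D^2u_\varepsilon}_{L^2(\Omega'')}^2\leq C\Big(\norm{g_\varepsilon}_{L^2(\Omega')}^2+\norm{Du_\varepsilon}_{L^2(\Omega')}^2+\norm{u_\varepsilon}_{L^2(\Omega')}^2\Big),
\]
with $C$ independent of $\varepsilon$. Finally, weak compactness in $W^{2,2}$ combined with the $C^{1,\beta}$ convergence $u_\varepsilon\to u$ identifies the weak limit as $u$, yielding $u\in W^{2,2}_{\text{loc}}(\Omega)$ with the claimed estimate.

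The main obstacle is the first step: constructing the approximations $u_\varepsilon$ so that (i) classical solutions exist at each level, (ii) they converge locally uniformly to the viscosity solution $u$, and (iii) the $C^{1,\alpha}$ estimate of Theorem \ref{th1} applies with constants that do not blow up as $\varepsilon\to 0$. Once this uniform Lipschitz control is secured, the Cordes step is purely algebraic and the limit passage is standard weak compactness.
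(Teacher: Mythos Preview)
Your strategy---regularize, verify Cordes, apply a Miranda--Talenti estimate, pass to the limit---is exactly the paper's. The Cordes verification and the $W^{2,2}$ bound for the regularized solutions are fine and match the paper's use of Theorem~\ref{cordes}.

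The gap is precisely the obstacle you flag at the end, and you have not supplied the idea that closes it. You do not impose boundary data on $u_\varepsilon$, and even if you set $u_\varepsilon=u$ on $\partial\Omega'$, stability of viscosity solutions only tells you that any locally uniform limit $v$ of $u_\varepsilon$ solves \eqref{genpl} in $\Omega'$ with $v=u$ on $\partial\Omega'$. To conclude $v=u$ you need a comparison/uniqueness principle for \eqref{genpl}, which is \emph{not} available for general bounded $f$ (the paper states this explicitly: uniqueness is known only for $f\equiv 0$ or $f$ of constant sign). The paper's fix is to add a zeroth-order term: one regularizes instead the equation
\[
-\Delta v_\varepsilon-(p-2)\frac{\langle D^2v_\varepsilon\,Dv_\varepsilon,Dv_\varepsilon\rangle}{|Dv_\varepsilon|^2+\varepsilon^2}+\lambda v_\varepsilon
= f_\varepsilon(|Dv_\varepsilon|^2+\varepsilon^2)^{-\gamma/2}+\lambda u\quad\text{in }\Omega',\qquad v_\varepsilon=u\ \text{on }\partial\Omega',
\]
for a fixed $\lambda>0$. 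The limit $v_\infty$ then solves $-\Delta_p^N v_\infty+\lambda v_\infty=f|Dv_\infty|^{-\gamma}+\lambda u$, and for this equation (with the strictly monotone zeroth-order term) uniqueness \emph{is} known, forcing $v_\infty=u$. Without this $\lambda$-trick your limit identification step does not go through. A secondary point: rather than invoking Theorem~\ref{th1} for the regularized problems, the paper establishes the needed uniform Lipschitz bound directly for $v_\varepsilon$ via Ishii--Lions (Appendix, Proposition~\ref{appendixkolmonen}); this is cleaner than arguing that the $C^{1,\alpha}$ machinery is uniform in $\varepsilon$.
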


The proof starts from the observation that when $\gamma\leq 0$, viscosity solutions of \eqref{genpl} are viscosity solutions of the  inhomogeneous normalized $p$-Laplace equation with a drift, $-\Delta^N_p u=f|D u|^{-\gamma}$, see Lemma \ref{reducsing}. This equation is singular but uniformly elliptic. The main idea is to regularize the equation (see equation \eqref{perti}) and use the \emph{Cordes condition}, see Theorem \ref{cordes}. This condition guarantees a uniform estimate for solutions of regularized problems, which together with equi-H\"older continuity and compactness argument gives a local $W^{2,2}$ estimate for solutions of equation \eqref{genpl}.

Obtaining integrability for second derivatives in the case $\gamma>0$ is harder because of the degeneracy of equation \eqref{genpl}. However, for a small $\gamma$ and $p$ close to two, we obtain local $W^{2,2}$ estimate by again using an approximation with uniformly elliptic regularized problems, and considering equation \eqref{genpl} as a perturbation of the $(\gamma+2)$-Laplacian.

\begin{theorem}\label{th3}
Assume that $f\in W^{1,1}(\Omega)\cap L^{\infty}(\Omega)\cap C(\Omega)$ and $\gamma\in (0,\beta]$, where $\beta\in (0,1)$ and
\begin{equation}
1-\beta-\sqrt n|p-2-\gamma|-\beta(p-2-\gamma)^+>0.
\end{equation}
Then any viscosity solution $u$ of \eqref{genpl} belongs to $W^{2,2}_{\text{\emph{loc}}}(\Omega)$, and  for any $\Omega''\subset \subset \Omega'\subset \subset \Omega$, we have
\begin{equation}
\norm{u}_{W^{2,2}(\Omega'')}\leq C(p,n,\gamma,\beta,d',d'',\norm{u}_{L^\infty(\Omega)},\norm{f}_{L^\infty(\Omega)},\norm{f}_{W^{1,1}(\Omega)}),
\end{equation}
where $d'=\diam(\Om')$ and $d''=\dist(\Om'', \partial\Om')$.
\end{theorem}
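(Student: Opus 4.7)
The strategy is the one indicated in the introduction: approximate \eqref{genpl} by a smooth uniformly elliptic equation, derive a uniform second-order estimate for the regularized solutions, and pass to the limit. The crucial new ingredient for handling the degeneracy $\gamma>0$ is the algebraic rewriting $|Du|^\gamma \Delta^N_p u = \Delta_{\gamma+2} u + (p-2-\gamma)|Du|^{\gamma-2}\langle D^2u\, Du,Du\rangle$, which displays \eqref{genpl} as a perturbation of the divergence-form $(\gamma+2)$-Laplacian whose error carries the small coefficient $|p-2-\gamma|$.

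Concretely, I would first solve the regularized problem $u^\eps\in C^{2,\alpha}_{\text{loc}}(\Omega')$,
\[
-(|Du^\eps|^2+\eps^2)^{\gamma/2}\Big(\Delta u^\eps+(p-2)\frac{\langle D^2u^\eps Du^\eps, Du^\eps\rangle}{|Du^\eps|^2+\eps^2}\Big)=f^\eps\ \ \text{in }\Omega',\qquad u^\eps|_{\partial\Omega'}=u,
\]
with $f^\eps\to f$ in $W^{1,1}\cap L^\infty$. Standard quasilinear theory produces $u^\eps$, and Theorem \ref{th1} applied to this equation with constants independent of $\eps$ gives a uniform local $C^{1,\alpha}$ bound. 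Using the identity $\divt\bigl((|Du|^2+\eps^2)^{\gamma/2}Du\bigr)=(|Du|^2+\eps^2)^{\gamma/2}\Delta u+\gamma(|Du|^2+\eps^2)^{\gamma/2-1}\langle D^2u\,Du,Du\rangle$, the regularized equation becomes
\[
-\divt\bigl((|Du^\eps|^2+\eps^2)^{\gamma/2}Du^\eps\bigr)=f^\eps-(p-2-\gamma)(|Du^\eps|^2+\eps^2)^{\gamma/2-1}\langle D^2u^\eps Du^\eps,Du^\eps\rangle.
\]
The key estimate is then obtained by differentiating this in $x_k$, testing against $\phi^2\partial_k u^\eps$ for a cutoff $\phi\in C^\infty_c(\Omega')$ with $\phi\equiv 1$ on $\Omega''$, and summing in $k$: the left-hand side produces (after discarding a nonnegative $\gamma$-contribution) a lower bound of the form $\int\phi^2(|Du^\eps|^2+\eps^2)^{\gamma/2}|D^2u^\eps|^2$; the $f^\eps$-term on the right, after moving $\partial_k$ onto $f^\eps$ using $f\in W^{1,1}$, is a harmless remainder controlled by $\|f\|_{W^{1,1}\cap L^\infty}$, $\|Du^\eps\|_{L^\infty}$ and $\|D\phi\|_\infty$; and the perturbation contributes, via integration by parts and Cauchy--Schwarz, a bound of the form
\[
\bigl(\sqrt n\,|p-2-\gamma|+\beta(p-2-\gamma)^+\bigr)\int\phi^2(|Du^\eps|^2+\eps^2)^{\gamma/2}|D^2u^\eps|^2 + \text{lower-order terms},
\]
where $\sqrt n$ arises from $|\Delta v|\leq\sqrt n|D^2 v|$ and $\beta$ bounds $\gamma$ in the derivative of the weight exponent $\gamma/2-1$ (the factor $(p-2-\gamma)^+$ appearing only when this derivative carries an unfavorable sign). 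The hypothesis $1-\beta-\sqrt n|p-2-\gamma|-\beta(p-2-\gamma)^+>0$ is exactly what permits absorption, yielding $\int\phi^2(|Du^\eps|^2+\eps^2)^{\gamma/2}|D^2u^\eps|^2\leq C$ uniformly in $\eps$; passing to the limit using the uniform $C^{1,\alpha}$ bound then produces $u\in W^{2,2}_{\text{loc}}(\Omega)$ with the asserted estimate.

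The main obstacle is the sharp accounting of constants in the energy inequality: the numerical factors $\sqrt n$ and $\beta(p-2-\gamma)^+$ must emerge from the computation in exactly the form appearing in the hypothesis, which requires tracking every contribution from differentiating the $(\gamma/2-1)$-weight and every tensor contraction such as $\langle D^2u\,Du,Du\rangle\leq|D^2u||Du|^2$. A secondary subtlety is the passage from the weighted Hessian bound to the unweighted $W^{2,2}$ conclusion: since $\gamma>0$ the weight $(|Du^\eps|^2+\eps^2)^{\gamma/2}$ can become arbitrarily small in the limit, and this transition relies on the full strength of the $C^{1,\alpha}$ gradient regularity from Theorem \ref{th1} together with a vanishing argument on the critical set $\{|Du|=0\}$.
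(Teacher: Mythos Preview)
Your overall architecture matches the paper's, but the energy computation has a genuine gap that prevents the argument from closing.

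\textbf{The missing weight in the test function.} You differentiate the divergence-form rewriting in $x_k$ and test against $\phi^2\partial_k u^\eps$. The paper instead tests against
\[
\eta^2\,(|Dv_\eps|^2+\eps^2)^{-\beta/2}\,\partial_s v_\eps,
\]
and the extra factor $H^{-\beta}$ with $H:=(|Dv_\eps|^2+\eps^2)^{1/2}$ is not cosmetic: it is the source of every occurrence of $\beta$ in the hypothesis. Differentiating the weight $H^{-\beta}$ on the left produces the coefficient $(1-\beta)$ in front of $\int\eta^2 H^{\gamma-\beta}|D^2v_\eps|^2$; the same weight, when hit by $\partial_s$ on the right (the perturbation term), yields the $\beta(p-2-\gamma)^+$ contribution. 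With your unweighted test function the left side carries coefficient $1$ and the right side only the $\sqrt n\,|p-2-\gamma|$ piece, so the condition you would actually derive is $1>\sqrt n\,|p-2-\gamma|$, not the stated one. Your account of $\beta$ as ``bounding $\gamma$ in the derivative of the weight exponent $\gamma/2-1$'' does not correspond to any term in the computation.

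\textbf{Why the weight is essential for the conclusion.} More seriously, your test function gives a Hessian bound with weight $H^{\gamma}$, exponent $\gamma>0$. This weight vanishes at critical points, so a uniform bound on $\int\phi^2 H^{\gamma}|D^2u^\eps|^2$ does \emph{not} control $\int\phi^2|D^2u^\eps|^2$: the Hessian may concentrate where $|Du^\eps|$ is small and the estimate says nothing there. Your proposed remedy---a ``vanishing argument on the critical set'' using $C^{1,\alpha}$ regularity---does not repair this; knowing that $\{Du=0\}$ is null gives no quantitative control of $|D^2u^\eps|$ near that set. The paper's weight solves this cleanly: since $\gamma\le\beta$, the resulting weight $H^{\gamma-\beta}$ has \emph{non-positive} exponent and is therefore bounded \emph{below} by a positive constant depending only on the uniform Lipschitz bound for $v_\eps$. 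The unweighted $L^2$ Hessian estimate then follows immediately, with no critical-set analysis.

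A minor further point: the paper adds the zero-order term $\lambda v_\eps$ (right-hand side $f_\eps+\lambda u$) precisely to guarantee uniqueness for the limit problem and hence that the limit of the approximants equals $u$. Your scheme omits this, so the identification $u^\eps\to u$ would still need justification.
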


This paper is organized as follows. In Section \ref{prel} we define viscosity solutions for the problem \eqref{genpl}.
In Section \ref{chapter3} we prove Theorem \ref{th1}, and in Section \ref{chapter4} Theorems \ref{th2} and \ref{th3}. \\

\noindent \textbf{Acknowledgment.} A.A. is supported by the Academy of Finland, project number 307870.

\section{Preliminaries and definitions of solutions}\label{prel}

In this section we define viscosity solutions of equation \eqref{genpl} and fix the notation.
For $\gamma>0$, the operator $|Du|^\gamma\left(\Delta u+(p-2)\Delta_\infty^N u\right)$  is continuous, and we can use the standard definition of viscosity solutions, see \cite{ishiilions}.
For $\gamma\leq 0$, the  operator  \eqref{genpl} is undefined when $D u=0$,
where  it has  a bounded discontinuity when $\gamma=0$ and is very singular when $\gamma<0$. This can be remediated adapting
the notion of viscosity solution using the upper and
lower semicontinuous envelopes (relaxations) of the operator, see \cite{crandall1992user}.
 The definition for viscosity solutions for the normalized $p$-Laplacian ($\gamma= 0$) is the following.
 
\begin{definition}
Let $\Om$ be a bounded domain and $1<p<\infty$. An upper semicontinuous function $u$ is  a viscosity subsolution of the equation $-\Delta^N_p u=f$
if for all $x_0\in\Om$  and $\phi\in C^2(\Om)$ such that $u-\phi$ attains a local maximum at $x_0$, one has
\begin{equation*}
\begin{cases}
-\Delta_p^N \phi(x_0)\leq f(x_0), &\text{if}\quad D \phi(x_0)\neq 0,\\
-\Delta\phi(x_0)-(p-2)\lambda_{max} (D^2\phi(x_0))\leq f(x_0),\, &\text{if}\ D \phi(x_0)=0\,\,\text{and}\,\, p\geq 2,\\
-\Delta\phi(x_0)-(p-2)\lambda_{min} (D^2\phi(x_0))\leq f(x_0),\, &\text{if}\ D \phi(x_0)=0\,\,\text{and}\,\, 1<p<2.
\end{cases}
\end{equation*}
 A lower semicontinuous function $u$ is a viscosity supersolution of \eqref{genpl}
if for all $x_0\in\Om$  and $\phi\in C^2(\Om)$ such that $u-\phi$ attains a local minimum at $x_0$, one has
\begin{equation*}
\begin{cases}
-\Delta_p^N \phi(x_0)\geq f(x_0), &\text{if}\quad D \phi(x_0)\neq 0,\\
-\Delta\phi(x_0)-(p-2)\lambda_{min} (D^2\phi(x_0))\geq f(x_0),\, &\text{if}\ D \phi(x_0)=0\,\text{and}\,\, p\geq 2,\\
-\Delta\phi(x_0)-(p-2)\lambda_{max} (D^2\phi(x_0))\geq f(x_0),\, &\text{if}\ D \phi(x_0)=0\,\text{and}\, 1<p<2.
\end{cases}
\end{equation*}
We say that $u$ is a viscosity solution of \eqref{genpl} in $\Om$ if it is both a viscosity sub- and
supersolution.
\end{definition}

The following definition for the case $\gamma<0$ is adapted from the definition used by Julin and Juutinen in \cite{julin2012new} for the singular $p$-Laplacian.
\begin{definition}\label{def1}
Let $\Om$ be a bounded domain, $1<p<\infty$, and $-1<\gamma< 0$. An upper semicontinuous function $u$ is  a viscosity subsolution of \eqref{genpl} if $u\not\equiv \infty$ and 
if for all $\phi\in C^2(\Om)$ such that $u-\phi$ attains a local maximum at $x_0$  and $D\phi (x)\neq 0$ for $x\neq x_0$, one has
\begin{equation}
\underset{r\to 0}{\lim}\,\underset{\substack{x\in B_r(x_0),\\ x\neq x_0}}{\inf}\, (-|D\phi(x)|^\gamma\Delta_p^N\phi(x))\leq f(x_0).
\end{equation}
 A lower semicontinuous function $u$ is a viscosity supersolution of \eqref{genpl}
if $u\not\equiv \infty$ and  for all $\phi\in C^2(\Om)$ such that $u-\phi$ attains a local minimum at $x_0$ and $D\phi (x)\neq 0$ for $x\neq x_0$, one has
\begin{equation}
\underset{r\to 0}{\lim}\, \underset{\substack {x\in B_r(x_0),\\ x\neq x_0}}{\sup}\,\left( -|D\phi(x)|^\gamma\Delta_p^N\phi(x)\right)\geq f(x_0).
\end{equation}
We say that $u$ is a viscosity solution of \eqref{genpl} in $\Om$ if it is both a viscosity sub- and
supersolution.
\end{definition}

Another definition of solutions was proposed by Birindelli and Demengel in \cite{birdem1,birdem3, birdem2}. This definition  of solution is an adaptation to the elliptic case of the notion of solution that was introduced by Chen, Giga
and Goto \cite{chen} and Evans and Spruck \cite{evas} for singular problems. This is a 
variation of the usual notion of viscosity solution for \eqref{genpl} that avoids  testing with  functions having vanishing gradient at the
testing point. 
\begin{definition}\label{def2}
Let $-1<\gamma< 0$ and $p>1$. A lower semicontinuous function $u$ is  a viscosity super-solution of \eqref{genpl} in $\Omega$, if for every $x_0\in \Omega$ one of the following conditions hold.
\begin{enumerate}[i)]
\item
Either for all $\phi \in C^2(\Omega)$ such that $u-\phi$ has a local minimum at $x_0$ and $D\phi(x_0)\neq 0$, we have
$$ -|D\phi(x_0)|^\gamma\Delta_p^N\phi(x_0)\geq f(x_0).$$
\item Or there is an open ball $B(x_0, \delta)\subset\Omega$, $\delta>0$  such that $u$ is constant in $B(x_0, \delta)$ and
$$0\geq f(x)\quad \text{for all}\ \  x\in B(x_0, \delta).$$
\end{enumerate}

An upper  semicontinuous function $u$ is a viscosity sub-solution  of \eqref{genpl} in $\Omega$ if for
all $x_0\in \Omega$ one of the following conditions hold.
\begin{enumerate}[i)]
\item Either for all $\phi\in C^2(\Omega)$ such that $u-\phi$ reaches a local maximum   at $x_0$ and $D\phi(x_0)\neq 0$, we have
\begin{equation}
-|D\phi(x)|^\gamma\Delta_p^N\phi(x))\leq f(x_0).
\end{equation}

\item
Either there exists an open ball $B(x_0, \delta)\subset\Omega$, $\delta>0$ such that $u$ is constant in $B(x_0, \delta)$ and 
\begin{equation}
0\leq f(x)\quad \text{for all}\ \ x\in B(x_0, \delta).
\end{equation}
\end{enumerate}
\end{definition}

\begin{proposition}
Definitions \ref{def1} and \ref{def2} are equivalent.
\end{proposition}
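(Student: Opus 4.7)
I consider supersolutions; the subsolution case is symmetric. The two definitions coincide trivially on test functions $\phi\in C^2$ with $D\phi(x_0)\neq 0$: since $D\phi$ is continuous near $x_0$, the map $x\mapsto-|D\phi(x)|^\gamma\Delta_p^N\phi(x)$ is continuous at $x_0$, so the $\limsup$--$\sup$ in Definition~\ref{def1} equals the pointwise value in Definition~\ref{def2}(i). The equivalence therefore reduces to handling test functions with $D\phi(x_0)=0$ (for Definition~\ref{def1}) and points where $u$ is locally constant (for Definition~\ref{def2}(ii)).

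For Definition~\ref{def1} $\Rightarrow$ Definition~\ref{def2}, only condition (ii) needs verification. Suppose $u\equiv c$ on $B(x_0,\delta)$ and pick $y\in B(x_0,\delta)$. Test $u$ at $y$ with $\phi(x):=c-|x-y|^q$, choosing $q$ large enough that $q(1+\gamma)>2$. One checks that $\phi\in C^2$, $D\phi(y)=0$, $D\phi(x)\neq 0$ for $x\neq y$, and $u-\phi=|x-y|^q$ has a strict local minimum at $y$. An explicit calculation gives
\[
-|D\phi(x)|^\gamma\Delta_p^N\phi(x)=C(n,p,q,\gamma)\,|x-y|^{q(1+\gamma)-2},\quad C>0,
\]
which tends to $0$ uniformly on $B_r(y)\setminus\{y\}$ as $r\to 0$. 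Definition~\ref{def1} applied at $y$ then forces $f(y)\leq 0$, proving (ii).

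For the converse direction, consider a test function $\phi$ as in Definition~\ref{def1} with $D\phi(x_0)=0$; after subtracting $|x-x_0|^4$ from $\phi$ we may assume the minimum of $u-\phi$ at $x_0$ is strict. If $u$ is constant in some $B(x_0,\delta)$, then $\phi\le c$ with equality at $x_0$ forces a local maximum of $\phi$ at $x_0$, so $D^2\phi(x_0)\leq 0$. Writing $\Delta_p^N\phi(x)=\tr\bigl(B(x)D^2\phi(x)\bigr)$ with $B(x)=I+(p-2)\hat v(x)\hat v(x)^T$ symmetric positive-definite and uniformly bounded in spectrum, a directional analysis of $\Delta_p^N\phi(x)$ as $x\to x_0$ (along the eigendirections of $D^2\phi(x_0)$) yields $\sup_{B_r(x_0)\setminus\{x_0\}}(-|D\phi|^\gamma\Delta_p^N\phi)\geq 0$ for all small $r$; combining with $f(x_0)\leq 0$ from Definition~\ref{def2}(ii) gives Definition~\ref{def1}. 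If $u$ is not locally constant at $x_0$, I perturb linearly by $\phi_\eta(x):=\phi(x)+\eta\cdot(x-x_0)$, so that $D\phi_\eta(x_0)=\eta\neq 0$; the strict-minimum property yields a local minimum $y_\eta$ of $u-\phi_\eta$ with $y_\eta\to x_0$ as $\eta\to 0$. For a generic small $\eta$, $D\phi_\eta(y_\eta)=D\phi(y_\eta)+\eta\neq 0$, and Definition~\ref{def2}(i) at $y_\eta$ gives
\[
-|D\phi_\eta(y_\eta)|^\gamma\Delta_p^N\phi_\eta(y_\eta)\geq f(y_\eta).
\]
Letting $\eta_k\to 0$ with $|\eta_k|\ll|D\phi(y_{\eta_k})|$, so that $D\phi_{\eta_k}(y_{\eta_k})$ becomes nearly parallel to $D\phi(y_{\eta_k})$, and using $D^2\phi_\eta=D^2\phi$, recovers Definition~\ref{def1} for $\phi$.

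The principal technical difficulty lies in this last step: $\eta$ must be chosen small enough that $y_\eta$ stays close to $x_0$ and the direction of $D\phi_\eta(y_\eta)$ matches that of $D\phi(y_\eta)$ in the limit, yet nondegenerate enough that $D\phi_\eta(y_\eta)\neq 0$ so Definition~\ref{def2}(i) applies. This is the analogue for our operator of the perturbation device introduced by Julin and Juutinen \cite{julin2012new} for the singular $p$-Laplacian.
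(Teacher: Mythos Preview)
The forward direction (Definition~\ref{def1} $\Rightarrow$ Definition~\ref{def2}) and the locally-constant sub-case of the converse follow the paper's line; your exponent $q(1+\gamma)-2$ should be $q(1+\gamma)-(\gamma+2)$, but since $\gamma<0$ your condition $q(1+\gamma)>2$ is stronger than the paper's $q>\frac{\gamma+2}{\gamma+1}$ and still suffices.

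The genuine gap is in the non-constant sub-case of the converse. Your linear perturbation $\phi_\eta(x)=\phi(x)+\eta\cdot(x-x_0)$ shifts the gradient to $D\phi+\eta$ while leaving $D^2\phi$ unchanged, so the viscosity inequality you obtain at $y_\eta$ reads
\[
-|D\phi(y_\eta)+\eta|^\gamma\Bigl[\Delta\phi(y_\eta)+(p-2)\Bigl\langle D^2\phi(y_\eta)\,\tfrac{D\phi(y_\eta)+\eta}{|D\phi(y_\eta)+\eta|},\tfrac{D\phi(y_\eta)+\eta}{|D\phi(y_\eta)+\eta|}\Bigr\rangle\Bigr]\ge f(y_\eta),
\]
which is \emph{not} the quantity $-|D\phi|^\gamma\Delta_p^N\phi$ evaluated at any point, and hence does not feed into the $\sup$ in Definition~\ref{def1}. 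To recover the latter you need $|\eta_k|/|D\phi(y_{\eta_k})|\to 0$ so the directions align; you assert this can be arranged but provide no mechanism. Since $y_\eta\to x_0$ and $D\phi(x_0)=0$, both $|\eta|$ and $|D\phi(y_\eta)|$ tend to zero with no a~priori control on their ratio, and non-constancy of $u$ does not by itself rule out the bad case. Nor is the ``generic $\eta$'' claim that $D\phi_\eta(y_\eta)\neq 0$ justified.

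The paper avoids all of this by using the \emph{translation} perturbation $\phi_y(x)=\phi(x+y)$ of \cite{dfqsing} rather than a linear one. Because $D\phi_y(x)=D\phi(x+y)$ and $D^2\phi_y(x)=D^2\phi(x+y)$, the viscosity inequality at the minimum point $x_y$ is exactly $-|D\phi(z)|^\gamma\Delta_p^N\phi(z)\ge f(x_y)$ with $z=x_y+y$, so no direction-matching is needed and the $\sup$ in Definition~\ref{def1} is bounded below immediately. Furthermore, if $D\phi_y(x_y)=0$ for all small $y$, the isolated-critical-point hypothesis forces $x_y+y=x_0$, i.e.\ $x_y=x_0-y$; substituting this into the minimum inequality and differentiating in the translation parameter then shows $u$ is constant near $x_0$, a contradiction. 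This clean dichotomy is precisely what your linear perturbation lacks.
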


\begin{proof}
We show first that if $u$ is a supersolution of \eqref{genpl} in the sense of Definition \ref{def1}, then it is a supersolution in the sense of  Definition \ref{def2}. The case $i)$ is immediate, so we only need to consider the case where $u$ is constant in a ball $B(x_0, \delta)$. We see that for all $y\in B(x_0, \delta)$, the function $\phi(x)=u(y) 
-|x-y|^q$ with $q>\dfrac{\gamma+2}{\gamma+1}>2$ is a smooth test function with an isolated critical point, and it holds 
$$\underset{r\to 0}{\lim}\underset{\substack{z\in B_r(y),\\ z\neq y}}{\sup}\, -|D\phi(z)|^\gamma\Delta_p^N\phi(z)=0,$$
so that $f\leq 0$ in $B(x_0, \delta)$, and the statement follows.

Now we assume that $u$ is a supersolution in the sense of Definition \ref{def2}. Let $x_0\in \Omega$ and $\phi$ an admissible test function such that $u-\phi$ has a local minimum at $x_0$.  We assume without a loss of generality, that $x_0$ is a strict local
minimum in $B(x_0, \eta)$. If $D\phi(x_0)\neq 0$, then the conclusion is immediate. If $D \phi(x_0)=0$ and $\phi $ has only one critical point in a neighborhood  $B(x_0, r_0)$ of $x_0$, we have to consider two cases: either $u$ is constant or not.  If $u$ is constant in a small ball around $x_0$, then $u$ is smooth and it follows that $D^2\phi (x_0)\leq 0$. Using that
$$-\Delta_p^N\phi(x)\geq -\Delta \phi(x)-(p-2)\lambda_{\max}(D^2\phi(x))\qquad\textrm{if}\ \  p\geq 2, $$
$$-\Delta_p^N\phi(x)\geq -\Delta \phi(x)-(p-2)\lambda_{\min}(D^2\phi(x))\qquad\textrm{if}\ \ p< 2, $$
we get that for $p\geq 2$
$$\underset{r\to 0}{\lim}\underset{\substack{x\in B_r(x_0),\\ x\neq x_0}}{\sup}\, -\Delta_p^N\phi(x)\geq 
\underset{r\to 0}{\lim}\underset{\substack{x\in B_r(x_0),\\ x\neq x_0}}{\sup}\, -\Delta\phi(x)-(p-2)\lambda_{\max}(D^2 \phi(x))\geq 0,$$
and for $1<p<2$
$$\underset{r\to 0}{\lim}\underset{\substack{x\in B_r(x_0),\\ x\neq x_0}}{\sup}\, -\Delta_p^N\phi(x)\geq 
\underset{r\to 0}{\lim}\underset{\substack{x\in B_r(x_0),\\ x\neq x_0}}{\sup}\, -\Delta\phi(x)-(p-2)\lambda_{\min}(D^2 \phi(x))\geq 0,$$
where we used that $$-\Delta \phi(x_0)-(p-2)\lambda_{\max}(D^2\phi(x_0))\geq 0$$  and 
$$-\Delta \phi(x_0)-(p-2)\lambda_{\min}(D^2\phi(x_0))\geq 0.$$
It follows that
$$\underset{r\to 0}{\lim}\underset{\substack{x\in B_r(x_0),\\ x\neq x_0}}{\sup}\, -|D\phi(x)|^\gamma\Delta_p^N\phi(x)\geq 0.$$
Since we have $f\leq 0$  at $x_0$, it follows that 
$$\underset{r\to 0}{\lim}\underset{\substack{x\in B_r(x_0),\\ x\neq x_0}}{\sup}\, -|D\phi(x)|^\gamma\Delta_p^N\phi(x)\geq f(x_0).$$
 
 Next we consider the case where $u$ is not constant in any ball around  $x_0$. We use the argument  of \cite[Lemma 2.1]{dfqsing}. For $y\in B(0, r)$ where $r>0$ is small enough, we consider the function $\phi_y(x)= \phi(x+y)$. Then we have that $u-\phi_y$ reaches a local minimum at some point $x_y$ in  $B(x_0, \eta)$.
We can show that there exists a sequence $y_k\rightarrow 0$ such that $D\phi_{y_k}(x_{y_k})\neq 0$ for all $k$. Testing the equation at $x_{y_k}$ we get that for $k$ large enough
$$\underset{\substack{z\in B_r(x_0), \\ z\neq x_0}}{\sup}\, -|D\phi(z)|^\gamma\Delta_p^N\phi(z)\geq -|D\phi_{y_k}(x_{y_k})|^\gamma\Delta_p^N\phi_{y_k}(x_{y_k})\geq f(x_{y_k}).$$
Letting $r\to 0$ and $k\to \infty$, we get that 
$$\underset{r\to 0}{\lim}\underset{\substack{x\in B_r(x_0),\\ x\neq x_0}}{\sup}\, -|D\phi(x)|^\gamma\Delta_p^N\phi(x)\geq f(x_0).$$
If such a sequence $(y_k)$ does not exist, then it follows by the property that $x_0$ is the only critical point of $\phi$ in a small neighborhood of $x_0$, that  for  all $y\in B(0, r)$, we must have $x_y + y = x_0$  and 
also that for all $y\in B(0,r)$ and $x\in B(x_0, \eta)$ we have
\begin{equation} 
 u(x_0-y)-\phi(x_0)=u(x_y)-\phi_y(x_y)\leq u(x)-\phi_y(x)=u(x)-\phi(x+y).
 \end{equation}
 That is, 
 \begin{equation}\label{contragin}
 u(x_0-y)-u(x)\leq\phi(x_0)-\phi(x+y).
 \end{equation}
 
For a vector $z\in B(0,r)$ and a unit vector $e_i$, taking $x=x_0+ z$   and $y=-z-t e_i$ (resp. $x=x_0+z+t e_i$ and $y=-z$) in \eqref{contragin}, dividing by $t>0$ and taking $\limsup$ (resp. $\liminf$),
we see that the directional derivatives of $u$ near $x_0$ vanish
everywhere in the neighborhood. This implies that $u$ is constant around
$x_0$, which is a contradiction.
\end{proof}

We can see from the proof that the three definitions are  also equivalent when $\gamma=0$.
When the operator is continuous ($\gamma>0$),  then Definitions  \ref{def1} and  \ref{def2} are  equivalent with
the usual definition of viscosity solution (see \cite[Lemma 2.1]{dfqsing}).

Next we observe that the singular case can be reduced to the study of the normalized $p$-Laplacian problem with a lower order term.
\begin{lemma}\label{reducsing}
Let  $\gamma \in (-1, 0)$. Assume that $u$ is a viscosity solution to \eqref{genpl}. Then $u$ is  a viscosity solution to 
\begin{equation}
-\Delta_p^N u=f|Du|^{-\gamma}.
\end{equation}
\end{lemma}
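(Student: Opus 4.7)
The plan is to transfer the viscosity supersolution property from the original equation to $-\Delta_p^N u = f|Du|^{-\gamma}$, the subsolution case being handled analogously. Since $-\gamma > 0$, the map $q \mapsto f(x)|q|^{-\gamma}$ is continuous and vanishes at $q=0$, so the new equation inherits only the bounded discontinuity of the normalized $p$-Laplacian at $q=0$, and the standard $\gamma=0$ viscosity definition applies (with the usual $\lambda_{min}/\lambda_{max}$ envelope tests when the test gradient vanishes). Fix $\phi\in C^2(\Omega)$ such that $u-\phi$ attains a strict local minimum at $x_0$.

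If $D\phi(x_0)\neq 0$, case (i) of Definition \ref{def2} applied to $u$ and $\phi$ gives $-|D\phi(x_0)|^{\gamma}\Delta_p^N\phi(x_0)\geq f(x_0)$, and multiplying by $|D\phi(x_0)|^{-\gamma}>0$ yields the required inequality $-\Delta_p^N\phi(x_0)\geq f(x_0)|D\phi(x_0)|^{-\gamma}$. When $D\phi(x_0)=0$, I would use the equivalent Definition \ref{def2} for $u$ itself and split into two subcases. If $u$ is constant in some $B(x_0,\delta)$, then $\phi$ attains a local maximum at $x_0$ in this ball, so $D^2\phi(x_0)\leq 0$; a direct computation using $p>1$ and $n\geq 2$ shows that the envelope inequality $-\Delta\phi(x_0)-(p-2)\lambda_{min}(D^2\phi(x_0))\geq 0$ holds for $p\geq 2$ and $-\Delta\phi(x_0)-(p-2)\lambda_{max}(D^2\phi(x_0))\geq 0$ for $1<p<2$ (in the second case one uses $\sum_i \lambda_i\leq n\lambda_{max}\leq 0$). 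Otherwise, $u$ is not constant in any neighborhood of $x_0$, and I would reuse the perturbation device from the proof of the preceding proposition: for $\phi_y(x)=\phi(x+y)$ one extracts $y_k\to 0$ and local minima $x_{y_k}\to x_0$ of $u-\phi_{y_k}$ with $D\phi(x_{y_k}+y_k)\neq 0$. Applying case (i) of Definition \ref{def2} at $x_{y_k}$ and rearranging yields $-\Delta_p^N\phi_{y_k}(x_{y_k})\geq f(x_{y_k})|D\phi(x_{y_k}+y_k)|^{-\gamma}$. Along a further subsequence one may assume $D\phi(x_{y_k}+y_k)/|D\phi(x_{y_k}+y_k)|\to\nu$ for some unit vector $\nu$. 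Since $-\gamma>0$ and $D\phi(x_0)=0$, the right-hand side tends to $0$, while the left-hand side converges to $-\Delta\phi(x_0)-(p-2)\langle D^2\phi(x_0)\nu,\nu\rangle\geq 0$. Because $\lambda_{min}(D^2\phi(x_0))\leq\langle D^2\phi(x_0)\nu,\nu\rangle\leq\lambda_{max}(D^2\phi(x_0))$, this direction-specific inequality implies the envelope inequality in both ranges of $p$.

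The main obstacle is exactly the vanishing-gradient case: the original operator is genuinely singular there while the new one has only a bounded discontinuity, so the equivalence cannot be read off by a simple division. The resolution combines two ingredients already available in this section, namely the perturbation argument from the proof of the preceding proposition and the order-theoretic fact that the envelope conditions defining viscosity solutions of $-\Delta_p^N u = g$ at $D\phi=0$ are monotonically weaker than the direction-specific condition obtained in the limit. The subsolution statement is proved identically, with $\lambda_{min}$ and $\lambda_{max}$ swapped and all inequalities reversed.
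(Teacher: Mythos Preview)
Your overall strategy---divide by $|D\phi(x_0)|^{\gamma}$ when the test gradient is nonzero and verify the envelope inequality separately when $D\phi(x_0)=0$---matches the paper's, but the vanishing-gradient case has a gap. In your non-constant branch you invoke ``the perturbation device from the proof of the preceding proposition'' to extract $y_k\to 0$ and minima $x_{y_k}$ with $D\phi(x_{y_k}+y_k)\neq 0$. That device, however, relied on the hypothesis built into Definition~\ref{def1} that $x_0$ is an \emph{isolated} critical point of $\phi$: it is precisely this that forces $x_y+y=x_0$ whenever the gradient vanishes at the perturbed minimum, from which one then deduces that $u$ must be constant. A generic $C^2$ test function for the normalized $p$-Laplacian carries no such restriction, so the dichotomy breaks down and the extraction of $(y_k)$ is not justified as written.

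The paper avoids this by splitting instead on whether $D^2\phi(x_0)$ is invertible. If it is, $x_0$ \emph{is} automatically an isolated critical point, so $\phi$ is admissible in Definition~\ref{def1}; the $\limsup$ condition then yields $x_r\to x_0$ with $D\phi(x_r)\neq 0$ and $-|D\phi(x_r)|^\gamma\Delta_p^N\phi(x_r)\geq f(x_0)-\varepsilon$, and after dividing, bounding $-\Delta_p^N\phi(x_r)$ below by the relevant Pucci-type quantity, and letting $r,\varepsilon\to 0$ one obtains the envelope inequality with right-hand side $0=f(x_0)|D\phi(x_0)|^{-\gamma}$. If $D^2\phi(x_0)$ is singular, one replaces $\phi$ by $\phi_\eta(x)=\phi(x)-\eta\,(x-x_0)\cdot B(x-x_0)$ with $B\geq 0$ chosen so that $D^2\phi_\eta(x_0)$ is invertible (note $\phi_\eta\leq\phi$ still touches $u$ from below and $D\phi_\eta(x_0)=0$), applies the previous step, and sends $\eta\to 0$. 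Your argument can be repaired by inserting exactly this invertibility reduction before running the non-constant branch.
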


\begin{proof}
It is sufficient to prove the supersolution property since
the sub-solution property is similar. 
We give the proof for $p>2$ the other case being analogous. 
Let $\phi$ be a test function touching $u$ strictly from below at $x_0$. 
The case $D\phi(x_0)\neq 0$ is immediately clear, so we focus on the case $D\phi(x_0)=0$. 

We distinguish two cases: either $D^2 \phi(x_0)$ is invertible or not. If it is, then $x_0$ is the only critical point of $\phi$ in a small ball around $x_0$. In this case using  the definition of $x_0$, we have that for all $\eps>0$ there exist $r_0$ such for $r\leq r_0$ there exist $x_r\to x_0$ such that $D\phi(x_r)\neq 0$ and 
$$-|D\phi(x_r)|^\gamma\Delta_p^N \phi(x_r)\geq f(x_0)-\eps,$$
hence
$$-\Delta_p^N\phi(x_r)\geq (f(x_0)-\eps)|D\phi(x_r)|^{-\gamma}.$$

It follows that 
$$-\Delta \phi(x_r)-(p-2)\lambda_{\min} (D^2\phi(x_r))\geq (f(x_0)-\eps)|D\phi(x_r)|^{-\gamma}.$$
Letting $\eps,r \to 0$, we get 
 $$-\Delta \phi(x_0)-(p-2)\lambda_{\min} (D^2\phi(x_0))\geq 0=f(x_0)|D\phi(x_0)|^{-\gamma}.$$

Now we suppose that $D^2\phi(x_0)$ is not invertible. Then we take  a symmetric matrix $B$, semi-positive definite
such that $D^2 \phi(x_0)-\eta B$ is invertible, for all $\eta >0$. We consider the test function 
$$\phi_\eta (x)=\phi(x)-\eta (x-x_0)B(x-x_0).$$
Applying the previous argument we have 
$$-\Delta \phi_\eta(x_0)-(p-2)\lambda_{\min} (D^2(\phi_\eta(x_0))\geq 0=f(x_0)|D\phi_\eta(x_0)|^{-\gamma}.$$
Passing to the limit $\eta\to 0$ we get the desired result.
\end{proof}

The following lemma will be useful in the next section concerning the proof of the improvement of flatness.
\begin{lemma}\label{limitequequiv}
Let $\gamma>-1$ and $p>1$. Assume that $w$ is a viscosity solution of 
$$-|Dw+q|^\gamma\left[\Delta w+(p-2) \dfrac{\langle D^2 w (Dw+q), (Dw+q)\rangle}{|Dw+q|^2}\right] =0.$$
Then $w$ is a viscosity solution of 
$$-\Delta w-(p-2)\dfrac{\langle D^2 w (Dw+q), (Dw+q)\rangle}{|Dw+q|^2}=0.$$
\end{lemma}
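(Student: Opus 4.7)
The plan is to prove the supersolution property of $w$ for the second equation; the subsolution property follows by reversing all inequalities. I would work within the Birindelli--Demengel formulation (Definition \ref{def2}), which the preceding proposition guarantees is equivalent to Definition \ref{def1} for the first equation. Under Definition \ref{def2}, the supersolution property at a point $x_0$ is a dichotomy: either (i) for every admissible test function $\phi$ with $w - \phi$ attaining a local minimum at $x_0$ and with $D\phi(x_0) + q \neq 0$, the pointwise inequality at $x_0$ holds, or (ii) $w$ is constant on some ball $B(x_0, \delta) \subset \Omega$ and $0 \geq f$ holds on that ball. Because both equations in the lemma have vanishing right-hand side, alternative (ii) transfers immediately from the first equation to the second.

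The main step is to handle alternative (i). Whenever $D\phi(x_0) + q \neq 0$, the factor $|D\phi(x_0)+q|^\gamma$ is strictly positive, so the supersolution inequality for the first equation,
$$
-|D\phi(x_0)+q|^\gamma \left[\Delta\phi(x_0) + (p-2)\frac{\langle D^2\phi(x_0)(D\phi(x_0)+q), (D\phi(x_0)+q)\rangle}{|D\phi(x_0)+q|^2}\right] \geq 0,
$$
is equivalent, after dividing by this positive factor, to the desired supersolution inequality for the second equation at $x_0$. This step is purely algebraic and requires no approximation.

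The only genuine subtlety is verifying that the equivalence between Definitions \ref{def1} and \ref{def2} applies to the second equation as well, since the preceding proposition formally concerns only the first. The operator in the second equation has a bounded jump discontinuity at $Dw + q = 0$ (rather than the true singularity that arises for $\gamma < 0$), so the perturbation argument used in that proposition applies with essentially no change; this is the well-known equivalence of viscosity formulations for the normalized $p$-Laplacian. Consequently, any test function with $D\phi(x_0) + q = 0$ is reduced to the non-degenerate regime already handled, and the argument is complete.
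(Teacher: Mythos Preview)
Your proposal is correct and follows essentially the same route as the paper: once one restricts attention to test functions with $D\phi(x_0)+q\neq 0$, the conclusion comes from dividing by the strictly positive factor $|D\phi(x_0)+q|^\gamma$, and the residual case is handled by the known fact that for the homogeneous normalized $p$-Laplacian it suffices to test with nonvanishing gradient. The only cosmetic difference is that the paper first substitutes $v=w-q\cdot x$ to reduce to $q=0$ and then cites \cite{juutinenlm01,kamamik2012} directly, whereas you keep $q$ general and phrase the reduction through the dichotomy of Definition~\ref{def2}.
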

\begin{proof}
We can reduce the problem to the case $q=0$ since $v=w-q\cdot x$ solves $-|Dv|^\gamma\Delta_p^Nv=0$.
It is sufficient to prove the super-solution property since
the sub-solution property is similar. 
Let $\phi$ be a test function touching $u$ strictly from below at $x_0$. 
We use the fact that for the homogeneous case and for $1<p<\infty$, it is enough to test with functions such that $D\phi(x_0)\neq 0$ (see \cite{juutinenlm01,kamamik2012}) and the statement follows.
\end{proof}

\section{Local H\"older regularity of the gradient}\label{chapter3}
In this section we give a  proof for Theorem \ref{th1}. We assume that $\gamma\in  (-1, \infty)$, $p>1$ and $f\in L^{\infty}(\Omega)\cap C(\Omega)$,
and we want to show that there exists $\alpha=\alpha(p,n, \gamma)>0$ such that any viscosity solution $u$ of
  \eqref{genpl} is in $C^{1,\alpha}_{\text{loc}}(\Omega)$, and for any $\Omega' \subset \subset \Omega$,
$$
[u]_{C^{1,\alpha}(\Omega')} \le C=C \left(p,n,\gamma,d,d',||u||_{L^\infty(\Omega)},\norm{f}_{L^\infty(\Omega)} \right),
$$
where $d=\diam(\Om)$ and $d'=\dist(\Om', \partial \Om)$. \\

A standard method  to investigate the  regularity of solutions is through their approximations by linear functions.
The goal is to get good estimates of the error of these approximations using a compactness method and the scaling properties of the equation.
 We will use the following characterization of $C^{1,\alpha}$ functions: there exists a positive constant $C$ such that for any $x\in \Omega$ and $r>0$, there exists a vector $l$ for which  
 \begin{equation}\label{mainstep}
\underset{y\in B_r(x)}{\osc} (u(y)-u(x)-l\cdot (x-y))\leq Cr^{1+\a}.
\end{equation}

By rescaling we may assume $\osc{u}\leq 1$, and by proceeding by iteration it is enough to find $\rho\in(0,1)$ such that inequality \eqref{mainstep} holds true for $r=r_k=\rho^k$,  $l=l_k$  and $C=1$. The balls $B_r(x)$ for $x\in \Omega$ and $r<\text{dist}\, (x,\partial \Omega)$ covering the domain $\Omega$, we may work on balls. 
Moreover, by using a translation argument and the following scaling, 
\[
u_r(y)=r^{-\frac{\gamma+2}{\gamma+1}}u(x+ry),
\]
we may work on the unit ball $B_1(0)$ and prove the regularity only at the origin. Considering $u-u(0)$ if necessary, we may suppose that $u(0)=0$. 

We also reduce the problem by rescaling. Let $$\kappa=(2||u||_{L^\infty(B_1)}+(\eps_0^{-1}||f||_{L^\infty(B_1)})^{1/(1+\gamma)})^{-1}.$$ Setting $\tilde{u}=\kappa u$, then $\tilde{u}$ satisfies
\begin{equation*}
-|D\tilde u|^\gamma \Delta_p^N\left(\tilde{u}\right)=\tilde{f}
\end{equation*}
with $$||\tilde{u}||_{L^\infty(B_1)}\leq \frac{1}{2}\qquad\textrm{and}\quad ||\tilde{f}||_{L^\infty(B_1)}\le \eps_0.$$ 

Hence, without loss of generality we may assume that $||u||_{L^\infty(B_1)}\leq 1/2$ and $||f||_{L^\infty(B_1)}\leq \eps_0$, where $\eps_0=\eps_0(p,n, \gamma)$ is chosen later.
 To prove the inductive step, the strategy is  to study the deviations of  $u$ from planes, $w(x)=u(x)-q\cdot x$,  which satisfy in $B_1$
 \begin{equation}\label{devia}
-|Dw+q|^\gamma\left[\Delta w+(p-2) \left\langle D^2w\frac{D w+q}{\abs{D w+q}}, \frac{D w+q}{\abs{D w+q}}\right\rangle\right]=f
\end{equation}
in the viscosity sense. The existence of the vector  $q_{k+1}$ can be reduced to prove an ''improvement of flatness'' for  solutions of \eqref{devia}.  The proof of the improvement of flatness consists of two steps. First we show equicontinuity for uniformly bounded solutions of \eqref{devia} in Lemma \ref{lip1} and Lemma \ref{lipe2}.
Next, by the Arzel\`a-Ascoli theorem we get compactness, which, together with the regularity of the limiting solutions (Lemma \ref{lip2}), allow to show improvement of flatness for solutions of \eqref{devia} in Lemma \ref{flatle} via a contradiction argument. Finally, we prove $C^{1,\alpha}$ regularity for solutions of \eqref{genpl} in Lemma \ref{lemiter} by using Lemma \ref{flatle} and iteration.

\subsection{Equicontinuity for deviations from planes}
First, we need to prove some compactness result for the deviations from planes. We will provide H\"older regularity results  independently of $q$ using different arguments for large and small slopes. 
In the next lemmas we use the following notation for \emph{Pucci operators}:
\[
\mathcal{M}^+(X):=\sup_{A\in \mathcal{A}_{\lambda,\Lambda}} -\tr(AX)
\]
and
\[
\mathcal{M}^-(X):=\inf_{A\in \mathcal{A}_{\lambda,\Lambda}} -\tr(AX),
\]
where $\mathcal{A}_{\lambda,\Lambda}\subset S^n$ is a set of symmetric $n\times n$ matrices whose eigenvalues belong to $[\lambda,\Lambda]$.

 \begin{lemma}\label{lip1}
Let $\gamma\in (-1, 0]$ and $p\in (1, \infty)$. 	Assume that $|q|^{-\gamma}\norm{f}_{L^\infty(B_1)}\leq a_0(p,n, \gamma)$. Let $w$ be a viscosity solution of \textcolor{blue}{\eqref{devia}} in $B_1$. For all $r\in (0,\frac 34)$ there exist a constant $\beta=\beta(p,n, \gamma)\in (0,1)$, and a constant $C=C(p,n,\gamma)>0$ such that   for all  $x,y\in B_r$,
 	\begin{equation}
 	\begin{split}
 	\abs{ w(x)- w(y)}\le C\left(1+\norm{ w}_{L^\infty(B_1)}+\norm{f}_{L^\infty(B_1)}^{\frac{1}{1+\gamma}}\right) \abs{x-y}^\beta.
 	\end{split}
 	\end{equation}
\end{lemma}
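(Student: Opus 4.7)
The plan is to combine the reduction from Lemma \ref{reducsing} with the Ishii-Lions doubling-of-variables technique, and to handle the $q$-dependence by a dichotomy on the size of the auxiliary gradient compared with $|q|$.

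Since $\gamma\in(-1,0]$, the proof of Lemma \ref{reducsing} adapted to $w=u-q\cdot x$ shows that $w$ is a viscosity solution of the uniformly elliptic equation
$$-\Delta w-(p-2)\left\langle D^{2}w\,\hat n,\hat n\right\rangle=f\,|Dw+q|^{-\gamma},\qquad \hat n=\frac{Dw+q}{|Dw+q|},$$
whose ellipticity constants depend only on $p$, and whose source is only mildly singular in the gradient since $-\gamma\in[0,1)$. Fix $r\in(0,3/4)$, pick $x_0,y_0\in B_r$, and set $\tilde r=(3+r)/4<1$. I would introduce the doubling auxiliary function
$$\Phi(x,y)=w(x)-w(y)-L_1\phi(|x-y|)-L_2\bigl(|x-x_0|^{2}+|y-x_0|^{2}\bigr),\qquad \phi(t)=t^{\beta},$$
on $\overline B_{\tilde r}\times \overline B_{\tilde r}$, with $\beta\in(0,1)$ to be chosen and $L_2=(4\|w\|_{L^\infty(B_1)}+1)/(\tilde r-r)^{2}$, so that any positive maximum of $\Phi$ is attained at an interior point $(\bar x,\bar y)$.

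Arguing by contradiction, suppose $\sup\Phi>0$, so that necessarily $\bar x\neq\bar y$ and $|\bar x-\bar y|\leq (2\|w\|_{L^\infty(B_1)}/L_1)^{1/\beta}$, which becomes arbitrarily small as $L_1\to\infty$. The Crandall-Ishii maximum principle produces jets $(\zeta_x,X)\in\overline J^{2,+}w(\bar x)$ and $(\zeta_y,Y)\in\overline J^{2,-}w(\bar y)$ with
$$\zeta_x=L_1\phi'(|\bar x-\bar y|)\omega+2L_2(\bar x-x_0),\quad \zeta_y=L_1\phi'(|\bar x-\bar y|)\omega-2L_2(\bar y-x_0),$$
$\omega=(\bar x-\bar y)/|\bar x-\bar y|$, together with the standard matrix inequality showing that $X-Y$ has an eigenvalue at most $-c(p,n)L_1|\phi''(|\bar x-\bar y|)|$ in direction $\omega$ and the remaining eigenvalues of size $O(L_2)$. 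Since $\zeta_x+q$ and $\zeta_y+q$ differ by only $O(L_2 r)$, their normalizations $\hat n_x,\hat n_y$ are nearly equal; plugging both jets into the viscosity sub- and supersolution inequalities and subtracting yields the key estimate
$$c_1(p,n)\,L_1|\phi''(|\bar x-\bar y|)|\leq \|f\|_{L^\infty(B_1)}\bigl(|\zeta_x+q|^{-\gamma}+|\zeta_y+q|^{-\gamma}\bigr)+CL_2.$$

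The principal obstacle is bounding the right-hand side uniformly in $q$, and this is exactly where the hypothesis $|q|^{-\gamma}\|f\|_{L^\infty(B_1)}\leq a_0$ enters, via a dichotomy on the size of $L_1\phi'(|\bar x-\bar y|)$ relative to $|q|+L_2 r$. If $L_1\phi'(|\bar x-\bar y|)\geq 2(|q|+2L_2 r)$, then $|\zeta_\bullet+q|$ is comparable to $L_1\phi'(|\bar x-\bar y|)$, so the right-hand side is at most $C\|f\|_{L^\infty(B_1)}L_1^{-\gamma}(\phi'(|\bar x-\bar y|))^{-\gamma}+CL_2$; comparing with the left-hand side gives a ratio
$$\frac{L_1|\phi''(t)|}{L_1^{-\gamma}(\phi'(t))^{-\gamma}}\sim L_1^{1+\gamma}\,t^{\,\beta(1+\gamma)-(2+\gamma)},$$
and since $\beta(1+\gamma)-(2+\gamma)<-1$ for every $\beta\in(0,1)$ and $\gamma\in(-1,0]$, choosing $L_1$ large enough (while $t=|\bar x-\bar y|$ is automatically small) produces the contradiction. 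In the complementary regime $L_1\phi'(|\bar x-\bar y|)<2(|q|+2L_2 r)$, one has $|\zeta_\bullet+q|\leq 3(|q|+2L_2 r)$, and by subadditivity of $s\mapsto s^{-\gamma}$,
$$\|f\|_{L^\infty(B_1)}|\zeta_\bullet+q|^{-\gamma}\leq C\bigl(|q|^{-\gamma}\|f\|_{L^\infty(B_1)}+(L_2 r)^{-\gamma}\|f\|_{L^\infty(B_1)}\bigr)\leq C(a_0,L_2,r,\|f\|_{L^\infty(B_1)}),$$
a bound independent of $q$, so again $L_1|\phi''|\sim L_1 t^{\beta-2}$ defeats the right-hand side once $L_1$ is large. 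Tracking the thresholds one sees that $L_1=C(1+\|w\|_{L^\infty(B_1)}+\|f\|_{L^\infty(B_1)}^{1/(1+\gamma)})$ suffices for any fixed $\beta\in(0,1)$, whence $\Phi\leq 0$ on $\overline B_{\tilde r}\times \overline B_{\tilde r}$ and the claimed H\"older estimate follows by evaluating $\Phi(x_0,y_0)\leq 0$. The ABP estimate of the companion Lemma \ref{lipe2} is reserved for the complementary regime $|q|^{-\gamma}\|f\|_{L^\infty(B_1)}>a_0$.
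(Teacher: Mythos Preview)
Your proposal is essentially correct, but it takes a substantially more laborious route than the paper. The paper's proof is a three-line reduction: since $-\gamma\in[0,1)$, subadditivity gives $|Dw+q|^{-\gamma}|f|\le C(|Dw|^{-\gamma}|f|+|q|^{-\gamma}|f|)$, and Young's inequality with exponents $-1/\gamma$ and $1/(1+\gamma)$ yields $|Dw|^{-\gamma}|f|\le |Dw|+C|f|^{1/(1+\gamma)}$. Hence $w$ satisfies $\mathcal{M}^{\pm}(D^2w)\pm|Dw|\pm|g|\gtrless 0$ with $|g|\le C(a_0+\|f\|^{1/(1+\gamma)})$ bounded independently of $q$, and the H\"older estimate follows immediately from Krylov--Safonov theory for uniformly elliptic equations with linear gradient growth. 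You instead run the full Ishii--Lions doubling argument and split into two cases according to whether $L_1\phi'$ dominates $|q|$; this is self-contained but considerably longer, and the Young-inequality trick is precisely what lets the paper avoid your dichotomy altogether.

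Two small issues worth flagging. First, your justification that ``$\hat n_x,\hat n_y$ are nearly equal'' is not valid in your Case~2, where $|\zeta_\bullet+q|$ need not be bounded below; the clean way to obtain your key estimate is to pass through the Pucci extremal inequalities $\mathcal{M}^-(D^2w)\le f|Dw+q|^{-\gamma}\le\mathcal{M}^+(D^2w)$, which removes the direction $\hat n$ entirely and hence the cross term $\operatorname{tr}((A(\hat n_x)-A(\hat n_y))Y)$. Second, your closing sentence is misplaced: Lemma~\ref{lipe2} concerns the range $\gamma>0$ and is not the complementary tool for large $|q|^{-\gamma}\|f\|$ here; the assumption $|q|^{-\gamma}\|f\|\le a_0$ is simply a standing hypothesis of the present lemma, later verified along the iteration in Lemma~\ref{lemiter}.
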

\begin{proof}
For $-1<\gamma\leq0$, we observe that 
\begin{align*}|Dw+q|^{-\gamma}f&\leq C|f||Dw|^{-\gamma}+C|q|^{-\gamma}|f|\\
&\leq |Dw|+C|q|^{-\gamma}|f|+C|f|^{\frac{1}{1+\gamma}}.
\end{align*}
 It follows that $w$ satisfies
 $$\left\{ \begin{array}{ll}\mathcal{M}^+(D^2w)+|Dw|+|g|\geq 0\\
\mathcal{M}^-(D^2w)-|Dw|-|g|\leq 0\end{array}\right.$$
 where $|g(x)|:=C(|f(x)||q|^{-\gamma}+|f(x)|^{\frac{1}{\gamma+1}})$ is a bounded function  (uniformly with respect to $|q|$)  satisfying $\norm{g}_{L^\infty(B_1)}\leq C( a_0+\norm{f}_{L^\infty(B_1)}^{\frac{1}{\gamma+1}})$. Then the Hölder estimate for $w$ follows from  the classical theory of uniformly elliptic equations.
\end{proof}

\begin{lemma}\label{prop1}
Let $\gamma\in (0, \infty)$ and $p\in (1, \infty)$. 	Assume that $|q|\geq \Gamma_0=\Gamma_0(p,n, \gamma)\geq 1$.  Let  $ w$ be a viscosity solution of \eqref{devia} in $B_1$ with $\osc_{B_1}( w)\leq 1$ and $||f||_{L^\infty(B_1)}\leq 1$. For all $r\in (0,\frac 34)$  there exist a constant  $\beta=\beta(p,n,\gamma)\in (0,1)$ and a constant $C=C(p,n,\gamma)>0$ such that   for all  $x,y\in B_r$,
 	\begin{equation}
 	\begin{split}
 	\abs{ w(x)- w(y)}\le C \abs{x-y}^\beta.
 	\end{split}
 	\end{equation}
 	
 \end{lemma}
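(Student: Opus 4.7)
The plan is to apply the Ishii-Lions doubling-of-variables technique, with the hypothesis $|q|\geq \Gamma_0$ used to tame the otherwise troublesome coefficient $|Dw+q|^\gamma$. Fix $x_0\in B_{1/2}$ and, for constants $L, L_1 > 0$ and $\beta\in(0,1)$ to be chosen depending on $p, n, \gamma$, introduce on $\overline{B}_{3/4}\times\overline{B}_{3/4}$ the auxiliary function
$$
\Psi(x,y) := w(x) - w(y) - L\,\omega(|x-y|) - L_1\bigl(|x-x_0|^2+|y-x_0|^2\bigr),
$$
with a suitable concave modulus $\omega$ (for instance $\omega(t) = t^\beta$, or the modified Ishii-Lions modulus $\omega(t) = t - \omega_0 t^{1+\delta_0}$ when the ratio of Pucci constants requires it). Proving $\Psi\leq 0$ yields the desired H\"older bound at $x_0$, and since $x_0$ ranges over $B_{1/2}$ the estimate on $B_r$ follows. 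Assume for contradiction that $\Psi$ attains a positive maximum at $(\bar x,\bar y)$; choosing $L_1$ large compared to $\osc w\leq 1$ places this point in the interior and forces $|\bar x - x_0|, |\bar y - x_0| \leq CL_1^{-1/2}$, while $\Psi>0$ keeps $s:=|\bar x - \bar y|$ small.

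By the Crandall-Ishii-Lions theorem of sums, there are matrices $X, Y\in S^n$ and vectors
$$
p_x = L\omega'(s)\hat e + 2L_1(\bar x - x_0),\qquad p_y = L\omega'(s)\hat e - 2L_1(\bar y - x_0),\qquad \hat e = \frac{\bar x-\bar y}{s},
$$
with $(p_x, X)\in \overline{J}^{2,+}w(\bar x)$, $(p_y,-Y)\in \overline{J}^{2,-}w(\bar y)$, together with a block-matrix inequality forcing $X+Y$ to carry a strongly negative eigenvalue of order $L\omega''(s)$ in the direction $\hat e$ and being controlled in orthogonal directions. The decisive use of $|q|\geq \Gamma_0$ is the comparability of the perturbed gradients: the difference $p_x - p_y = 2L_1\bigl((\bar x - x_0)+(\bar y - x_0)\bigr)$ is bounded purely in terms of $L_1$, while the common component $L\omega'(s)\hat e + q$ can be made dominant by requiring $\Gamma_0\geq CL_1$. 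Consequently $|p_x+q|, |p_y+q|$ are strictly positive, $(|p_x+q|/|p_y+q|)^{\pm\gamma}\leq C(\gamma)$, and $\|A(p_x+q)-A(p_y+q)\|\leq CL_1/\min(|p_x+q|,|p_y+q|)$ where $A(p) = I+(p-2)\hat p\otimes\hat p$. Rewriting the equation as $-\operatorname{tr}(A(Dw+q)D^2w)=f\,|Dw+q|^{-\gamma}$ (which is legitimate at $(\bar x,\bar y)$ thanks to the lower bounds on $|p_x+q|, |p_y+q|$) and subtracting the sub- and supersolution inequalities, uniform ellipticity of $A$ (eigenvalues in $[\lambda,\Lambda]$ with $\lambda=\min(1,p-1)$, $\Lambda=\max(1,p-1)$) yields
$$
\operatorname{tr}\bigl(A(p_x+q)(X+Y)\bigr) \leq \|A(p_x+q)-A(p_y+q)\|\,\|Y\| + \frac{C\|f\|_\infty}{\min(|p_x+q|,|p_y+q|)^\gamma}.
$$
The matrix inequality bounds the left-hand side above by a quantity dominated by $\lambda L\omega''(s) < 0$; choosing $\omega,\beta$ so that this contribution overwhelms the perpendicular one, and then $L$ large, produces the contradiction.

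The principal obstacle is the cross term $\|A(p_x+q)-A(p_y+q)\|\,\|Y\|$: since $\|Y\|$ can itself be of size $L|\omega''(s)|$, the directional mismatch $\|A(p_x+q)-A(p_y+q)\|$ has to remain uniformly small as $L\to\infty$. This is exactly what $|q|\geq \Gamma_0(p,n,\gamma)$ provides: taking $\Gamma_0$ large compared to $L_1$ renders $p_x - p_y$ negligible relative to $p_x+q$, keeping the unit vectors $\widehat{p_x+q}$ and $\widehat{p_y+q}$ close. A secondary delicate point is the selection of $\omega$, which must be engineered so that the large negative $\hat e$-eigenvalue of $X+Y$, weighted by the corresponding eigenvalue of $A(p_x+q)$, dominates the positive perpendicular eigenvalues; this is standard for Ishii-Lions arguments and requires $\beta$ to be chosen small depending only on $p$ and $n$.
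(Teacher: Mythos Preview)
Your Ishii--Lions setup is the right framework and coincides with the paper's, but there is a real gap in how you obtain the lower bound on $|p_x+q|$ and $|p_y+q|$. You argue that taking $\Gamma_0\ge CL_1$ makes the ``common component'' $L\omega'(s)\hat e+q$ dominant. This only controls the localization error $2L_1(\bar x-x_0)$; it says nothing about possible cancellation between $L\omega'(s)\hat e$ and $q$ themselves. Since $L$ is driven to $+\infty$ at the end of the argument while $\Gamma_0$ is a fixed constant, nothing prevents the scenario $q\approx -L\omega'(s)\hat e$ at the maximum point, in which case $|p_x+q|$ can be of order $L_1^{1/2}$ or smaller. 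Then both the source term $|f|\,|p_x+q|^{-\gamma}$ (recall $\gamma>0$) and, more importantly, the cross term $\|A(p_x+q)-A(p_y+q)\|\,\|Y\|\sim \dfrac{CL_1}{|p_x+q|}\cdot L|\omega''(s)|$ become comparable to or larger than the good contribution $\lambda L\omega''(s)$, and the contradiction fails.

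The paper closes this gap by inserting a preliminary step you are missing: it first proves that $\tilde w(x):=w(x)+q\cdot x$ solves the original equation \eqref{genpl} and therefore enjoys a Lipschitz bound $|\tilde w(x)-\tilde w(y)|\le C(1+\|\tilde w\|_{L^\infty}+\|f\|_{L^\infty})|x-y|$, whence $|w(x)-w(y)|\le \bar c_1|q|\,|x-y|$ once $|q|\ge\Gamma_0$. At the positive maximum this yields $L\phi(|\bar x-\bar y|)\le \bar c_1|q|\,|\bar x-\bar y|$, i.e.\ $L\beta|\bar x-\bar y|^{\beta-1}\le \bar c_1\beta|q|\le |q|/4$ after choosing $\beta$ small depending only on $p,n,\gamma$. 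This forces $|a|,|b|\le |q|/2$, hence $|a+q|,|b+q|\ge |q|/2\ge L\beta|\bar x-\bar y|^{\beta-1}$, and the cross term is then bounded by $C(n,p)M|\bar x-\bar y|^{-1}$, which is indeed beaten by the good term $cL|\bar x-\bar y|^{\beta-2}$ for $L$ large. Your sketch becomes correct once you insert this Lipschitz step (or an equivalent device guaranteeing $L\omega'(s)\lesssim |q|$ at the maximum); without it the argument does not close.
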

 For the proof, see Appendix \ref{appi1}.
 
The following lemma provides a uniform H\"older estimate for small slopes.

\begin{lemma}\label{lipe2}
Let $\gamma\in (0, \infty)$ and $p\in(1, \infty)$.  For all $r\in (0,3/4)$, there exist  a constant $\beta=\beta(p,n)\in(0,1)$ and a positive constant $C=C(p,n,\gamma)$ such that any viscosity solution $w$ of \eqref{devia} with $\osc_{B_1}( w)\leq 1$,  $||f||_{L^\infty(B_1)}\leq 1$ and  $|q|\leq \Gamma_0=\Gamma_0(p,n, \gamma)$, satisfies 
\begin{equation}
[w]_{C^{0,\beta}(B_r)}\leq C.
\end{equation}
\end{lemma}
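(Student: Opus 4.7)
The key observation is that under the hypotheses of the lemma the slope $|q|\le\Gamma_0$ is a priori bounded, so it is cheaper to pass back to the original equation \eqref{genpl} than to work with the deviation equation \eqref{devia} directly (whose ellipticity degenerates when $Dw+q$ vanishes). I would set $u(x):=w(x)+q\cdot x$; a direct check from the definition shows that $u$ is a viscosity solution of $-|Du|^\gamma\Delta_p^N u=f$ in $B_1$. Since shifting $w$ by a constant changes neither $\osc_{B_1}(w)$ nor its H\"older seminorm, we may arrange $\|w\|_{L^\infty(B_1)}\le \tfrac12$, whence
\[
\|u\|_{L^\infty(B_1)}\le \tfrac12+\Gamma_0=:M_0(p,n,\gamma).
\]

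Next I would invoke the local H\"older estimate for bounded viscosity solutions of \eqref{genpl} with bounded right-hand side, obtained by Birindelli--Demengel \cite{birdem3,birdem2,birdemen1} via the Ishii--Lions doubling-variable method (for $\gamma>0$ the operator is continuous, and the needed estimate is really the $q=0$ specialization of Lemma~\ref{prop1} carried out in Appendix~\ref{appi1}). This produces an exponent $\beta=\beta(p,n)\in(0,1)$ and a constant $K$ depending only on $p,n,\gamma,M_0$ and $\|f\|_{L^\infty(B_1)}$, hence only on $p,n,\gamma$, such that $[u]_{C^{0,\beta}(B_r)}\le K$. Then for $x,y\in B_r$ I would estimate
\[
|w(x)-w(y)|\le |u(x)-u(y)|+|q|\,|x-y|\le K|x-y|^\beta+\Gamma_0\,(2r)^{1-\beta}|x-y|^\beta,
\]
using $|x-y|\le 2r<3/2$ and $\beta<1$ to absorb the linear piece; this delivers $[w]_{C^{0,\beta}(B_r)}\le C(p,n,\gamma)$, as claimed.

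The only real obstacle is ensuring that the cited H\"older estimate for $u$ comes with a constant depending solely on $p,n,\gamma,M_0,\|f\|_{L^\infty}$ and has no hidden further dependence. If a citation in exactly the desired form is not convenient, one simply reruns the Ishii--Lions doubling-variable argument on the equation satisfied by $u$ itself; for $\gamma>0$ the operator is continuous, and the only subtlety --- the degeneracy at $Du=0$ --- is handled by the standard device of perturbing the doubled test function so that its gradient is non-vanishing at the maximum point, exactly as in the proof of Lemma~\ref{prop1}.
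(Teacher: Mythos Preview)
Your argument is correct, but it takes a different route from the paper's. The paper does \emph{not} pass back to $u=w+q\cdot x$; instead it observes that whenever $|Dw|\ge 2\Gamma_0$ one has $|Dw+q|\ge\Gamma_0\ge 1$, so that on the set $\{|Dw|\ge 2\Gamma_0\}$ the equation \eqref{devia} implies the Pucci-extremal inequalities
\[
\mathcal{M}^+(D^2w)+|f|\ge 0,\qquad \mathcal{M}^-(D^2w)-|f|\le 0,
\]
and then invokes the Imbert--Silvestre ``large gradient'' H\"older estimate \cite{imbersillarge} (see also \cite{delarue}) directly for $w$. This gives the result in two lines, with $\beta$ depending only on the ellipticity constants $\min(1,p-1)$, $\max(1,p-1)$ and $n$, hence only on $(p,n)$.

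Your route---return to $u=w+q\cdot x$, exploit $|q|\le\Gamma_0$ to bound $\|u\|_{L^\infty(B_1)}$, and invoke a H\"older/Lipschitz estimate for \eqref{genpl} itself---is in fact exactly the manoeuvre the paper uses at the \emph{start} of the proof of Lemma~\ref{prop1} in Appendix~\ref{appi1} (there for large $|q|$, where it only gives a $|q|$-dependent Lipschitz constant). With $|q|$ bounded, that first step already gives a uniform Lipschitz bound for $u$, hence for $w$, and your transfer inequality is then trivially valid. Two small remarks: first, what that first step really yields is Lipschitz continuity of $u$, so your exponent claim $\beta=\beta(p,n)$ is justified because any $\beta\in(0,1)$ works a posteriori; a bare citation to the Birindelli--Demengel H\"older estimate alone would in general only give $\beta=\beta(p,n,\gamma)$. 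Second, that Lipschitz estimate in Appendix~\ref{appi1} itself rests on a prior H\"older bound obtained via \cite{imbabp,imbersillarge} (cf.\ Proposition~\ref{appendix1}), so your argument is not logically lighter than the paper's unless you replace that input by a direct Ishii--Lions H\"older estimate \`a la \cite{birdem3,birdem2}. Either way, the paper's three-line proof via \cite{imbersillarge} is the shorter path; yours has the advantage of producing a Lipschitz, not merely H\"older, conclusion.
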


\begin{proof}
If $|D w|\geq 2\Gamma_0 $ then $|Dw+q|\geq ||Dw|-|q||\geq \Gamma_0\geq 1$ and $|Dw+q|^{-\gamma}\leq 1$.  Hence
we have
$$\left\{\begin{array}{ll}\mathcal{M}^+(D^2w)+|f|\geq 0\\
\mathcal{M}^-(D^2w)-|f|\leq 0.\end{array}\right.$$
Using the result of  \cite{imbersillarge} (see also \cite{delarue}), there exists $\beta=\beta(p,n)\in(0,1)$ such that
\[
[w]_{C^{0,\beta}(B_r)}\leq  C=C\left(p,n,r,\underset{B_1}{\osc(w)},\norm{f}_{L^n(B_1)}\right).
\] 
\end{proof}

As a consequence of Lemma \ref{prop1} and Lemma \ref{lipe2}, we have the following uniform Hölder estimate.
\begin{lemma}\label{hihtoma}
Let $\gamma\in (0, \infty)$, $p\in(1, \infty)$ and $f\in L^\infty(B_1)\cap C(B_1)$.  Let $w$ be a viscosity solution to \eqref{devia} with $\osc_{B_1}( w)\leq 1$,  $||f||_{L^\infty(B_1)}\leq 1$. Then, for all $r\in (0,3/4)$, there exist  a constant $\beta=\beta(p,n,\gamma)\in(0,1)$ and a positive constant $C=C(p,n,\gamma)$ such that  $w$ satisfies 
\begin{equation}
[w]_{C^{0,\beta}(B_r)}\leq C.
\end{equation}
\end{lemma}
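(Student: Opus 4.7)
The proof will proceed by a dichotomy based on the size of $|q|$, combining the two preceding lemmas which together cover the entire range of slopes.

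The plan is as follows. Let $\Gamma_0 = \Gamma_0(p,n,\gamma) \ge 1$ be the threshold produced by Lemma \ref{prop1}, which is also the one appearing in Lemma \ref{lipe2}. Fix $r \in (0, 3/4)$. I will distinguish the two cases $|q| \ge \Gamma_0$ and $|q| < \Gamma_0$, apply the corresponding lemma on $B_r$, and then take the worse Hölder exponent and the larger constant.

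In the first case $|q| \ge \Gamma_0$, the hypotheses of Lemma \ref{prop1} are met verbatim (since $\osc_{B_1}(w) \le 1$ and $\|f\|_{L^\infty(B_1)} \le 1$), so there exist $\beta_1 = \beta_1(p,n,\gamma) \in (0,1)$ and $C_1 = C_1(p,n,\gamma) > 0$ with $[w]_{C^{0,\beta_1}(B_r)} \le C_1$. In the complementary case $|q| < \Gamma_0 = \Gamma_0(p,n,\gamma)$, the hypotheses of Lemma \ref{lipe2} hold, yielding $\beta_2 = \beta_2(p,n) \in (0,1)$ and $C_2 = C_2(p,n,\gamma)$ with $[w]_{C^{0,\beta_2}(B_r)} \le C_2$. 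Setting
\begin{equation*}
\beta := \min(\beta_1, \beta_2) \in (0,1), \qquad C := \max(C_1, C_2) \cdot (1 + \diam(B_r)^{|\beta_1 - \beta_2|}),
\end{equation*}
and using that $|x-y|^{\beta_i} \le (2r)^{\beta_i - \beta}\,|x-y|^{\beta}$ on $B_r$ to downgrade the exponent, we obtain in both cases a single estimate $[w]_{C^{0,\beta}(B_r)} \le C$ with $\beta, C$ depending only on $p,n,\gamma$.

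There is no real obstacle here beyond bookkeeping: the nontrivial analytic content is already concentrated in Lemma \ref{prop1} (Ishii–Lions / ABP type argument exploiting the large-gradient regime) and Lemma \ref{lipe2} (Imbert–Silvestre / Delarue type Hölder estimates for solutions of Pucci inequalities arising when the gradient of $w$ is itself large). The only minor care needed is ensuring that the threshold $\Gamma_0$ is taken consistently in both applications, and that the two Hölder exponents—one of which ($\beta_2$) depends only on $p,n$ and the other ($\beta_1$) also on $\gamma$—are harmonized to a single $\beta(p,n,\gamma)$ at the cost of absorbing a multiplicative constant depending on $r \le 3/4$.
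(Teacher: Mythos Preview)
Your proposal is correct and follows exactly the paper's approach: the paper simply states that Lemma \ref{hihtoma} is ``a consequence of Lemma \ref{prop1} and Lemma \ref{lipe2}'' without writing out any further details, and your dichotomy on $|q| \gtrless \Gamma_0$ with harmonization of the two H\"older exponents is precisely the intended (and only natural) reading of that sentence.
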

\subsection{Proof of the improvement of flatness}

The next lemma gives uniform H\"older estimates for the limit equation independent of $|q|$, and is needed in the contradiction argument in the proof of the key Lemma \ref{flatle}, where we show improvement of flatness. 
We refer the reader to \cite[Lemma 3.2]{atparuo}.
  
\begin{lemma}\label{lip2}
	Let $v$ be a viscosity solution of
	  \begin{equation}\label{homeo}
-\Delta v-(p-2) \left\langle D^2v\frac{D v+q}{\abs{D v+q}}, \frac{D v+q}{\abs{D v+q}}\right\rangle=0\quad\text{in}\quad B_1
\end{equation}
	  with $\underset{B_1}{\osc{v}}\leq 1$. For all $r\in (0,\frac12]$, there exist constants $ C_0=C_0(p,n)>0$  and $\beta_1=\beta_1(p,n)>0$ such that 
	\begin{equation}
[v]_{C^{1,\beta_1}(B_{r})}\leq  C_0.\qedhere
	\end{equation}
\end{lemma}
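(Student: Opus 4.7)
The strategy is to reduce the claim to the standard $C^{1,\alpha}$ regularity theory for the normalized $p$-Laplacian via an affine change of variables, the delicate point being the $|q|$-independence of the constants. Set $w(x) := v(x) + q\cdot x$; then $Dw = Dv + q$, $D^2 w = D^2 v$, and $\tfrac{Dv+q}{|Dv+q|} = \tfrac{Dw}{|Dw|}$, so $w$ is a viscosity solution of the homogeneous normalized $p$-Laplace equation $-\Delta_p^N w = 0$ in $B_1$ (the substitution preserves the viscosity notion because the operator is positively homogeneous in the gradient, as in Lemma \ref{limitequequiv}). Since $q$ is a constant vector, the seminorms satisfy $[Dv]_{C^{\beta_1}(B_r)} = [Dw]_{C^{\beta_1}(B_r)}$, so the lemma reduces to bounding the $C^{1,\beta_1}$ seminorm of $w$ by a constant depending only on $p$ and $n$.

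The naive invocation of the standard interior estimate $[w]_{C^{1,\beta_1}(B_{1/2})} \leq C(p,n)\|w\|_{L^\infty(B_1)}$ is insufficient, because $\|w\|_{L^\infty(B_1)} \leq \|v\|_{L^\infty(B_1)} + |q|$ blows up with $|q|$. The correct approach is an improvement-of-flatness scheme initialized with the affine function $\ell_0(x) := w(0) + q\cdot x$: at the initial scale the flatness is exactly
\begin{equation*}
\|w - \ell_0\|_{L^\infty(B_1)} = \|v - v(0)\|_{L^\infty(B_1)} \leq \osc_{B_1} v \leq 1,
\end{equation*}
which is manifestly $q$-independent. Iterating along the dyadic scales $\rho^k$ produces successively better affine approximations; the standard Arzel\`a--Ascoli compactness argument (whose limiting profile at each rescaling is a viscosity solution of a linear constant-coefficient uniformly elliptic equation, hence $C^\infty$ with universal Schauder estimates) then yields $[Dw]_{C^{\beta_1}(B_r)} \leq C_0(p,n)$.

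The technical ingredient needed to run this iteration is a $q$-independent local Lipschitz bound for $v$, which I would establish by the Ishii--Lions doubling-variables method applied directly to the equation, rewritten in the form $-\tr(A(Dv)D^2 v) = 0$ with
\begin{equation*}
A(\xi) = I + (p-2)\,\hat\xi\otimes\hat\xi, \qquad \hat\xi := \frac{\xi+q}{|\xi+q|}.
\end{equation*}
The eigenvalues of $A$ lie in the interval $[\min\{1,p-1\},\max\{1,p-1\}]$ uniformly in $q$, so the doubling argument with auxiliary function $v(x) - v(y) - L\omega(|x-y|) - M|x-x_0|^2$ sees only the $q$-independent Pucci bounds $\mathcal{M}^\pm$ of the resulting matrix inequality and delivers $\|Dv\|_{L^\infty(B_{3/4})} \leq L(p,n)\osc_{B_1} v \leq L(p,n)$, independent of $q$. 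The main obstacle throughout is precisely this $q$-independence, which is exactly where Ishii--Lions is essential; the full argument is carried out in \cite[Lemma 3.2]{atparuo}.
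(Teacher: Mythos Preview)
Your proposal is correct and aligned with the paper: the paper does not give a self-contained proof of this lemma but simply refers the reader to \cite[Lemma 3.2]{atparuo}, and your sketch (reduction to the homogeneous normalized $p$-Laplacian via $w=v+q\cdot x$, identification of the $q$-independence of the $C^{1,\beta_1}$ seminorm, and the Ishii--Lions argument for the $q$-independent Lipschitz bound) accurately outlines the strategy of that reference before citing it. There is nothing to correct.
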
  

\begin{lemma}\label{flatle} 
Let $\gamma\in (-1, \infty)$ and $p\in (1, \infty)$. When $\gamma<0$ suppose that $\norm{f}_{L^\infty(B_1)}|q|^{-\gamma}\leq a_0(p,n, \gamma)$.
Then there exist $\eps_0=eps_0(p,n, \gamma)\in(0,1)$ and $\rho=\rho (p,n,\gamma)\in(0,1)$ such that, for any $q\in\R^n$ and any viscosity solution $ w$ of \eqref{devia} with $\osc_{B_1}( w)\leq 1$,  $||f||_{L^\infty(B_1)}\leq \eps_0$ and $|q|^{-\gamma}\norm{f}_{L^\infty(B_1)}\leq c_0(p,n, \gamma)\eps_0$ when $\gamma<0$, there exists $q'\in\Rn$ with $|q'|\leq \bar C(p,n, \gamma)$ such that
$$\underset{x\in B_{\rho}}{\osc}\,( w(x)-q'\cdot x)\leq \frac{1}{2}\rho.$$
\end{lemma}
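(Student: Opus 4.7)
The plan is to use the standard contradiction-compactness-improvement scheme. One would begin by assuming the conclusion fails for every choice of $\eps_0, \rho$; then, for the values of $\rho$ and $\bar C$ to be fixed at the end, one can extract sequences $\eps_k\searrow 0$, $f_k\in L^\infty(B_1)\cap C(B_1)$ with $\norm{f_k}_{L^\infty(B_1)}\le \eps_k$ (and $|q_k|^{-\gamma}\norm{f_k}_{L^\infty(B_1)}\le c_0\eps_k$ in the singular case), vectors $q_k\in\R^n$, and viscosity solutions $w_k$ of the corresponding equation \eqref{devia} with $\osc_{B_1}(w_k)\le 1$, for which
\begin{equation*}
\osc_{B_\rho}\bigl(w_k(x)-q'\cdot x\bigr)>\tfrac{1}{2}\rho\qquad \text{for every } q'\in\R^n\ \text{with}\ |q'|\le \bar C.
\end{equation*}
The goal will be to reach a contradiction for some $\rho$ depending only on $(p,n,\gamma)$.

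Next, I would apply the equi-H\"older estimates: Lemma \ref{lip1} when $\gamma\in(-1,0]$ and Lemma \ref{hihtoma} when $\gamma>0$ (which itself follows from Lemmas \ref{prop1} and \ref{lipe2}). They give uniform $C^\beta$ bounds on the $w_k$ on any $\overline{B_r}$, $r<3/4$, so Arzel\`a--Ascoli yields a subsequence $w_k\to w_\infty$ converging uniformly on compact subsets of $B_{3/4}$, with $\osc_{B_{3/4}}(w_\infty)\le 1$. After passing to a further subsequence one is in one of two situations: \emph{(A)} $q_k\to q_\infty\in\R^n$, or \emph{(B)} $|q_k|\to\infty$ and $q_k/|q_k|\to e$ for some unit vector $e$. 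In case (A), standard viscosity stability (since $f_k\to 0$ uniformly) together with Lemma \ref{limitequequiv} shows that $w_\infty$ solves in $B_{3/4}$ the homogeneous equation of Lemma \ref{lip2} with $q$ replaced by $q_\infty$, so $[w_\infty]_{C^{1,\beta_1}(\overline{B_{1/2}})}\le C_0(p,n)$. In case (B), testing against a smooth $\varphi$ at a strict extremum $x_0$ of $w_\infty-\varphi$ and using the usual perturbation argument produces points $x_k\to x_0$ where $w_k-\varphi$ has an analogous extremum; since $|D\varphi(x_k)+q_k|\to\infty$ the operator is nondegenerate there, the normalized direction $(D\varphi(x_k)+q_k)/|D\varphi(x_k)+q_k|$ tends to $e$, and $f_k(x_k)/|D\varphi(x_k)+q_k|^\gamma\to 0$ (in the singular case this uses precisely the hypothesis $|q_k|^{-\gamma}\norm{f_k}_{L^\infty(B_1)}\le c_0\eps_k$). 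Passing to the limit, $w_\infty$ solves the constant-coefficient linear equation $-\Delta w_\infty-(p-2)\langle D^2 w_\infty\,e,e\rangle=0$ in $B_{3/4}$, and classical Schauder theory gives the same $C^{1,\beta_1}$ estimate with a universal constant.

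Setting $q':=Dw_\infty(0)$, we get $|q'|\le C_0=:\bar C$ and $\osc_{B_\rho}(w_\infty(x)-q'\cdot x)\le 2C_0\rho^{1+\beta_1}$ for every $\rho\in(0,1/2]$ by Taylor's inequality. Fixing $\rho=\rho(p,n,\gamma)\in(0,1/2)$ so that $2C_0\rho^{\beta_1}\le 1/4$, the uniform convergence $w_k\to w_\infty$ on $\overline{B_\rho}$ will force $\osc_{B_\rho}(w_k(x)-q'\cdot x)\le \rho/2$ for all $k$ sufficiently large, contradicting the starting assumption and proving the lemma.

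The hardest step will be the viscosity passage to the limit in case (B): only $C^\beta$ bounds on the $w_k$ are available, so the equation cannot be evaluated directly on $Dw_k$ and the whole argument must go through test functions. The essential point is that $|D\varphi+q_k|\to\infty$ keeps the singular/degenerate factor $|\cdot|^\gamma$ under control and simultaneously kills the source $f_k/|\cdot|^\gamma$ in the limit; in the singular case $\gamma<0$, the smallness hypothesis on $|q|^{-\gamma}\norm{f}_{L^\infty(B_1)}$ stated in the lemma is exactly what makes this step succeed.
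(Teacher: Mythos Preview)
Your proposal is correct and follows essentially the same contradiction--compactness--improvement scheme as the paper's own proof, including the same case split on the boundedness of $(q_k)$, the same invocation of Lemmas \ref{lip1}, \ref{hihtoma}, \ref{limitequequiv}, and \ref{lip2}, and the same choice of $\rho$ via the $C^{1,\beta}$ estimate for $w_\infty$. Your treatment of the viscosity passage to the limit in case (B) is in fact more detailed than the paper's (which simply says ``multiplying by $|q_j|^{-\gamma}$ and passing to the limit''), and your identification of where the hypothesis $|q|^{-\gamma}\norm{f}_{L^\infty}\le c_0\eps_0$ is needed is exactly right.
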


\begin{proof}
We use a contradiction argument. Assume that there exist a sequence of functions $(f_j)$ with $||f_j||_{L^\infty(B_1)} \rightarrow 0$, a sequence of vectors $(q_j)$ such that $|q_j|^{-\gamma}||f_j||_{L^\infty(B_1)} \rightarrow 0$ when $\gamma<0$ and a sequence of viscosity solutions $( w_j)$ with $\osc_{B_1}(w_j)\leq 1$ of
\begin{equation}\label{holidayst}
-|Dw_j+q_j|^\gamma\left[\Delta  w_j+(p-2) \left\langle D^2w_j\frac{D  w_j+q_j}{\abs{D  w_j+q_j}}, \frac{D w_j+q_j}{\abs{D  w_j+q_j}}\right\rangle\right]=f_j,
\end{equation}
such that, for all $q'\in \Rn$ and any $\rho\in(0,1)$
\begin{equation}\label{contra}
\underset{x\in B_{\rho}}{\osc}( w_j(x)-q'\cdot x)>\frac{\rho}{2}.
\end{equation}
Relying on the compactness result of Lemma \ref{lip1} and  Lemma \ref{hihtoma}, there exists a continuous function $ w_{\infty}$ such that $ w_j\to  w_{\infty}$ uniformly in $B_\rho$ for any $\rho \in (0,3/4)$.
Passing to the limit in \eqref{contra}, we have that for any vector $q'$,
\begin{equation}\label{contrafin}
\underset{x\in B_{\rho}}{\osc} ( w_{\infty}(x)-q'\cdot x)>\dfrac{\rho}{2}.
\end{equation}

We treat separately the cases where the sequence $(q_j)$ is bounded or unbounded. Suppose first that the sequence $(q_j)$ is bounded. Using a compactness argument and relaxed limits,  we extract a subsequence $( w_j)$ converging to a limit $ w_{\infty}$, which satisfies in $B_1$
\begin{equation}\label{unifeq}
-|Dw_\infty+q_\infty|^\gamma\tr\left(\left(I+(p-2)\dfrac{Dw_\infty+q_\infty}{|Dw_\infty+q_\infty|}\otimes\dfrac{Dw_\infty+q_\infty}{|Dw_\infty+q_\infty|}\right )D^2 w_\infty\right) =0  
\end{equation}
in a viscosity sense. (Here $q_j\rightarrow q_\infty$ up to the same subsequence.) 
Using the result of Lemma \ref{limitequequiv}, we know  that viscosity solutions to \eqref{unifeq} are viscosity solutions to \eqref{homeo} and 
using the regularity result of Lemma \ref{lip2}, there exist $\beta_1=\beta_1(p,n)>0$ and $C_0=C_0(p,n)>0$ such that $$\norm{w_\infty}_{C^{1,\beta_1}(B_{1/2})}\leq C_0.$$

If the sequence $(q_j)$ is unbounded, we can extract a converging subsequence from $e_j=\frac{q_j}{|q_j|}$ such that $e_j\rightarrow e_\infty$. Multiplying \eqref{holidayst} by $|q_j|^{-\gamma}$ and passing to the limit, we obtain
\begin{equation}\label{der2}
-\Delta w_{\infty}-(p-2) \left\langle D^2w_{\infty}\,e_{\infty}, e_{\infty}\right\rangle=0\qquad\text{in}\quad B_1,
\end{equation}
with $|e_{\infty}|=1$. Noticing that equation \eqref{der2} can be written as
\[
-\tr{((I+(p-2) e_{\infty}\otimes e_{\infty}) D^2w_\infty)}=0,
\]
we see that equation \eqref{der2} is uniformly elliptic with constant coefficients and linear. Using the regularity result of \cite[Corollary 5.7]{caffarellicabrebook}, there is $\beta_2=\beta_2(p,n)>0$ so that $w_\infty\in C^{1,\beta_2}_\text{\text{loc}}$ and there exists $C_0=C_0(p,n)>0$ such that $\norm{w_\infty}_{C^{1,\beta_2}(B_{1/2})}\leq C_0$. 

We have thus shown that in both cases  $w_\infty\in C^{1,\beta}_\text{\text{loc}}$ for $\beta=\min(\beta_1,\beta_2)>0$. Choose $\rho\in (0, 1/2)$ such that
\begin{equation}
C_0\rho^{\beta}\leq \frac{1}{4}.
\end{equation}

By $C^{1,\beta}_\text{\text{loc}}$ regularity, there exists a vector $k_{\rho}$ with $|k_{\rho}|\leq C(p,n, \gamma)$ such that 
\begin{equation}
\underset{x\in B_{\rho}}{\osc} (w_{\infty}(x)-k_{\rho}\cdot x)\leq C_0\rho^{1+\beta}\leq \frac{1}{4}\rho.
\end{equation}
This contradicts \eqref{contrafin} so the proof is complete.
\end{proof}
Once we have proved the improvement of flatness for deviations from planes, the proof of Theorem \ref{th1} proceeds by standard iteration.

\begin{lemma}\label{lemiter}
Let $\rho$ and $\eps_0\in(0,1)$ be as in Lemma \ref{flatle} and let $u$ be a viscosity solution of \eqref{genpl} with $\gamma> -1$, $p>1$,  $\osc_{B_1} (u)\leq1$ and $||f||_{L^\infty(B_1)}\leq \eps_0$. Then, there exists $\a\in\left(0,\frac{1}{1+\gamma}\right]$ such that for all $k\in\N$, there exists $q_k\in \R^n$ such that
\begin{equation}\label{itera}
\underset{y\in B_{r_{k}}}{\osc} \, (u(y)-q_{k}\cdot y)\leq  r_k^{1+\a },
\end{equation}
where $r_k:=\rho^k$ and $|q_{k+1}-q_k|\leq C r_k^\alpha$ with $C=C(p,n,\gamma)$.
\end{lemma}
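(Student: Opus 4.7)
The plan is to prove \eqref{itera} by induction on $k$, reducing the inductive step at scale $r_k$ to a single application of Lemma \ref{flatle} on the unit ball via rescaling. The base case $k=0$ is trivial: take $q_0=0$ and use $\osc_{B_1}u\leq 1 = r_0^{1+\alpha}$.

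Assume \eqref{itera} holds at level $k$. Define
\begin{equation*}
\tilde w(y) := \frac{u(r_k y) - q_k \cdot (r_k y)}{r_k^{1+\alpha}},\qquad y\in B_1,
\end{equation*}
so that $\osc_{B_1}\tilde w \leq 1$ by the inductive hypothesis. Using the zero-homogeneity of $\Delta_\infty^N$ in the gradient, a direct computation shows that $\tilde w$ solves \eqref{devia} on $B_1$ with slope $\tilde q_k := r_k^{-\alpha} q_k$ and right-hand side $\tilde f(y) := r_k^{1-(\gamma+1)\alpha} f(r_k y)$. Choosing $\alpha \leq 1/(1+\gamma)$ makes the exponent $1-(\gamma+1)\alpha$ nonnegative, so $\|\tilde f\|_{L^\infty(B_1)} \leq \|f\|_{L^\infty(B_1)} \leq \eps_0$. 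In the singular case $\gamma<0$ the additional hypothesis $|\tilde q_k|^{-\gamma}\|\tilde f\|_{L^\infty(B_1)}\leq c_0\eps_0$ must be checked; a direct computation yields
\begin{equation*}
|\tilde q_k|^{-\gamma}\|\tilde f\|_{L^\infty(B_1)}\;\leq\; r_k^{1-\alpha}|q_k|^{-\gamma}\eps_0,
\end{equation*}
and telescoping the previously built estimates $|q_{j+1}-q_j|\leq \bar C r_j^\alpha$ gives a uniform bound $|q_k|\leq \bar C/(1-\rho^\alpha) =: M$. Choosing $\eps_0$ small enough (depending on $M$ and $|\gamma|$) then secures this hypothesis.

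Lemma \ref{flatle} applied to $\tilde w$ produces $q'\in\R^n$ with $|q'|\leq \bar C$ and $\osc_{B_\rho}(\tilde w - q'\cdot y)\leq \rho/2$. Setting $q_{k+1}:= q_k + r_k^\alpha q'$ and undoing the rescaling gives
\begin{equation*}
\osc_{B_{r_{k+1}}}\!\bigl(u(z) - q_{k+1}\cdot z\bigr)\;\leq\;\tfrac{\rho}{2}\,r_k^{1+\alpha}.
\end{equation*}
The right-hand side is at most $r_{k+1}^{1+\alpha}=\rho^{1+\alpha}r_k^{1+\alpha}$ precisely when $\rho^\alpha\geq 1/2$. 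Thus the final choice of $\alpha$ is any element of $\left(0,1/(1+\gamma)\right]$ that also satisfies $\alpha\leq \log 2/|\log \rho|$. The bound $|q_{k+1}-q_k|=r_k^\alpha|q'|\leq\bar C r_k^\alpha$ is immediate, closing the induction.

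The main obstacle is the singular range $\gamma\in(-1,0)$: the rescaled slope $\tilde q_k = r_k^{-\alpha}q_k$ can grow in $k$, but the compensating factor $r_k^{1-(\gamma+1)\alpha}$ in $\tilde f$ (available thanks to $\alpha\leq 1/(1+\gamma)$), together with the uniform bound on $|q_k|$ coming from the telescoping sum, keeps the product $|\tilde q_k|^{-\gamma}\|\tilde f\|_{L^\infty}$ under control once $\eps_0$ is taken sufficiently small. The degenerate range $\gamma\geq 0$ bypasses this difficulty entirely, since only the single smallness condition $\|\tilde f\|_{L^\infty(B_1)}\leq\eps_0$ has to be propagated.
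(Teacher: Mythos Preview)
Your argument is correct and is essentially identical to the paper's own proof: same rescaling $w_k(x)=r_k^{-1-\alpha}(u(r_kx)-q_k\cdot r_kx)$, same choice of $\alpha$ satisfying $\rho^\alpha\geq 1/2$ and $\alpha\leq 1/(1+\gamma)$, same telescoping bound on $|q_k|$, and the same application of Lemma~\ref{flatle} at each step. One small remark: in the singular case the inequality you need, $M^{-\gamma}\leq c_0$, does not improve by shrinking $\eps_0$ (it cancels from both sides); rather it is a compatibility of constants $\bar C,\rho,\alpha,c_0$ all depending only on $p,n,\gamma$, exactly as the paper handles it.
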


\begin{proof}
For $k=0$, the estimate \eqref{itera} follows from the assumption $\osc_{B_1}(u) \leq 1$. Next we take $\alpha\in (0,\min(1,\frac{1}{\gamma+1}))$ such that $\rho^{\alpha}>1/2$.
We assume for $k\geq 0$ that we already constructed $q_k\in \R^n$ such that \eqref{itera} holds true. 
To prove the inductive step $k\rightarrow k+1$, we rescale the solution considering for $x\in B_1$
$$w_k(x)=r_k^{-1-\alpha}\big(u(r_k x)-q_k\cdot (r_k x)\big).$$
By induction assumption, we have $\underset{B_1}{\osc}\,(w_k)\leq 1$, and $w_k$ satisfies
$$-\abs{D w_k+\xi_k}^\gamma\left[\Delta w_k+(p-2) \left\langle D^2w_k\frac{D w_k+\xi_k}{\abs{D w_k+\xi_k}}, \frac{D w_k+\xi_k}{\abs{D w_k+\xi_k}}\right\rangle\right]=f_k,  $$
where  $\xi_k=(q_k/r_k^{\alpha})$ and  $f_k(x)=r_k^{1-(\gamma+1)\alpha}f(r_k x)$ with $\norm{f_k}_{L^{\infty}(B_1)}\leq \eps_0$ since $\alpha<\frac{1}{\gamma+1}$.
Moreover, for $-1<\gamma<0$, we have that $|\xi_k|^{-\gamma}\norm{f_k}_{L^\infty(B_1)}\leq \norm{f}_{L^\infty(B_1)}r_k^{1-\alpha}|q_k|^{-\gamma}\leq\eps_0c_0(p,n, \gamma)$.
Here we used that, by assumption it holds
$$|q_k|\leq \sum_{i=0}^k |q_i-q_{i-1}|\leq C\sum_{i=1}^{k-1}\rho^{\alpha i}\leq c_0^{\frac{-1}{\gamma}}(p,n, \gamma).$$
Using the result of Lemma \ref{flatle}, there exists $l_{k+1}\in\R^n$ such that
$$\underset{x\in B_{\rho}}{\osc}\,(w_k(x)-l_{k+1}\cdot x)\leq \frac{1}{2}\rho.$$   
Setting $q_{k+1}=q_k+ l_{k+1} r_k^{\a}$, we get
\[
\underset{x\in B_{r_{k+1}}}{\osc} \, (u(x)-q_{k+1}\cdot x)\leq \dfrac{\rho}{2} r_k^{1+\a }\leq r_{k+1}^{1+\a}.\qedhere
\]
\end{proof}

Since the estimate \eqref{itera} holds for every $k$, the proof of Theorem \ref{th1} is complete.\\

\begin{remark}
1) For $\gamma\leq 0$, when the boundary data $g\in C^{1, \beta}$ and the domain $\Omega$ has a $C^{1, \beta}$ boundary, the boundary H\"older regularity of the gradient is a direct consequence of the uniform ellipticity of the operator and the result of \cite{ma}.\\
2) For $\gamma>0$, the regularity of the gradient up to the boundary could be obtained by adapting the arguments of \cite{birdem5}.
\end{remark}

\subsection{An alternative proof for the regularity of the gradient when $\gamma\leq p-2$}
In this range we can rely on known results of weak theory.
When $\gamma\leq p-2$, we have that viscosity solutions of \eqref{genpl} are viscosity solution of 
\begin{equation}\label{pls}
-\Delta_pu=f|Du|^{p-2-\gamma}.
\end{equation}
  Since $0\leq p-2-\gamma< p-1$ and $f$ is bounded, using the result of \cite[Theorem 1.4]{ochoa} (see also \cite[Theorem 3.6]{atparuo}) which generalize the result of Julin and Juutinen \cite{julin2012new}, we get that viscosity solutions to \eqref{pls} are also weak solutions to \eqref{pls}. The result of \cite{Tolk84} implies that $u\in C^{1, \alpha}_{loc}(\Omega)$ and
$$
[u]_{C^{1,\alpha}(\Omega')} \le C=C \left(p,n,d, \gamma, d',||u||_{L^\infty(\Omega)},\norm{f}_{L^\infty(\Omega)} \right).
$$  
   Moreover for a bounded domain $\Omega$ with a $C^{1, \beta}$ boundary, when we complement the equation \eqref{genpl} with a boundary condition $g\in  C^{1, \beta}(\overline\Omega)$, then it follows from the result of \cite[Theorem 1]{lie88}, that the viscosity solution is in $C^{1, \alpha}(\overline{\Omega})$, with a norm depending only on $n,p, \Omega, \norm{f}_{L^\infty(\Omega)}, \norm{g}_{C^{1, \beta}(\overline\Omega)}$.

\begin{remark}
In the case $0<\gamma\leq p-2$, it is possible to relax the dependence on $f$ in the local $C^{1,\alpha}$estimate from $L^\infty$-norm to $L^q$-norm for some $q<\infty$. Here we just outline the idea and refer to \cite[Section 4]{atparuo}, where a similar technique was used by relying on the paper of Duzaar and Mingione \cite{DM2010}. 

Fix a viscosity solution $u$ of \eqref{genpl} and a small $\lambda>0$, then consider the approximation process which consists in studying the equation 
\begin{equation*}
\begin{cases}
-\text{\emph{div}}\, ((|Dv_\eps|^2+\eps^2)^{\frac{p-2}{2}}Dv_\eps)
=(f_\eps+\lambda u-\lambda v_\eps)(|Dv_\eps|^2+\eps^2)^{\frac{p-2-\gamma}{2}}&\text{in}\quad \Omega\\
v_\eps=u &\text{in}\quad\partial\Omega.
\end{cases}
\end{equation*}
One can provide uniform Lipschitz estimates for $v_\eps$ depending on the the $L^q$ norm of $f$ for some $q>\max\left(n, \dfrac{p}{\gamma+2},\right)$. Indeed using the fact that $u$ is a weak solution we can control the $L^p$ norm of the gradient by a Caccioppoli inequality, and the fact that the $L^\infty$ norm of $u$ is controlled by the $L^n$ norm of $f$, see the ABP estimate \cite[Theorem 1.1]{dfqsing} for viscosity solutions of \eqref{genpl}.
Combining these estimates with the computations of \cite{DM2010}, we get the uniform estimates which depends on a lower norm of $f$. Then one can prove that
\[
[u]_{C^{1,\alpha}(\Omega'')}\leq C=C\left(p,q,n,d,, \gamma, d'',||u||_{L^\infty(\Om)},\norm{f}_{L^q(\Om)}\right),
\]
where $d=\text{\emph{diam}}\, (\Om)$ and $d''=\text{\emph{dist}}\, (\Om'',\partial \Om')$. Here $\Omega''\subset \subset \Omega'\subset \subset \Omega$.
\end{remark}

\section{ $W^{2,2}$ regularity}\label{chapter4}

In this section we study the integrability properties of the second derivatives of the viscosity solutions to \eqref{genpl}.
The idea is to divide the study into different cases. When $\gamma\leq 0$, we reduce the problem to the case of the normalized $p$-Laplacian with a continuous right hand term. We rely on the uniform ellipticity of the operator and on the Cordes condition (see Theorem \ref{cordes}).
 When $\gamma>0$, the operator is degenerate and we have to use another trick.  Then we study a regularized problem. When $1>\gamma>0$ but still $|\gamma-p-2|\leq \delta$  for some $\delta$ small enough and $p$ close to $2$,  we prove
uniform estimates on the second derivatives of the approximate solutions. This will provide the desired result by passing to the limit problem.

Let us recall some known  results and open problems about the existence and integrability of the second derivatives of solutions for $p$-Laplacian type problems.
The $W^{2,2}$-estimates for elliptic equations with measurable coefficients in smooth domains were obtained by Bers and Nirenberg \cite{bers} in the two dimensional case in 1954, and by Talenti \cite{tal} in any dimensions under the Cordes condition. In \cite{camp1} Campanato established the $W^{2,q}$-estimate for elliptic
equations with measurable coefficients in two dimensions for $q$ close to 2.

 There are also some available results  for $p$-harmonic functions \cite{boja,lindq, uhlenbeck77} based on difference quotient, which assert that the nonlinear expression of the gradient
$$|Du|^{\frac{p-2}{2}}D u\in W^{1,2}_{loc}(\Omega).$$ 
This property and the local boundedness of the gradient guarantee that for $p\in (1, 2)$ the second derivative exists and $u\in W^{2,2}_{loc}(\Omega)$.

For $p>2$, the derivative $ \dfrac{\partial}{\partial x_i} \left( |Du|^{\frac{p-2}{2}} \dfrac{\partial u}{\partial x_j}\right)$  exists but the passage to $D^2 u$ is difficult at the critical points. The existence of second derivatives of $W^{1,p}$-solutions is not clear due to the degeneracy of the problem.  In dimension 2 and for $p\in (1, \infty)$,  $p$-harmonic function are in $C^{1,\alpha}_{loc}(\Omega)\cap W^{3, q}_{loc}(\Omega)$ where $q$ is any number $1\leq q< \dfrac{2}{2-\alpha}$, see \cite[Theorem 1]{iwaniecm89}.

The nonhomogeneous case 
\begin{equation}\label{plsnonhom}
-\Delta_p u=g
\end{equation}
was also treated with partial results. For $g\in L^r(\Omega)$ with $r>\max(2, \dfrac n p)$ and $p\in (1, \infty)$, Lou proved in \cite{lou} that weak solutions satisfy $|Du|^{p-1}\in W^{1,2}_{loc}(\Omega)$. Tolksdorf \cite{Tolk84} proved $W^{2,2}_{loc}$-regularity for $p\in (1,2]$ and $g\in L^{\infty}(\Omega)$, see also \cite{acerbi, lindgren, lindq, pucci,serrin}. The $W^{2,2}_{loc}$ regularity for $2<p<3$ and $g\in W^{1, n}(\Omega)$ was proved in \cite{damscui,scu1,scuin7} by using weighted estimates. Recently, Cellina \cite{cel17} relaxed the regularity assumption of $g\in W^{1,2}(\Omega)$ by adapting a Nirenberg technique. In the case $p\geq 3$ and $f\in W^{1,n}(\Omega)$ strictly bounded away from zero or satisfying certain growth condition, it has been shown that $u\in W^{2,q}_{loc}(\Omega)$ for any $q<\frac{p-1}{p-2}$, see \cite{damscui,scu1,scuin7}.
Recently, for $3\leq p<4$ and $f\in W^{1,2}(\Omega)$,  it has been shown in \cite{cel18}, that, $Du$ belongs to $W^{s,2}_{loc}(\Omega)$ for $0<s<4-p$. For  the fractional differentiability of the gradient, we refer the reader to \cite{kuusiming, santambo, mic} and the references therein.

Global (up to the boundary), full regularity for the second derivatives of
the solutions of \eqref{plsnonhom} with 0 boundary conditions are investigated in \cite{crispo2, crispo,crispogm16, crispo3,sc2}. In these papers, for $p\in (C(q),2)$ and any bounded and sufficiently smooth domain $\Omega$, the authors proved   $W^{2,q}$
regularity for any arbitrarily large $q$, getting as a by product result the H\"older continuity up to the
boundary of the gradient of the solution for any $\alpha <1$.  In particular, if $\Omega$ 
is convex, solutions belong to $W^{2,2}(\Omega)$ for any $1<p\leq 2$. The proofs are based on approximation arguments, the assumption that $p$ is close to 2, and the classical Calder\'on-Zygmund theory.

The restriction $p$ small is fundamental as the example of the functions $|x_1|^\beta$ with $\beta>1$ shows.  These functions are local solutions to $-\Delta_p u=g$  for some $g\in L^\infty_{loc}(\R^n)$ provided that $p$
is large enough, but they fail to be in $W^{2,2}_{loc}(\R^n)$
if $\beta \leq \frac32$.

\subsection*{The Cordes condition for operators in nondivergence form}

Here we recall some available results on the summability of the second derivative for operators in non-divergence form with measurable coefficients.
The first are due to \cite{bers, cordes1,tal} and require that the second order linear operator is close to the Laplacian. The case  with lower order term was also treated in  \cite{CM}.
In \cite{camp1}, Campanato extended the $W^{2,2}$ estimate to $W^{2,q}$ for $q$ sufficiently close to 2.
 We refer the reader to \cite[Theorem 1.2.1, Theorem 1.2.3]{bookdisc} for a review on the Cordes condition.

\begin{theorem}\label{cordes}
Consider a linear operator $L$ defined on the space $W^{2,2}_{loc}(\Omega)$ as 
$$Lv(x):= \overset{n}{\underset{i,j=1}{\sum}} a_{i,j}(x)D_iD_j v(x)$$
where $a_{ij}(x)$ is a symmetric matrix with measurable coefficients satisfying the  ellipticity condition

\begin{equation}
\Lambda_1 |\xi|^2\leq \overset{n}{\underset{i,j=1}{\sum}}\, a_{i,j}(x)\xi_i\xi_j\leq \Lambda_2 |\xi|^2
\end{equation}
for some $0<\Lambda_1<\Lambda_2$ and satisfying {\bf the Cordes condition}:\\
there exists $\delta\in(0, 1]$ such that for a.e $x\in \Omega$

\begin{equation}
\left(\overset{n}{\underset{i,j=1}{\sum}} (a_{ij}(x))^2\right)\leq \dfrac{1}{n-1+\delta}\left(\overset{n}{\underset{i=1}{\sum}} a_{ii}(x)\right)^2.
\end{equation}
Then any strong solution $v\in W^{2,2}_{loc}(\Omega)\cap L^2(\Omega)$ of the equation 
$Lv=f$ in $\Omega$ with $f\in L^2(\Omega)$ satisfies, for any $\Omega'\subset\subset \Omega$
\begin{equation}
\int_{\Omega'} |D^2 v|^2\, dx\leq C\left(\norm{f}^2_{L^2(\Omega)}+\norm{v}^2_{L^2(\Omega)}\right)
\end{equation}
where $C=C(n, \Lambda_1, \Lambda_2, \delta, \Omega', \Omega)$.

Moreover, under the same hypothesis on the operator  $L$, there exist two real numbers $1<q_0<2<B_1$ depending on the ellipticity constants and the dimension $n$, such that for any $q\in (q_0, B_1)$, any strong solution of $L	v=f$ with $f\in L^q(\Omega)$, satisfies for any $\Omega'\subset\subset \Omega$
\begin{equation}
\int_{\Omega'} |D^2 v|^q\, dx\leq C\left(\norm{f}^q_{L^q(\Omega)}+\norm{v}^q_{L^q(\Omega)}\right)
\end{equation}
where $C=C(n,q, \Lambda_1, \Lambda_2, \delta, \Omega', \Omega)$.
\end{theorem}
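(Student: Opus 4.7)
My plan is to follow the classical Miranda--Talenti--Cordes argument: first compare $L$ pointwise to a scalar multiple of the Laplacian, then close an energy estimate using the identity $\int(\Delta w)^2 = \int|D^2 w|^2$ for $W^{2,2}_0$ functions on a ball, and finally localize via a cutoff. For the $W^{2,q}$ portion I would invoke a Campanato-type perturbation off of $q=2$.

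For the pointwise step, set $\mu(x) := \tr(A(x))/|A(x)|^2$, where $A(x) = (a_{ij}(x))$ and $|A|^2 := \sum_{i,j} a_{ij}^2$. The uniform ellipticity gives that $\mu$ is bounded above and below by positive constants depending only on $n$, $\Lambda_1$, $\Lambda_2$. A Cauchy--Schwarz applied to $\Delta v - \mu L v = \sum_{i,j} (\delta_{ij} - \mu a_{ij}) D_{ij} v$ yields
\begin{equation*}
|\Delta v - \mu L v|^2 \leq \bigl( n - \mu \tr(A)\bigr)|D^2 v|^2 = \Bigl( n - \tfrac{(\tr A)^2}{|A|^2}\Bigr) |D^2 v|^2,
\end{equation*}
and the Cordes condition forces this to be at most $(1-\delta)|D^2 v|^2$ almost everywhere.

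Next I would establish the Miranda--Talenti identity $\int_B |D^2 w|^2 = \int_B (\Delta w)^2$ for $w \in W^{2,2}_0(B)$ by two integrations by parts on $C_c^\infty$ functions followed by density. Fix a ball $B$ with $\Om' \subset\subset B \subset\subset \Om$ and a cutoff $\eta \in C_c^\infty(B)$ with $\eta \equiv 1$ on $\Om'$; set $w = \eta v$. Applying the pointwise bound to $w$ combined with this identity gives
\begin{equation*}
\norm{\Delta w - \mu L w}_{L^2} \leq \sqrt{1-\delta}\, \norm{D^2 w}_{L^2} = \sqrt{1-\delta}\, \norm{\Delta w}_{L^2},
\end{equation*}
so the triangle inequality yields $(1-\sqrt{1-\delta})\norm{\Delta w}_{L^2} \leq C\norm{L w}_{L^2}$. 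Expanding $Lw = \eta f + 2(A D\eta)\cdot Dv + v\, \tr(A D^2\eta)$ and recalling $\norm{D^2 v}_{L^2(\Om')} \leq \norm{D^2 w}_{L^2(B)}$, one arrives at the claimed estimate.

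The main technical obstacle is the cutoff step: the commutator contribution contains a genuine first-order term in $v$ which must be absorbed without destroying the strict contraction $\sqrt{1-\delta}<1$. I would handle this by the standard interpolation $\norm{Dv}_{L^2(B)} \leq \eps \norm{D^2 v}_{L^2(B)} + C_\eps \norm{v}_{L^2(B)}$, combined with a nested three-ball Moser-type iteration $\Om' \subset\subset B'' \subset\subset B' \subset\subset \Om$ so that the $D^2 v$ term on the right can be absorbed into the left-hand side on a slightly smaller set. For the $W^{2,q}$ range with $q$ near $2$, I would invoke Campanato's perturbation argument: the $L^2 \to L^2$ bound on the solution map $f\mapsto D^2 v$, together with the classical Calder\'on--Zygmund $L^q$ estimate for the Laplacian, combined via Stampacchia interpolation on a small symmetric interval around $2$, produces a self-improving range $(q_0, B_1)$ with constants depending only on $n$, $\Lambda_1$, $\Lambda_2$, $\delta$. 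Alternatively, one can reformulate the $L^2$ bound as a reverse H\"older inequality and invoke Gehring's lemma to obtain the same conclusion.
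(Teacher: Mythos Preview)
The paper does not actually prove Theorem~\ref{cordes}; it is quoted as a classical result with references to Talenti, Cordes, Campanato, and the monograph of Maugeri--Palagachev--Softova (see the sentence immediately preceding the theorem: ``We refer the reader to \cite[Theorem 1.2.1, Theorem 1.2.3]{bookdisc}''). So there is no ``paper's own proof'' to compare against.

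That said, your outline is precisely the classical Miranda--Talenti--Cordes argument that one finds in those references. The pointwise computation with $\mu=\tr A/|A|^2$ is correct: expanding $\sum_{ij}(\delta_{ij}-\mu a_{ij})^2 = n - (\tr A)^2/|A|^2$ and invoking the Cordes hypothesis gives the contraction factor $1-\delta$. The Miranda--Talenti identity on $W^{2,2}_0(B)$ then turns this into an a priori bound, and your handling of the cutoff commutator via interpolation and a nested-balls absorption is the standard device. One small point: you should not assume $\Omega'$ sits inside a single ball $B\subset\subset\Omega$; rather, run the argument on small balls and cover $\Omega'$ by finitely many of them. For the $L^q$ part, both routes you mention (Campanato's perturbation around $q=2$, or a reverse-H\"older/Gehring argument) are exactly what the cited literature does, and either yields the stated range $(q_0,B_1)$.
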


The Cordes condition is equivalent to the uniform ellipticity condition when $n=2$
and stronger when $n\geq 3$.   As an application, it
was used to prove the second order differentiability of $p$-harmonic functions in \cite{manfrediw}.
As it is often the case, the two dimensional case is an exception, since there is no restriction on $p$. We also mention the result of \cite{lin} which asserts that for uniformly elliptic linear equation with measurable coefficients, there exists a universal $r=r(\Lambda_1, \Lambda_2)>0$ such that any $C^{1, 1}$ solution of $Lv=f$  with $f\in L^n(\Omega)$ satisfies 
$$\norm{v}_{W^{2,r}(\Omega')}\leq C\left(\norm{v}_{L^\infty(\Omega)}+\norm{f}_{L^n(\Omega)}\right),$$ 
where $C=C(n, \Omega, \Lambda_1, \Lambda_2)>0$.

The second results can be found in \cite{caffarellicabrebook} and relies on the smallness of the oscillation of the operator measured in the $L^n$ norm.
\begin{theorem}\label{cafi}
Let $v$ be a bounded viscosity solution in $B_1$ of
$$F(D^2v,x)=f(x).$$
Assume that $F$ is uniformly elliptic with ellipticity constants $\lambda$ and $\Lambda$, $F,f$ are continuous in $x$, $F(0,\cdot)\equiv 0$ and $F(D^2w,x_0)$ has $C^{1,1}$ interior estimates (with constant $c_e$) for any $x_0$ in $B_1$. Suppose that $f\in L^q(\Omega)$ for some $n<q<\infty$.
Then there exist positive constants $\beta_0$ and $C$ depending on $n$, $\lambda$, $\Lambda$, $c_e$ and $q$, such that  if the oscillation
$$\beta(x, x_0):=\underset{M\in \mathcal{S}\setminus \left\{0\right\}}{\sup}\, \dfrac{|F(M,x)-F(M, x_0)|}{\norm{M}}$$
satisfies
$$\left( \frac{\int_{B_r(x_0)}\beta(x, x_0)^n\, dx}{B_r(x_0)}\right)^{1/n}\leq \beta_0$$
for any ball $B_r(x_0)\subset B_1$, then $v\in W^{2, q}(B_{1/2})$ and
$$\norm{w}_{W^{2,q}(B_{1/2})}\leq C\left(\norm{w}_{L^\infty(B_1)}+\norm{f}_{L^q(B_1)}\right).$$

\end{theorem}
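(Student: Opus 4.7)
The plan is to adapt Caffarelli's perturbative approach to $W^{2,p}$ regularity for fully nonlinear uniformly elliptic equations (see \cite{caffarellicabrebook}). The guiding principle is that when the $L^{n}$-oscillation $\beta(\cdot,x_{0})$ of the operator in $x$ is small, solutions of the variable-coefficient equation are locally well approximated by solutions of the frozen equation $F(D^{2}w,x_{0})=0$, and the latter admit interior $C^{1,1}$ estimates by hypothesis. Combining the approximation with a Calderón-Zygmund cube decomposition yields geometric decay of the distribution function of the second derivatives, which integrates to the desired $W^{2,q}$ bound.

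First I would perform the usual normalization: by scaling, translating, and subtracting a linear function, assume we work in $B_{1}$ with $\norm{v}_{L^{\infty}(B_{1})}\le 1$ and $\norm{f}_{L^{q}(B_{1})}\le \varepsilon$ for some small $\varepsilon>0$ to be fixed later. The key \emph{approximation lemma} is as follows. For each $x_{0}\in B_{1/2}$ and $B_{r}(x_{0})\subset B_{1}$, let $h$ solve $F(D^{2}h,x_{0})=0$ in $B_{r}(x_{0})$ with $h=v$ on $\partial B_{r}(x_{0})$. Then $v-h$ is a viscosity solution of an equation whose right-hand side is controlled by $f$ and by $F(D^{2}v,x_{0})-F(D^{2}v,x)$. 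Applying the ABP estimate, uniform ellipticity, and the pointwise bound $|F(M,x)-F(M,x_{0})|\le \beta(x,x_{0})\,\norm{M}$, one gets
\[
\norm{v-h}_{L^{\infty}(B_{r}(x_{0}))}\le Cr\Bigl(\norm{f}_{L^{n}(B_{r}(x_{0}))}+\beta_{0}\,\mathcal N(v,x_{0},r)\Bigr),
\]
where $\mathcal N(v,x_{0},r)$ is an $L^{n}$-type surrogate for $D^{2}v$ on $B_{r}(x_{0})$ (expressed through Pucci extremal operators), and the smallness of $\beta$ in $L^{n}$ has been used.

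With this lemma in hand, introduce Caffarelli's pointwise function
\[
\Theta(v)(x):=\inf\bigl\{A>0 : \text{some paraboloid of opening } A \text{ touches } v \text{ from both sides at } x\bigr\}.
\]
Using the interior bound $\norm{D^{2}h}_{L^{\infty}(B_{r/2}(x_{0}))}\le c_{e}r^{-2}\osc_{B_{r}(x_{0})}h$ together with the approximation lemma, one deduces a density estimate of the following form: there exist $M>1$, $\mu\in(0,1)$ and thresholds $\varepsilon_{0},\beta_{0}>0$ depending on $n,\lambda,\Lambda,c_{e},q$ such that for every dyadic cube $Q\subset B_{1/2}$,
\[
|\{\Theta(v)>M\,t\}\cap Q|\le \mu\,|Q|\quad\text{whenever}\quad |\{\Theta(v)>t\}\cap Q|\ge \mu\,|Q|.
\]
Iterating via a Calderón-Zygmund stacked-cubes argument gives geometric decay
\[
|\{x\in B_{1/2}:\Theta(v)(x)>M^{k}\}|\le C\mu^{k},\qquad k=1,2,\ldots,
\]
and provided $\mu M^{q}<1$, summing the layer-cake representation gives $\Theta(v)\in L^{q}(B_{1/2})$ with the quantitative bound. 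Since $\Theta(v)$ controls $|D^{2}v|$ pointwise almost everywhere (by the Caffarelli-Aleksandrov differentiability implied by $\Theta(v)<\infty$ a.e.), this converts to the claimed $W^{2,q}$ estimate.

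The principal obstacle is the calibration between the smallness of $\beta_{0}$ and the target exponent $q$: one must make $\mu$ small enough (equivalently, $\beta_{0}$ small enough) so that $\mu M^{q}<1$, which forces $\beta_{0}=\beta_{0}(n,\lambda,\Lambda,c_{e},q)$, matching the dependence in the statement. A secondary technical point is that $v$ is only a viscosity solution, so the paraboloid-touching characterization, the Aleksandrov-type pointwise second differentiability on $\{\Theta(v)<\infty\}$, and the passage from the decay of $|\{\Theta(v)>t\}|$ to an honest $W^{2,q}$ bound must all be carried out through Caffarelli's $W^{2,\varepsilon}$ machinery rather than through classical second derivatives; the exact form of the ABP estimate for $v-h$ and the choice of barrier functions in each dyadic cube need to be written carefully to close the iteration.
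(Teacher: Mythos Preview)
The paper does not supply its own proof of this theorem; it is quoted as a known result from \cite{caffarellicabrebook} (introduced with ``The second results can be found in \cite{caffarellicabrebook}\ldots''). Your sketch is the standard Caffarelli perturbative argument from that reference---approximation by the frozen equation via ABP, the paraboloid-touching function $\Theta$, and a Calder\'on--Zygmund decomposition giving geometric decay of $|\{\Theta>M^{k}\}|$---so it agrees with the cited source and there is nothing to compare against in the paper itself.
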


\subsection{The case $\gamma\leq 0$  and $p$ close to 2}
Using the result of Lemma \ref{reducsing}, we can reduce the study to the case of the normalized $p$-Laplacian with a  bounded right hand term.
In this case the operator is singular but uniformly elliptic. 
We will thus use the result of Theorem \ref{cordes} for  regularized problems and provide uniform local $W^{2,2}$ estimates for $p$ in the range where the assumptions of Theorem \ref{cordes} are satisfied, $$1<p<3+\dfrac{2}{n-2}.$$

\noindent \emph{Proof of Theorem \ref{th2}.} For the moment, we can prove uniqueness only for $f=0$ (using the weak theory of the standard $p$-Laplacian) or $f$ with a constant sign (the proof is an adaptation of the arguments used in \cite{kamamik2012} using the viscosity theory). To avoid dealing with the problem of uniquess, we use the classical trick of adding a certain zero order term.

Let $u$ be a viscosity solution of equation \eqref{genpl}. For any  $\lambda >0$, the function $u$  is  a viscosity solution of 
\begin{equation*}
-\Delta_p^N u(x)+\lambda u(x)=h(x):=f(x)|Du(x)|^{-\gamma}+\lambda u(x),  \quad x\in \Omega.
\end{equation*}

 Let $\Om'\subset\subset \Om$ with $\Om'$ smooth enough. In the sequel we fix small enough $\lambda>0$ and  a viscosity solution $u$ of \eqref{genpl}. We take smooth functions $f_\eps\in C^1(\Om)\cap L^\infty(\Om)$ converging uniformly to $f$ in $\Om'$.

We consider the following regularized problem,
\begin{align}\label{perti}
\left\{
\begin{array}{ll}
-\Delta v_{\eps}-(p-2) \frac{D^2v_{\eps} Dv_{\eps}\cdot Dv_{\eps}}{| D v_{\eps}|^2+ \eps^2} +\lambda v_\eps=f_\eps(|Dv_\eps|^{2}+\eps^2)^{-\gamma/2}+\lambda u &\quad\text{in}\,\,\Om'\\
v_\eps=u&\quad\text{on}\,\, \partial\Omega'.
\end{array}
\right.
\end{align}
It follows, using the fact that the problem is uniformly elliptic without singularities, that solutions $v_\eps$ are classical $C^{2}$ solutions and solve the equation in the classical sense, see \cite[Theorem 15.18]{gilbargt01} and \cite[Theorem 3.3]{LU68}. From the comparison principle we have an estimate
\[
\norm{v_\eps}_{L^\infty(\Omega')}\leq 2\left(\norm{u}_{L^\infty(\Omega)}+\frac{\norm{f}_{L^\infty(\Omega')}}{\lambda}\right),
\]
In Appendix \ref{appendixb} we also show that 
\begin{align}\label{jatiksaha}
&||Dv_\eps||_{L^\infty (\Omega')}\\
&\leq C(p,n,\gamma,\Omega')(||f||^{\frac{1}{1+\gamma}}_{L^{\infty}(\Omega')}+||v_\eps||_{L^{\infty}(\Omega')}+||\bar h_\eps||_{L^{\infty}(\Omega')},\norm{u}_{W^{1,\infty}(\Omega')})\nonumber\\
& \leq C(p,n,\gamma,\Omega')(||f||^{\frac{1}{1+\gamma}}_{L^{\infty}(\Omega)}++||u||_{L^{\infty}(\Omega)}+||f||_{L^{\infty}(\Omega)}),
\end{align}
where we denoted $\bar h_\eps:=\lambda (u-v_\eps)$.

In the sequel we take $\lambda=1$. Consider the operator 
\begin{equation*}
L_{v_{\eps}}(v):=-\Delta v-(p-2) \frac{D^2v Dv_{\eps}\cdot Dv_{\eps}}{| D v_{\eps}|^2+ \eps^2}.
\end{equation*}
The operator $L_{v_{\eps}}$ is uniformly elliptic with $\Lambda_1=\min (1, p-1)$ and $\Lambda_2=\max(1,p-1)$ and satisfies the Cordes condition with $\delta=\delta(p,n)$ for $1<p<3+\dfrac{2}{n-2}$. We have 
$$L_{v_{\eps}} v_{\eps}= F_\eps$$
where $F_\eps:=\bar h_\eps+f_\eps(|Dv_\eps|^2+\eps^2)^{-\gamma/2}$. Notice that $F_\eps$ is uniformly bounded in $L^2(\Omega')$. Indeed, by using \eqref{jatiksaha} we obtain
\begin{align*}
||F_\eps||_{L^2(\Omega')}&=||\bar h_\eps+f_\eps(|Dv_\eps|^2+\eps^2)^{-\gamma/2}||_{L^2(\Omega')}\\
& \leq C(\gamma)(|||D v_\eps|^{-\gamma}f||_{L^2(\Omega')}+||f||_{L^2(\Omega')})+|| u||_{L^2(\Omega')}+||v_\eps||_{L^2(\Omega')}\\
& \leq C(p,n,\gamma)|\Omega'|^{1/2}(||f||^{\frac{1}{1+\gamma}}_{L^\infty(\Omega)}+||f||_{L^\infty(\Omega)}+||u||_{L^\infty(\Omega)}).
\end{align*}

Theorem \ref{cordes} implies that $D^2 v_{\eps}$ are locally uniformly bounded in $L^2(\Omega')$ and for $\Omega''\subset\subset\Omega'$,
\begin{align*}
\int_{\Omega''} |D^2 v_\eps|^2\, dx&\leq C\left(\norm{F_\eps}_{L^2(\Omega')}+\norm{v_\eps}_{L^2(\Omega')}\right)\\
& \leq C(p,n,\gamma)|\Omega'|^{1/2}(||f||^{\frac{1}{1+\gamma}}_{L^\infty(\Omega)}+||f||_{L^\infty(\Omega)}+||u||_{L^\infty(\Omega)}).
\end{align*}
Moreover $\left\{v_{\eps}\right\}$ are equi-H\"older continuous in $\overline{\Omega'}$ since the boundary data is H\"older continuous and the operator is uniformly elliptic with ellipticity constants independent of $\eps$, see \cite[Theorem 1.2]{dfqsing}. By the Arzelà-Ascoli theorem, $v_\eps\to v_\infty$ uniformly in $\overline{\Omega'}$.
It follows from the relaxed limit stability that $v_\infty$ is a viscosity solution to 
\begin{equation}\label{lifer}
-\Delta_p^ Nv_\infty+\lambda v_\infty= f|Dv_\infty|^{-\gamma}+\lambda u,
\end{equation}
with $v_\infty=u$ on $\partial\Omega'$. By the uniqueness of viscosity solutions of \eqref{lifer} (see \cite[Theorem 6.1]{dfqsing} and \cite[Proposition 2.2]{birdemen1}), we conclude that $v_\infty=u$ and hence the estimate holds for $u$.\qed\\

Notice that for $f\equiv 0$, using the result of Lemma \ref{limitequequiv} and the previous result, we have that for  $\gamma>-1$ and $1<p<3+\dfrac{2}{n-2}$, any viscosity solution of $-|Du|^\gamma \Delta_p^N u=0$ belongs to $W^{2,2}_{loc}(\Omega)$.\\

 Note also that using a regularizing problem and using the result of Theorem \ref{cordes}, we can show that for $1<p<3+\dfrac{2}{n-2}$, $p-2\leq q\leq p$ and $f\in L^\infty(\Omega)$, there exists a weak solution $u$ of $-\Delta_p u=f|Du|^q$ such that $u\in W^{2,2}_{loc}(\Omega)\cap C^{1, \alpha}_{loc}(\Omega)$. This result extends the one stated in \cite{leonori}, where $q\geq p/2$ and $1<p<3$.

In the case $\gamma<0$, D. Li and Z. Li \cite{lil17,lil} studied regularity of viscosity solutions of
\begin{equation}\label{yleinen} 
-|D u|^{\gamma}F(D^2u,Du,u,x)=f, 
\end{equation} where $F$ is uniformly elliptic. This equation includes equation \eqref{genpl}. However, both their result and method are different from ours. They prove that there exists some $\delta>0$ depending on the ellipticity constants, dimension and $\gamma$, such that viscosity solutions of \eqref{yleinen} are globally in $W^{2,\delta}(\Omega)$, and the estimate depends on the $L^n$-norm of $f$. Their method is based on an ABP estimate, barrier function method, touching by paraboloids, a localization argument, and a certain covering lemma, which plays a similar role than the standard Calder\'on-Zygmund cube decomposition lemma.

The classical results of fully non linear elliptic equations require a small oscillation condition on the coefficients. Applying the result of Theorem \ref{cafi}, we would also get uniform $W^{2, q}$ estimates for $p$  close to 2 (using that $\beta(x,x_0)\leq 2|p-2|$).
Nevertheless, the results using the Cordes condition gives a more precise range on the values of $p$ where we are granted that $W^{2, 2}$ estimates hold. In our case, since we have a uniform Lipschitz bound for $u$, Theorem 7.4 of \cite{caffarellicabrebook} gives the existence of $\delta=\delta(p,n,\gamma)$ such that for every solution $u$ of \eqref{genpl} with $-1<\gamma\leq 0$ and $p>1$, we have $u\in W^{2,\delta}_{loc}(\Omega)$, and 
\[
||u||_{W^{2,\delta}(\Omega')}\leq C(p,n,\gamma,\Omega')(||f||^{\frac{1}{1+\gamma}}_{L^\infty(\Omega)}+||u||_{L^\infty (\Omega)}).
\]
This provides an alternative proof for the result of Li and Li \cite{lil17} in the special case of equation \eqref{genpl}

\subsection{The case $\gamma>0$ but close to 0 and $p$ close to 2}
In case that $\gamma>0$, we still have some results for  $\gamma<1$ and $|p-2-\gamma|$ close to 0. 
Consider smooth solutions to the following problem,
\begin{align}\label{pertisa}
\left\{
\begin{array}{ll}
-(|Dv_\eps|^2+\eps^2)^{\frac\gamma 2}\left[\Delta v_{\eps}+(p-2) \frac{D^2v_{\eps} Dv_{\eps}\cdot Dv_{\eps}}{| D v_{\eps}|^2+ \eps^2}\right] +\lambda v_\eps=h_\eps &\qquad\text{in}\,\,\Om'\\
v_\eps=u&\qquad\text{on}\,\, \partial\Omega',
\end{array}
\right.
\end{align}
where $h_\eps:=f_\eps+\lambda u$, with $f_\eps$ smooth and converging locally uniformly to $f$.\\
The problem being uniformly elliptic without singularities and the right hand side being $C^1$-continuous, the function $v_\eps$ are in $C^{2, \alpha}$.
The existence of smooth solutions $v_\eps$ is ensured by the classical theory (see \cite[Theorem 15.18]{gilbargt01}).\\

\noindent \emph{Proof of Theorem \ref{th3}}. \\
\noindent{\bf Step 1}: Uniform Lipschitz estimates.\\
Applying the comparison principle for elliptic quasilinear equation in general form  (see \cite[Theorem 10.1]{gilbargt01}), we have that 
\begin{align}\label{kaali}
\norm{v_\eps}_{L^\infty(\Omega')}&\leq C\left(\norm{u}_{L^\infty(\Omega)}+
\dfrac{\norm{h_\eps}_{L^\infty(\Omega')}}{\lambda}\right)\nonumber\\
& \leq C\left(\norm{u}_{L^\infty(\Omega)}+\frac{\norm{f}_{L^\infty(\Omega')}}{\lambda}\right). \end{align}
Next, if $\gamma\leq p-2$ then $v_\eps$ are solutions to 
$$-\text{div}\,\left((|Dv_\eps|^2+\eps^ 2)^{\frac{p-2}{2}}Dv_\eps\right)=(h_\eps-\lambda v_\eps)  (|Dv_\eps|^2+\eps^ 2)^{\frac{p-2-\gamma}{2}}.$$
 Using that the boundary data $u$ is in $C^{1, \alpha}(\overline{\Omega'})$, it follows from \cite[Theorem 1]{lie88} that $v_\eps $ are uniformly bounded in $C^{1,\alpha}(\overline{\Omega'})$.  We also have a  uniform Lipschitz bound  on $v_\eps$ using the Ishii-Lions method in the case where we lack  the divergence structure ($\gamma>p-2$)
\begin{equation}\label{lanttu} 
\norm{Dv_\eps}_{L^\infty(\Omega')}\leq C(p,n,\gamma,\Omega')(\norm{f}_{L^\infty(\Omega)}+\norm{u}_{L^\infty(\Omega)}).
\end{equation} 
The proof of this estimate is provided in the appendix, see Proposition \ref{posga}.

\noindent{\bf Step 2}: Uniform estimates for the Hessian.\\
Equation \eqref{pertisa} can be regarded as a perturbation of the regularized $\gamma+2$-Laplacian. Indeed, we can  rewrite 
\begin{align*}
&-(|Dv_\eps|^2+\eps^2)^{\frac\gamma 2}\left[\Delta v_{\eps}+(p-2) \frac{D^2v_{\eps} Dv_{\eps}\cdot Dv_{\eps}}{| D v_{\eps}|^2+ \eps^2}\right]\\
&=-\text{div}\, \left((|Dv_\eps|^2+\eps^2)^{\frac\gamma 2} Dv_\eps\right)-(p-2-\gamma)(|Dv_\eps|^2+\eps^2)^{\frac \gamma 2}\frac{D^2v_{\eps} Dv_{\eps}\cdot Dv_{\eps}}{| D v_{\eps}|^2+ \eps^2}\\
&=-\dfrac{p-2}{\gamma}\text{div}\, \left((|Dv_\eps|^2+\eps^2)^{\frac\gamma 2} Dv_\eps\right)+\dfrac{p-2-\gamma}{\gamma}(|Dv_\eps|^2+\eps^2)^{\frac\gamma 2}\Delta v_{\eps}.
\end{align*}

Hence we have 
\begin{align}\label{kissanviikset}
-\text{div}\, \left((|Dv_\eps|^2+\eps^2)^{\frac\gamma 2} Dv_\eps\right)=(p-2-\gamma)(|Dv_\eps|^2+\eps^2)^{\frac \gamma 2}\frac{D^2v_{\eps} Dv_{\eps}\cdot Dv_{\eps}}{| D v_{\eps}|^2+ \eps^2}+g_\eps,
\end{align}
where $g_\eps:=f_\eps+\lambda u-\lambda v_\eps$.

Multiplying \eqref{kissanviikset} by a smooth test function $\phi\in C_0^2(\Omega')$ and integrating by parts, we have
\begin{align*}
&\int_{\Om '}\left( |D v_\eps|^2+\eps^2\right)^{\frac{\gamma}{2}} D v_\eps\cdot D\phi\,dx\\
&\qquad=\int_{\Om '} \left((p-2-\gamma)(|Dv_\eps|^2+\eps^2)^{\frac \gamma 2}\frac{D^2v_{\eps} Dv_{\eps}\cdot Dv_{\eps}}{| D v_{\eps}|^2+ \eps^2}+g_\eps\right)\phi\, dx.
\end{align*}
Choosing $D_s\phi$ instead of $\phi$  for $s\in\left\{1,..., n\right\}$ as a test function and integrating by parts, we get 
\begin{align*}
&\int_{\Om '}\left(|D v_\eps|^2+\eps^2\right)^{\frac{\gamma}{2}} \left[\sum_i D_iD_s v_{\eps}D_i\phi+\gamma \underset{i,j}{\sum}\, \dfrac{D_jv_\eps D_jD_sv_{\eps} \, D_iv_{\eps} D_i\phi}{|D v_\eps|^2+\eps^2}\right]
\, dx\\
&=-\int_{\Om '} D_s \phi\left((p-2-\gamma)(|Dv_\eps|^2+\eps^2)^{\frac \gamma 2}\frac{D^2v_{\eps} Dv_{\eps}\cdot Dv_{\eps}}{| D v_{\eps}|^2+ \eps^2}+g_\eps\right) \,dx\\
&=-\int_{\Om '} D_s \phi(p-2-\gamma)(|Dv_\eps|^2+\eps^2)^{\frac \gamma 2}\frac{D^2v_{\eps} Dv_{\eps}\cdot Dv_{\eps}}{| D v_{\eps}|^2+ \eps^2}\,dx+\int_{\Om '} \phi D_s g_\eps  \,dx.
\end{align*}
Denoting $$\tilde{A}^{\eps}:=I+\gamma\dfrac{Dv_{\eps}\otimes Dv_{\eps}}{|Dv_{\eps}|^2+\eps^2}$$
 and $$H(Dv_{\eps}):=\left(|D v_\eps|^2+\eps^2\right)^{\frac12},$$ we have 
\begin{align*}
&\int_{\Om '} H(Dv_{\eps})^{\gamma }\,\underset{i,j}{\sum}\, \tilde{A}^{\eps}_{ij} D_jD_s v_{\eps} D_i\phi\, dx\\
& =-(p-2-\gamma)\int_{\Om '} D_s\phi H(Dv_\eps)^{ \gamma} \frac{D^2v_{\eps} Dv_{\eps}\cdot Dv_{\eps}}{| D v_{\eps}|^2+ \eps^2} \,dx+\int_{\Om '} \phi D_s g_\eps  \,dx.
\end{align*}
For $0<\beta<1$, taking
\[\phi:=\eta^2 (|Dv_{\eps}|^{2}+\eps^2)^{\frac{-\beta}{2}} D_s v_{\eps}, \]
where $\eta\in C_c^{\infty}$ is a non negative cut-off function,
we have 
\begin{align*}
D_i\phi&=\eta^2 H(Dv_{\eps})^{-\beta}D_iD_s v_{\eps} +2\eta D_i\eta H(Dv_{\eps})^{-\beta}D_sv_{\eps}\\
&\quad-\beta \eta^2\sum_l D_iD_l v_{\eps} D_l v_{\eps} H(Dv_{\eps})^{-\beta-2}D_sv_{\eps}.
\end{align*}
Summing up over $s\in\left\{1,..., n\right\}$, it follows that
\begin{align*}
&I_1+I_2+I_3:=\int_{\Om '} \eta^2H(Dv_{\eps})^{\gamma-\beta}\underset{i,j,s}{\sum} \left(\tilde{A}^{\eps}_{ij} D_jD_s v_{\eps} D_iD_sv_{\eps}\right) dx\\
&-\beta\int_{\Om '} \eta^2H(Dv_{\eps})^{\gamma-\beta-2} \underset{i,j,s}{\sum} \left(\tilde{A}^{\eps}_{ij} D_jD_s v_\eps D_sv_{\eps}\sum_l D_iD_l v_{\eps} D_l v_{\eps}\right)dx\\
&+\int_{\Om '}2 \eta H(Dv_{\eps})^{\gamma-\beta}\underset{i,j,s}{\sum}\left( \tilde{A}^{\eps}_{ij} D_jD_s v_\eps D_sv_{\eps}\right) D_i\eta\, dx\\
=&-(p-2-\gamma)\int_{\Om '}\eta^2\underset{s}{\sum}\,D_sD_s v_\eps H(Dv_{\eps})^{\gamma-\beta} \frac{D^2v_{\eps} Dv_{\eps}\cdot Dv_{\eps}}{| D v_{\eps}|^2+ \eps^2}\,dx\\
&+\beta(p-2-\gamma)\int_{\Om '}\eta^2\underset{s,l}{\sum}\, D_s v_\eps D_sD_l v_\eps D_l v_\eps H(Dv_{\eps})^{\gamma-\beta-2} \frac{D^2v_{\eps} Dv_{\eps}\cdot Dv_{\eps}}{| D v_{\eps}|^2+ \eps^2}\,dx\\
&-2(p-2-\gamma)\int_{\Om '}\eta\sum_s D_s\eta D_s v_{\eps}H(Dv_{\eps})^{\gamma-\beta} \frac{D^2v_{\eps} Dv_{\eps}\cdot Dv_{\eps}}{| D v_{\eps}|^2+ \eps^2}\,dx\\
&+\int_{\Omega'} \sum_sD_s g_\eps D_s v_\eps \eta^2H(Dv_\eps)^{-\beta}\, dx\\
=&:II_1+II_2+II_3+II_4.
\end{align*}
That is, 
$$I_1+I_2=II_1+II_2+II_3+II_4-I_3.$$
The estimates of $I_1, I_2, I_3$ are given as follows.

We have 
\begin{align*}
I_1+I_2&=\int_{\Omega'}\eta^2H(D v_\eps)^{\gamma-\beta}\left[|D^2v_\eps|^2-\dfrac{\beta}{4} |D (|Dv_\eps|^2)|^2H(Dv_\eps)^{-2}\right]\, dx\\
&+\underbrace{\dfrac\gamma 4\int_{\Omega'}\eta^2H(Dv_\eps)^{\gamma-\beta-2} \left[|D (|Dv_\eps|^2)|^2-\beta \bigg|Dv_\eps\cdot D(|D v_\eps|^2|)\bigg|^2H^{-1}\right]\, dx}_{\geq 0}, 
\end{align*}
so that for $\gamma> 0$
\begin{align*}
I_1+I_2&\geq (1-\beta)\int_{\Om '} \eta^2H(Dv_{\eps})^{\gamma-\beta}|D^2 v_\eps|^2\, dx\\
&\quad+
\dfrac{(1-\beta)\gamma}{4} \int_{\Omega'}\eta^2H(Dv_\eps)^{\gamma-\beta-2} |D (|Dv_\eps|^2)|^2\, dx.
\end{align*}

Next, using Young's inequality, it follows that
\begin{align*}
\left|2 H(Dv_\eps)^{\gamma-\beta}\eta \underset{i,j,s}{\sum}\left( \tilde{A}^{\eps}_{ij} D_jD_s v_\eps D_sv_{\eps}\right)D_i\eta\right|&\leq 
\delta_1 |D (|Dv_\eps|^2)|^2\eta^2H(Dv_\eps)^{\gamma-\beta-2}\\
&+ C(p,\delta_1) |D\eta|^2 H(Dv_\eps)^{\gamma-\beta+2},
\end{align*}
and hence 
\begin{align*}
|I_3|&\leq \delta_1 \int_{\Omega'}|D (|Dv_\eps|^2)|^2\eta^2H(Dv_\eps)^{\gamma-\beta-2}\, d x\\
&+C(p,\delta_1) \int_{\Om '}  |D\eta|^2 H(Dv_{\eps})^{\gamma-\beta+2}\, d x.
\end{align*}
To estimate the terms $II_1$ and $II_2$, we will use that
$|D^2 v_\eps Dv_\eps, Dv_\eps|\leq |D^2v_\eps||Dv_\eps|^2$ and $|\Delta v_\eps|\leq \sqrt n|D^2 v_\eps|$. We obtain
\begin{align*}
|II_1|&\leq \int_{\Om '}|p-2-\gamma|\sqrt n|D^2v_\eps|^2\eta^2 H(Dv_{\eps})^{\gamma-\beta}\, d x
\end{align*}
\begin{align*}
|II_2|&\leq\beta (p-2-\gamma)^+
\int_{\Om '}|D^2v_\eps|^2\eta^2 H(Dv_{\eps})^{\gamma-\beta}\, d x.
\end{align*}
Using Young's inequality, we can estimate $II_3$ by  
\begin{equation*}
|II_3|\leq \delta_2\int_{\Om '} |D |Dv_\eps|^2|^2\eta^2 H(Dv_{\eps})^{\gamma-\beta}\, d x+C(p,\delta_2, \gamma) \int_{\Om '}  |D\eta|^2 H(Dv_{\eps})^{\gamma-\beta+2}\, d x.
\end{equation*}
Finally we estimate $II_4$ by 
\begin{equation*}
|II_4|\leq\int_{\Omega'} \norm{H(D v_\eps)}_{L^\infty(\Omega')}^{1-\beta}\eta^2|Dg_\eps|\,dx.
\end{equation*}
Choosing $\delta_1$  and $\delta_2$ small enough, such that 
\begin{equation}
(1-\beta)\dfrac{\gamma}{4}-\delta_1-\delta_2=0,
\end{equation}
we obtain 

\begin{align*} 
(1-\beta)\int_{\Om '} \eta^2H(Dv_{\eps})^{\gamma-\beta}|D^2 v_\eps|^2&\leq\beta (p-2-\gamma)^+
\int_{\Om '}|D^2v_\eps|^2\eta^2 H(Dv_{\eps})^{\gamma-\beta}\, d x\\
&+\int_{\Om '}|p-2-\gamma|\sqrt n|D^2v_\eps|^2\eta^2 H(Dv_{\eps})^{\gamma-\beta}\, d x\\
&+ \int_{\Omega'} \norm{H(D v_\eps)}_{L^\infty(\Omega')}^{1-\beta}\eta^2|Dg_\eps|\,dx\\
&+\int_{\Om '} C(p,n, \gamma) |D\eta|^2 H(Dv_{\eps})^{\gamma-\beta+2}\, d x.\\
\end{align*}
Now, if 
\begin{equation}\label{condi3}
1-\beta-\sqrt n|p-2-\gamma|-\beta(p-2-\gamma)^+=\kappa>0,
\end{equation}
 we get
\begin{align*}
\kappa \int_{\Om '} \eta^2H(Dv_{\eps})^{\gamma-\beta}|D^2 v_\eps|^2&\leq \int_{\Omega'} \norm{H(D v_\eps)}_{L^\infty(\Omega')}^{1-\beta}\eta^2|Dg_\eps|\,dx\\
&+\int_{\Om '} C(p,n, \gamma) |D\eta|^2 H(Dv_{\eps})^{\gamma-\beta+2}\, d x.\\
\end{align*}
We fix $\lambda=1$. By using \eqref{kaali} and \eqref{lanttu}, we have
\[
H(Dv_\eps)\leq C(p,n,\gamma,\Omega')(\norm{f}_{L^\infty(\Omega)}+\norm{u}_{L^\infty(\Omega)})
\]
and
\begin{align*}
\int_{\Omega'}|D g_\eps|\, dx&=\int_{\Omega'}|Df_\eps+\lambda Du-\lambda Dv_\eps|\, dx\\
& \leq \int_{\Omega'}|D f|\, dx+\lambda|\Omega'|(||D u||_{L^\infty (\Omega')}+||D v_\eps||_{L^\infty (\Omega')})\\
& \leq C(p,n,\gamma,|\Omega|)(||f||_{W^{1,1}(\Omega)}+||u||_{L^\infty (\Omega)}+||f||_{L^\infty (\Omega)}).
\end{align*}
We take $\eta$ such that $\eta\equiv 1$ on $\Omega''\subset\subset \Omega'$, $0\leq\eta\leq 1$  and $\norm{D\eta}_{L^\infty(\Omega')}\leq C$.
For any $\beta \in(0, 1)$ and $0<\gamma\leq \beta$, assuming that \eqref{condi3} holds and $f\in W^{1,1}(\Omega)\cap C(\Omega)$, we get that $D^ 2v_\eps$ is uniformly bounded in $L^2(\Omega'')$,
\begin{align}\label{limiitti}
&\int_{\Omega''}|D^2v_\eps|^2\, dx\leq C\left(n,p, \gamma, \beta,d,d', \norm{f}_{W^{1, 1}(\Omega)},\norm{f}_{L^\infty(\Omega)}, \norm{u}_{L^\infty(\Omega)}\right).
\end{align}
If $\gamma\geq 0$ and $p>1$, then using Proposition \ref{appendix1} below together with \cite[Theorem 1]{gripen}, we see that $v_\eps$ are uniformly bounded in $C^{\overline{\alpha}}(\overline\Omega')$. It follows from the Arzelà-Ascoli theorem, that up to a subsequence $v_\eps$ converges to some function $v$. Using the stability result, we have that $v$ is a viscosity solution of 
\begin{equation}\label{uniikki}
-|Dv|^\gamma \Delta^N_p v+\lambda v=f+\lambda u 
\end{equation}
in $\Omega'$ with $v=u$ on $\partial\Omega'$. By uniqueness of solutions of \eqref{uniikki} for $\lambda>0$ (see \cite[Theorem 1.1]{birdem1}), we conclude that $u=v$ in $\Omega'$. Passing to the limit in \eqref{limiitti}, we get the desired local $W^{2,2}$-estimate for $u$, so the proof of Theorem \ref{th3} is complete. \qed

\begin{remark}
For $\gamma=p-2$, $2<p<3$ and $f\in W^{1,1}_{loc}(\Omega)\cap C(\Omega)$, we recover the $W^{2,2}_{loc}(\Omega)$ regularity for weak solutions of the $p$-Poisson problem $-\Delta_p u=f$.   
\end{remark}

\begin{appendix}
\section{Proof of Lemma \ref{prop1}}\label{appi1}
In the proof we denote by $S^n$ the set of symmetric $n\times n$ matrices. For $\eta,\xi\in \R^n$, we denote by $\eta\otimes \xi$ the $n\times n$-matrix for which $(\eta\otimes \xi)_{ij}=\eta_i \xi_j$.  For $n\times n$ matrices we use the matrix norm
\[
||A||:=\sup_{|x|\leq 1}\{|Ax|\}.
\] 

\noindent \emph{Proof of Lemma \ref{prop1}}
	We use the viscosity method introduced by Ishii and Lions in \cite{ishiilions}.
Let $\gamma>0$	and let $w$ be a solution to \eqref{devia} with $\osc_{B_1} (w)\leq 1$ and $\norm{f}_{L^\infty(B_1)}\leq 1$.
We are going to show that $w$ is H\"older in $B_{3/4}$, and this will imply that $w$ is H\"older in any smaller ball $B_\rho$ for $\rho\in \left(0,\frac34\right)$  with the same H\"older constant.
First observe that $\tilde w(x):=w(x)+q\cdot x$ is a solution to \eqref{genpl}.
Proceeding as in the Appendix \ref{appendixb}, we can first show that $\tilde w$ is Lipschitz continuous  and that for $x, y\in Q_{7/8}$, we have
$$|\tilde w(x)-\tilde w(y)|\leq C(p,n,\gamma)(1+\norm{\tilde w}_{L^\infty(B_1)}+\norm{f}_{L^\infty(B_1)})|x-y|.$$
This implies that $w$ is Lipschitz continuous in $B_{7/8}$ and that
$$|w(x)-w(y)|\leq C(p,n,\gamma)(1+\norm{w}_{L^\infty(B_1)}+\norm{f}_{L^\infty(B_1)}+|q|)|x-y|.$$
Hence if $|q|\geq 2+\norm{w}_{L^\infty(B_1)}+\norm{f}_{L^\infty(B_1)}:=\Gamma_0(p,n, \gamma)$, then for $x,y\in B_{7/8}$ it holds
\begin{equation}\label{kolmatit}
|w(x)-w(y)|\leq \bar c_1(p,n,\gamma)|q||x-y|.\end{equation}
Now we will provide uniform Hölder estimates using  the estimate \eqref{kolmatit}.
 We fix $x_0, y_0\in B_{\frac{3}{4}}$, and consider the auxiliary function 
	$$\Phi(x, y):=w(x)-w(y)-L\phi(\abs{x-y})-\frac M2\abs{x-x_0}^2-\frac M2\abs{y-y_0}^2,$$
	where $
	\phi(t)= t^\beta
	$
	for some  $0<\beta<1$. Our goal is to show that $\Phi(x, y)\leq 0$ for $(x,y)\in B_r\times B_r$, where  $r=\frac 34$. We will argue by contradiction, write two viscosity inequalities and combine them in order to get a contradiction using the second order terms and the strict ellipticity in the gradient direction.

	We argue by contradiction and assume that for all $L>0$, $M>0$ and $\beta\in (0,1)$, 
	$\Phi$ has a positive
	maximum. We assume that the maximum is reached at some point $(x_1, y_1)\in \bar B_r\times \bar B_r$.
	Since $w$ is  continuous and bounded, we get 
	\begin{equation} 
	\begin{split}
	M\abs{x_1-x_0}^2\leq 2\norm{w}_{L^\infty(B_1)},\\
	M\abs{y_1-y_0}^2\leq 2\norm {w}_{L^\infty(B_1)}.
	\end{split}
	\end{equation}
	Notice  that $x_1\neq y_1$, otherwise the maximum of $\Phi$ would be non positive.
	Taking $M\geq\norm{w}_{L^\infty(B_1)}\left(\dfrac{32}{r}\right)^2$, we have $|x_1-x_0|< r/8$ and $|y_1-y_0|< r/8$ so that $x_1$ and $y_1$ are in $B_r$.

	Next we apply Jensen-Ishii's lemma to $w(x)-\frac M2\abs{x-x_0}^2$ and  $w(y)+\frac M2\abs{y-y_0}^2$.
		By  Jensen-Ishii's lemma (also known as theorem of sums, see \cite[Theorem 3.2]{crandall1992user}), there exist
		\[
		\begin{split}
		&(\zeta_x,X)\in \overline{\mathcal{J}}^{2,+}\left(w(x_1)-\frac M2\abs{x_1-x_0}^2\right),\\
		&( \zeta_y,Y)\in \overline{ \mathcal{J}}^{2,-}\left(w(y_1)+\frac M2\abs{y_1-y_0}^2\right),
		\end{split}
		\]
		that is 
		\[
		\begin{split}
		&(a,X+MI)\in \overline{\mathcal{J}}^{2,+}w(x_1),\\ &(b,Y-MI)\in \overline{\mathcal{J}}^{2,-}w(y_1),
		\end{split}
		\]
		where ($\zeta_x=\zeta_y$)
		\[
		\begin{split}
		a&=L\phi'(|x_1-y_1|) \frac{x_1-y_1}{\abs{x_1-y_1}}+M(x_1-x_0)=\zeta_x+M(x_1-x_0),\\
		b&=L\phi'(|x_1-y_1|) \frac{x_1-y_1}{\abs{x_1-y_1}}-M(y_1-y_0)= \zeta_y-M(y_1-y_0).
		\end{split}
		\]
		If $L$ is large enough (depending on $M$, actually since $|x_1-x_0|\leq 2$, $ |y_1-y_0|\leq 2$,$ |x_1-y_1|\leq 2$, it is enough that $L>\frac M\beta 2^{4-\beta}$), we have
		\[
		2L\beta \abs{x_1-y_1}^{\beta-1}\geq\abs{a}\geq L\phi'(|x_1-y_1|) - M\abs{x_1-x_0}\ge \frac L2 \beta \abs{x_1-y_1}^{\beta-1}.
		\]
		\[
		2L\beta\abs{x_1-y_1}^{\beta-1}\geq\abs{b}\geq L\phi'(|x_1-y_1|) - M\abs{y_1-y_0}\ge \frac L2 \beta \abs{x_1-y_1}^{\beta-1}.
		\]
\noindent Moreover, by  Jensen-Ishii's lemma  \cite{crandnote}, for any $\tau>0$, we can take $X, Y\in \mathcal{S}^n$ such that for all $\tau>0$ such that $\tau B<I$, we have
		\begin{equation}\label{maineq1}
		-\frac{2}{\tau} \begin{pmatrix}
		I&0\\
		0&I 
		\end{pmatrix}\leq
		\begin{pmatrix}
		X&0\\
		0&-Y 
		\end{pmatrix}\leq \begin{pmatrix} B^\tau& -B^\tau\\
		-B^\tau& B^\tau\end{pmatrix}
		\end{equation}
	where 
		\begin{align*}	
		B=&L\phi''(|x_1-y_1|) \frac{x_1-y_1}{\abs{x_1-y_1}}\otimes \frac{x_1-y_1}{\abs{x_1-y_1}}\\
		&\quad +\frac{L\phi'(|x_1-y_1|)}{\abs{x_1-y_1}}\Bigg( I- \frac{x_1-y_1}{\abs{x_1-y_1}}\otimes \frac{x_1-y_1}{\abs{x_1-y_1}}\Bigg)\\
		&=L\beta\abs{x_1-y_1}^{\beta-2}\left(I+(\beta-2)\frac{x_1-y_1}{\abs{x_1-y_1}}\otimes \frac{x_1-y_1}{\abs{x_1-y_1}}\right)
		\end{align*}
		and $$B^\tau= (I-\tau B)^{-1}B.$$
		For $\tau=\frac{1}{2L\beta\abs{x_1-y_1}^{\beta-2}}$, we have 
		\begin{align*}	
		B^\tau=(I-\tau B)^{-1} B=2L\beta\abs{x_1-y_1}^{\beta-2}\left(I-2\frac{2-\beta}{3-\beta} \frac{x_1-y_1}{\abs{x_1-y_1}}\otimes \frac{x_1-y_1}{\abs{x_1-y_1}}\right).
\end{align*}	
		Notice that  for $\xi=\frac{x_1-y_1}{\abs{x_1-y_1}}$, we have
	
				\begin{equation}\label{oufi}
		\langle B^\tau \xi,\xi\rangle= 2L\beta\abs{x_1-y_1}^{\beta-2}\left(\frac{\beta-1}{3-\beta}\right)<0.
		\end{equation}
	
		Applying the inequality \eqref{maineq1} to any  vector $(\xi,\xi)$ with $\abs{\xi}=1$, we  have that $X- Y\leq 0$ and 
		\begin{equation}\label{gilout}
		\norm{X},\norm{Y}\leq 2L\beta\abs{x_1-y_1}^{\beta-2}.
		\end{equation}
		  The reader can find more details in \cite{crandnote,crandall1992user,ishiilions}.
		 Using the positivity of the maximum of $\Phi$ and the Lipschitz estimate of $w$ (see \eqref{kolmatit}), we have for $0<\beta\leq \frac{1}{4\bar c_1}$,
		\begin{equation}\label{pogba}2\beta L \abs{x_1-y_1}^{\beta-1}\leq 2\beta \frac{|w(x_1)-w(y_1)|}{|x_1-y_1|}\leq 2\beta \bar c_1|q|\leq\frac{ |q|}{2}.
		\end{equation}
		Denoting $\eta_1=a+q$, $\eta_2=b+q$, we get
		\begin{align}\label{koivu}
		2|q|\geq\abs{\eta_1}&\geq \abs{q}-|a|\geq \frac{\abs{q}}{2}\geq 2L \beta \abs{x_1-y_1}^{\beta-1},\nonumber\\
			2|q|\geq\abs{\eta_2}&\geq \abs{q}-|b|\geq \frac{\abs{q}}{2}\geq 2L \beta \abs{x_1-y_1}^{\beta-1}.
		\end{align}
	Notice also that $|\eta_i|\geq |q|/2\geq 1$.
		The  viscosity inequalities read as
		\begin{equation*}
		\begin{split}
		-f(x_1)&\leq |\eta_1|^\gamma\left[ \tr (X+MI)+(p-2)\dfrac{\left\langle(X+MI) (a+q), (a+q)\right\rangle}{|a+q|^2}\right],\\
		-f(y_1)&\geq |\eta_2|^\gamma\left[ \tr(Y-MI)+(p-2) \dfrac{\left\langle(Y-MI) (b+q), (b+q)\right\rangle}{|b+q|^2}\right].
		\end{split}
		\end{equation*}
	 In other words, 
		\begin{equation*}
		\begin{split}
		-f(x_1)|\eta_1|^{-\gamma}&\leq  \tr (A(\eta_1)(X+MI))\\
		f(y_1)|\eta_2|^{-\gamma}&\leq  -\tr (A(\eta_2)(Y-MI))
		\end{split}
		\end{equation*}
		where  for $\eta \neq 0$ $\bar \eta=\dfrac{\eta}{|\eta |}$ and 
		\[A(\eta):= I+(p-2)\ol\eta\otimes \ol\eta.\]
		Adding the two inequalities, we obtain 
	\begin{equation*}
		\left[f(y_1)|\eta_2|^{-\gamma}-f(x_1)|\eta_1|^{-\gamma} \right]\leq  \tr (A(\eta_1)(X+MI))
		-\tr (A(\eta_2)(Y-MI)).
		\end{equation*}
		It results (using that $|\eta_i|^{-\gamma}\leq 1$) that 
		\begin{align}\label{gregory1}
		-2||f||_{L^\infty(B_1)}  \leq  &\underbrace{\tr (A(\eta_1)(X-Y))}_{(I)}\nonumber\\
		&		+\underbrace{tr ((A(\eta_1)-A(\eta_2))Y)}_{(II)}\nonumber\\
		&+\underbrace{M\big[\tr (A(\eta_1))+\tr (A(\eta_2))}_{(III)} \big].
		\end{align}
		
\noindent\textbf{Estimate of (I)}.		Remark that all the eigenvalues of $X-Y$ are non positive. Applying the previous matrix inequality \eqref{maineq1} to the vector $(\xi,-\xi)$ where $\xi:=\frac{x_1-y_1}{|x_1-y_1|}$  and using \eqref{oufi}, 
		we obtain
		\begin{align}\label{camille}
		\langle (X-Y) \xi, \xi\rangle&\leq 4\langle B^\tau \xi,\xi\rangle\leq 8L\beta\abs{x_1-y_1}^{\beta-2}\left(\frac{\beta-1}{3-\beta}\right)<0.
		\end{align}
		This means that  at least one of the eigenvalue of $X-Y$  that we denote by  $\lambda_{i_0}$ is   negative and smaller than $8L\beta\abs{x_1-y_1}^{\beta-2}\left(\frac{\beta-1}{3-\beta}\right)$. The eigenvalues of $A(\eta_1)$ belong to $[\min(1, p-1), \max(1, p-1)]$.	Using \eqref{camille}, it results that 
		\begin{align*}  
		\tr(A(\eta_1) (X-Y))&\leq \sum_i \lambda_i(A(\eta_1))\lambda_i(X-Y)\\
		&\leq \min(1, p-1)\lambda_{i_0}(X-Y)\\
		&\leq \min(1, p-1)8L\beta\abs{x_1-y_1}^{\beta-2}\left(\frac{\beta-1}{3-\beta}\right).
		\end{align*}
		
\noindent\textbf{Estimate of (II)}.		We have
		\begin{align*}A(\eta_1)-A(\eta_2)&=(\ol\eta_1\otimes \ol\eta_1-\ol\eta_2\otimes \ol\eta_2)(p-2)\\	
	&=[(\ol\eta_1-\ol\eta_2)\otimes\ol\eta_1
	-\ol\eta_2\otimes(\ol\eta_2-\ol\eta_1)](p-2)
		\end{align*}
and it follows that
		\begin{align*}
		\tr( (A(\eta_1)-A(\eta_2)) Y)&\leq n\norm{Y}
		\norm{A(\eta_1)-A(\eta_2)}  \\
		&\leq n\abs{p-2}\norm{Y}|\ol\eta_1-\ol\eta_2|\left( |\ol\eta_1|+|\ol\eta_2|\right)\\
		&\leq 2n\abs{p-2}\norm{Y}|\ol \eta_1-\ol\eta_2|.
		\end{align*}
		By using  \eqref{koivu} and $|\eta_1-\eta_2|\leq 4 M$, we have
		\begin{equation*}
		\begin{split}
		\abs{\ol \eta_1-\ol \eta_2}&=
		\abs{\frac{\eta_1}{\abs {\eta_1}}-\frac{\eta_2}{\abs {\eta_2}}}
\le \max\left( \frac{\abs{\eta_2- \eta_1}}{\abs{\eta_2}},\frac{ \abs{\eta_2- \eta_1}}{\abs{\eta_1}}\right)\\
		&\le \frac {16  M}{ \beta L \abs{x_1-y_1}^{\beta-1}}.
		\end{split}
		\end{equation*}

		Combining the previous estimate with \eqref{gilout}, we obtain
	\begin{align*}
	 \tr( (A(\eta_1)-A(\eta_2)) Y)&\leq 64n\abs{p-2}M \abs{x_1-y_1}^{-1}.
	 \end{align*}
		 \textbf{Estimate of (III)}. We have
		$$ M(\tr(A(\eta_1))+\tr(A(\eta_2)))\leq 2Mn\max(1, p-1).$$

		Gathering the previous estimates with \eqref{gregory1}, we get 
		\begin{align*}
		0&\leq 2\norm{f}_{L^\infty(B_1)} + 64n\abs{p-2}M \abs{x_1-y_1}^{-1}\\
		& +2Mn\max(1, p-1)+\min(1, p-1)8L\beta\abs{x_1-y_1}^{\beta-2}\left(\frac{\beta-1}{3-\beta}\right).\\
		\end{align*}
		 Choosing $L$ satisfying $$L\geq C(p,n,\gamma)(M+ \norm{f}_{L^\infty(B_1)}+1)\geq C(p,n,\gamma)(\norm{w}_{L^\infty(B_1)}+\norm{f}_{L^\infty(B_1)}+1),$$ we get
		$$ 0\leq \dfrac{\min(1, p-1)\beta(\beta-1)}{200(3-\beta)} L\abs{x_1-y_1}^{\beta-2}<0,   $$
		which is  a contradiction and   hence $\Phi(x,y)\leq 0$ for $(x,y)\in B_r\times B_{r}$. The desired result follows since for $x_0,y_0\in B_{\frac{3}{4}}$, we have $\Phi(x_0,y_0)\leq 0$, so we get
		\[
		|w(x_0)-w(y_0)|\leq L|x_0-y_0|^\beta.
		\]
		We conclude that $w$ is H\"older continuous  in $B_{\frac34}$ and 
		$$\abs{w(x)-w(y)}\le C(n,p,\gamma)\left(\norm{w}_{L^\infty(B_1)}+\norm{f}_{L^\infty(B_1)}+1\right) \abs{x-y}^\beta.\qedhere$$

\section{Uniform estimates for the approximating problem}\label{appendixb}

\begin{proposition}\label{appendix1}
Let $v_\eps$
be a smooth  solution  of \eqref{pertisa}
with $\gamma\in [0, \infty)$, $p\in (1, \infty)$ and $\eps \in (0,1)$.
Then
there exists a positive constants $\overline{\alpha}>0$ and $C_{\overline{\alpha}}=C_{\overline{\alpha}}(n, p, \gamma, \norm{h_\eps}_{L^\infty(\Omega')}, \norm{v_\eps}_{L^\infty(\Omega')},\norm{u}_{C^\alpha(\Omega')})$
such that for every $x, y\in \Omega'$, we have
\begin{equation}
|v_\eps(x)-v_\eps (y)|\leq C_{\overline{\alpha}}|x-y|^{\overline{\alpha}}.
\end{equation}
\end{proposition}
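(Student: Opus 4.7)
The plan is to obtain the $\eps$-uniform H\"older modulus by an interior estimate via the Ishii-Lions doubling of variables method combined with a boundary estimate via barrier functions, the latter exploiting the H\"older regularity of $u$ on $\partial\Om'$ and the smoothness of $\partial\Om'$. Throughout, observe that the boundary datum $u$ is in $C^{1,\a}(\overline{\Om'})$ (hence $C^\a(\overline{\Om'})$) by Theorem \ref{th1} applied on $\Om'\subset\subset\Om$.

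For the interior estimate, fix a ball $B_r(z)\subset\Om'$ and two points $x_0,y_0\in B_{r/2}(z)$, and introduce on $\overline{B_r(z)}\times\overline{B_r(z)}$ the penalty function
\[
\Phi(x,y)=v_\eps(x)-v_\eps(y)-L\,|x-y|^{\overline{\a}}-\tfrac{M}{2}\bigl(|x-x_0|^2+|y-y_0|^2\bigr),
\]
with $L,M$ large. Arguing by contradiction that $\Phi$ has a positive maximum at $(x_1,y_1)$, the $M$-terms force $(x_1,y_1)$ to lie in the interior of $B_r(z)\times B_r(z)$ and $x_1\neq y_1$. Jensen-Ishii's lemma then produces, as in the proof of Lemma \ref{prop1}, approximate gradients $a,b$ with $|a|,|b|\ge\tfrac12 L\overline{\a}|x_1-y_1|^{\overline{\a}-1}$ and symmetric matrices $X,Y\in\mathcal{S}^n$ whose difference is strongly negative in the direction $\xi=(x_1-y_1)/|x_1-y_1|$. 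Subtracting the two classical equations at $x_1$ and $y_1$, dividing through by the respective prefactors $(|a|^2+\eps^2)^{\gamma/2}$ and $(|b|^2+\eps^2)^{\gamma/2}$, and using that $A_\eps(\eta):=I+(p-2)\frac{\eta\otimes\eta}{|\eta|^2+\eps^2}$ has eigenvalues in $[\min(1,p-1),\max(1,p-1)]$, the dominant term
\[
\tr\bigl(A_\eps(a)(X-Y)\bigr)\le -c(p)\,L\overline{\a}(1-\overline{\a})\,|x_1-y_1|^{\overline{\a}-2}
\]
beats the bounded contributions coming from the $MI$ shifts, from the difference $A_\eps(a)-A_\eps(b)$, and from the lower-order terms $\lambda v_\eps$ and $h_\eps$, provided $L$ is chosen large compared to $1+\norm{h_\eps}_{L^\infty(\Om')}+\norm{v_\eps}_{L^\infty(\Om')}+M$. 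The contradiction yields the interior H\"older estimate with constant independent of $\eps$.

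For the boundary estimate at $x_0\in\partial\Om'$, flatten $\partial\Om'$ locally by a $C^2$ diffeomorphism and consider barriers $\psi^\pm(x)=u(x_0)\pm K\,|x-x_0|^{\overline{\a}}$, corrected if necessary by a term linear in the inward normal coordinate to enforce the boundary ordering against $v_\eps$. Since $|D\psi^\pm|\sim|x-x_0|^{\overline{\a}-1}\to\infty$ as $x\to x_0$, the prefactor $(|D\psi^\pm|^2+\eps^2)^{\gamma/2}$ is controlled from below uniformly in $\eps$ near $x_0$; a direct computation then shows that $\psi^\pm$ are, respectively, a super- and a subsolution of \eqref{pertisa} in a small half-ball at $x_0$, for $K$ depending only on $n,p,\gamma,\norm{h_\eps}_{L^\infty(\Om')},\norm{u}_{C^\a(\overline{\Om'})}$ and the $C^2$ geometry of $\partial\Om'$. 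The comparison principle of \cite[Theorem 10.1]{gilbargt01}, which applies to \eqref{pertisa} since the operator is quasilinear, uniformly elliptic for fixed $\eps>0$, and proper in $v_\eps$, then gives $|v_\eps(x)-u(x_0)|\le K\,|x-x_0|^{\overline{\a}}$ in a neighborhood of $x_0$. A standard covering argument combines the boundary and interior estimates into a global H\"older bound on $\overline{\Om'}$.

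The main obstacle is the $\eps$-uniformity of the estimate: the operator in \eqref{pertisa} is uniformly elliptic for each fixed $\eps>0$, but its ellipticity constants degenerate as $\eps\downarrow 0$ on the set $\{|Dv_\eps|\lesssim\eps\}$, so the linear Schauder-type theory cannot be invoked with constants independent of $\eps$. The Ishii-Lions argument circumvents this by producing, at the contact point, an \emph{a priori} lower bound on the approximate gradients of the form $|a|,|b|\ge\tfrac12 L\overline{\a}|x_1-y_1|^{\overline{\a}-1}$; for $\gamma\ge 0$ the degenerate prefactor $(|a|^2+\eps^2)^{\gamma/2}\ge|a|^\gamma$ is then a source of extra ellipticity rather than an obstruction, and this is precisely what makes the final constant $C_{\overline{\a}}$ independent of $\eps$.
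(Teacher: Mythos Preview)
Your interior argument via Ishii--Lions doubling is correct and gives an $\eps$-uniform H\"older bound; the key observation that $|a|,|b|\gtrsim L|x_1-y_1|^{\overline{\a}-1}\ge 1$ for $L$ large, so that dividing by $(|\cdot|^2+\eps^2)^{\gamma/2}\ge 1$ leaves a bounded right-hand side, is exactly what is needed when $\gamma\ge 0$. The paper, however, takes a much shorter route: it observes that whenever $|\eta|\ge 1$ the operator in \eqref{pertisa} is trapped between Pucci extremals with constants $\min(1,p-1),\max(1,p-1)$ independent of $\eps$, and then invokes the ready-made theory of \cite{imbabp,delarue,imbersillarge} (Krylov--Safonov type estimates that hold only where the gradient is large) for the interior, and \cite{gripen} for the boundary. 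Your approach is more self-contained; theirs is a two-line citation.

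There is a genuine gap in your boundary step. For the radial barrier $\psi(x)=K|x-x_0|^{\overline{\a}}$ one computes
\[
\Delta_p^N\psi(x)=K\overline{\a}\,|x-x_0|^{\overline{\a}-2}\bigl(n-p+(p-1)\overline{\a}\bigr),
\]
and the analogous regularized quantity has the same sign near $x_0$. Thus $-\Delta_p^N\psi\ge 0$ forces $\overline{\a}<\frac{p-n}{p-1}$, which is impossible when $p\le n$. In that range your $\psi^+$ is a \emph{sub}solution, not a supersolution, and the comparison argument collapses; the linear correction in the normal direction that you mention fixes the boundary ordering but does not repair the sign of the operator. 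A barrier that works uniformly in $p>1$ must exploit the exterior ball condition (e.g.\ $C(R^{-\beta}-|x-z_0|^{-\beta})$ with $z_0$ the center of an exterior ball, as in \cite{birdem5} and the later Proposition \ref{posga}), or one simply cites \cite{gripen} as the paper does.
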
   
\begin{proof}
The uniform interior estimate follows from \cite[Theorem 1.1]{imbabp}, 
\cite[Theorem 1.1]{delarue} and \cite[Corollary 2]{imbersillarge} since the operator $$F(x,s,\eta,X):= -(|\eta|^2+\eps^2)^{\frac\gamma 2}\left[\tr(X)+(p-2) \frac{X \eta\cdot \eta}{| \eta|^2+ \eps^2}\right] +\lambda s-h_\eps $$ satisfies
\begin{eqnarray}
\left. \begin{array}{r} |\eta| \ge 1 \\
F(x,s,\eta,X) \geq 0 \end{array} \right\}
\Rightarrow  
 \mathcal{M}^+ (X) + \lambda s + |h_\eps(x)| \ge 0 \, ,\\
\left. \begin{array}{r} |\eta| \ge 1 \\ 
F(x,s,\eta,X) \leq 0 \end{array} \right\}
\Rightarrow  \mathcal{M}^- (X)  + \lambda s - |h_\eps(x)| \le 0.
\end{eqnarray}
The up to the boundary H\"older estimate follows from \cite[Theorem 1]{gripen}. 

\end{proof}

We are going to make use of the above H\"older estimate and the
Ishii-Lions’ method \cite{crandall1992user}      again
to prove the following Lipschitz estimate.

\begin{proposition}\label{posga}
Let $v_\eps$
be a smooth  solution  of \eqref{pertisa}
with $\gamma\in [0, \infty)$, $p\in (1, \infty)$ and $\eps \in (0,1)$.
Then
there exists a positive constant $C=C(n, p, \gamma, \norm{h_\eps}_{L^\infty(\Omega')}, \norm{v_\eps}_{L^\infty(\Omega')},\norm{u}_{W^{1,\infty}(\Omega')})$
such that for every $x, y\in \Omega'$, we have
\begin{equation}
|v_\eps(x)-v_\eps (y)|\leq C|x-y|.
\end{equation}
\end{proposition}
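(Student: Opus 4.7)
The plan is to adapt the Ishii--Lions doubling-of-variables argument from the proof of Lemma~\ref{prop1} in Appendix~\ref{appi1}, with the linear penalty $\phi(t)=t$ in place of $\phi(t)=t^\beta$. Since $v_\eps$ is a classical $C^2$ solution, first- and second-order conditions at the maximizer can be read off directly, without appealing to Jensen--Ishii's theorem of sums; the genuine new ingredient compared with Lemma~\ref{prop1} is handling points on $\partial\Omega'$, where the boundary data $v_\eps=u$ enters.

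I would fix arbitrary $x_0,y_0\in\overline{\Omega'}$ and consider
$$\Phi(x,y)=v_\eps(x)-v_\eps(y)-L|x-y|-\tfrac{M}{2}|x-x_0|^2-\tfrac{M}{2}|y-y_0|^2$$
on $\overline{\Omega'}\times\overline{\Omega'}$, where $L,M>0$ are chosen at the end depending on the data. The aim is to show $\Phi\le 0$, so that evaluating at $(x_0,y_0)$ and exchanging the roles of $x_0,y_0$ yields $|v_\eps(x_0)-v_\eps(y_0)|\le L|x_0-y_0|$. Arguing by contradiction, suppose $\Phi$ attains a positive maximum at some $(x_1,y_1)$ with $x_1\ne y_1$. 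The quadratic penalties and the $L^\infty$ bound \eqref{kaali} give $M|x_1-x_0|^2,\,M|y_1-y_0|^2\le 4\norm{v_\eps}_{L^\infty(\Omega')}$, and Proposition~\ref{appendix1} forces $L|x_1-y_1|^{1-\overline\alpha}\le C_{\overline\alpha}$, so $|x_1-y_1|$ is small when $L$ is large.

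In the interior case $(x_1,y_1)\in\Omega'\times\Omega'$, the first-order conditions give $Dv_\eps(x_1)=L\tfrac{x_1-y_1}{|x_1-y_1|}+M(x_1-x_0)$ and $Dv_\eps(y_1)=L\tfrac{x_1-y_1}{|x_1-y_1|}-M(y_1-y_0)$, so $|Dv_\eps(x_1)|,|Dv_\eps(y_1)|\approx L$ once $L\gg M$; thus the weight $(|Dv_\eps|^2+\eps^2)^{\gamma/2}$ is uniformly comparable to $L^\gamma$ at both points. The second-order condition produces matrices $X=D^2v_\eps(x_1)-MI$ and $Y=D^2v_\eps(y_1)+MI$ satisfying the matrix inequality analogous to \eqref{maineq1} with the explicit $B$ coming from $\phi(t)=t$. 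Subtracting the two equalities \eqref{pertisa} at $x_1,y_1$ and following the computation that led to \eqref{gregory1}, the dominant term $\tr(A(\eta_1)(X-Y))$, bounded above by $\min(1,p-1)\langle (X-Y)\xi,\xi\rangle$ with $\xi=\tfrac{x_1-y_1}{|x_1-y_1|}$ (evaluated via the matrix inequality on $(\xi,-\xi)$), contributes $-c(p)L|x_1-y_1|^{-1}$ and dominates the error terms of order $nM|p-2||x_1-y_1|^{-1}$, $Mn\max(1,p-1)$, and $2\norm{h_\eps}_{L^\infty(\Omega')}$. Choosing $L\ge C(n,p,\gamma)(M+\norm{h_\eps}_{L^\infty(\Omega')}+1)$ closes the interior contradiction exactly as in Lemma~\ref{prop1}.

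The main obstacle is the boundary case. If both $x_1,y_1\in\partial\Omega'$, the identity $v_\eps=u$ there gives
$$v_\eps(x_1)-v_\eps(y_1)=u(x_1)-u(y_1)\le \norm{u}_{W^{1,\infty}(\Omega')}|x_1-y_1|,$$
which contradicts $\Phi(x_1,y_1)>0$ as soon as $L\ge 2\norm{u}_{W^{1,\infty}(\Omega')}$. When only $x_1\in\partial\Omega'$ and $y_1\in\Omega'$, the point $y_1$ is an interior maximizer of $y\mapsto\Phi(x_1,y)$: smoothness of $v_\eps$ forces $|Dv_\eps(y_1)|\approx L$ together with a one-sided Hessian bound, whose insertion into the classical equation \eqref{pertisa} combined with the boundary identity $v_\eps(x_1)=u(x_1)$ and the inequality $u(x_1)-v_\eps(y_1)>L|x_1-y_1|$ yields the contradiction provided $L$ also absorbs $\norm{u}_{W^{1,\infty}(\Omega')}$. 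Collecting all thresholds into a single $L=L(n,p,\gamma,\norm{h_\eps}_{L^\infty(\Omega')},\norm{v_\eps}_{L^\infty(\Omega')},\norm{u}_{W^{1,\infty}(\Omega')})$ gives the claimed uniform Lipschitz estimate.
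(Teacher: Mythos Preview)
There is a genuine gap in the interior argument. With the linear penalty $\phi(t)=t$ you have $\phi''\equiv 0$, so the Hessian of the doubled penalty in the direction $\xi=\dfrac{x_1-y_1}{|x_1-y_1|}$ satisfies $\langle B\xi,\xi\rangle=L\phi''(|x_1-y_1|)=0$. The matrix inequality applied to $(\xi,-\xi)$ then only yields $\langle(X-Y)\xi,\xi\rangle\le 0$, not a strictly negative quantity of order $-c(p)L|x_1-y_1|^{-1}$ as you claim. Without a strictly negative dominant term there is nothing to absorb the positive error terms $nM|p-2|\,|x_1-y_1|^{-1}$ and $Mn\max(1,p-1)$, and the contradiction does not close. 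This is precisely why the paper does \emph{not} take $\phi(t)=t$: it uses
\[
\phi(t)=t-\phi_0\,t^{\delta},\qquad \delta=1+\tfrac{\overline\alpha}{2}\in(1,2),
\]
so that $\phi''(t)=-\delta(\delta-1)\phi_0\,t^{\delta-2}<0$ supplies the crucial negative term $2\min(1,p-1)L\phi''(|x_1-y_1|)$. The H\"older estimate from Proposition~\ref{appendix1}, which you mention but do not actually exploit, is used in the paper to obtain the refined localization bound \eqref{kulio}, namely $M|x_1-x_0|,\,M|y_1-y_0|\le C_{\overline\alpha}|x_1-y_1|^{\overline\alpha/2}$; this is what allows the choice $\delta=1+\overline\alpha/2$ to balance the error $(II)$ against the main negative term.

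Your boundary treatment is also different from the paper and, as written, incomplete. The mixed case $x_1\in\partial\Omega'$, $y_1\in\Omega'$ gives only a one-sided second-order condition at $y_1$; inserting that into the \emph{equality} \eqref{pertisa} does not by itself produce a contradiction, and you do not explain how the boundary information at $x_1$ combines with the PDE at $y_1$ to force one. The paper avoids this issue entirely: the interior doubling argument is carried out on balls (after scaling), and the estimate up to $\partial\Omega'$ is obtained separately by a barrier construction of the form $\bar u=\bar v+\bar w_\delta$, with $\bar v$ solving a Pucci problem with boundary data $u$ and $\bar w_\delta$ built from the distance function, following \cite{birdem5}.
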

\begin{proof}
First we provide an interior estimate. Using a scaling and a translation argument combined  with a covering argument, it is enough to prove the Lemma in the unit ball $B_1$ and for $\text{osc}\, v_\eps\leq 1$ in $B_1$.

Like in the proof of the H\"older continuity (see Lemma \ref{prop1}), we fix $x_0,y_0\in B_{3/4}$ and introduce the auxiliary function 
	$$\Phi(x, y):=v_\eps(x)-v_\eps(y)-L\phi(\abs{x-y})-\frac M2\abs{x-x_0}^2-\frac M2\abs{y-y_0}^2,$$
	where $\phi$ is defined below. Our goal is to show that $\Phi(x, y)\leq 0$ for $(x,y)\in B_r\times B_r$, where $r=\frac45$.
	We take
	\[
	\begin{split}
	\phi(t)=
	\begin{cases}
	t-t^{\delta}\phi_0& 0\le t\le t_1:=(\frac 1 {\delta\phi_0})^{1/(\delta-1)}  \\
	\phi(t_1)& \text{otherwise},
	\end{cases}
	\end{split}
	\]
	where $2>\delta>1$ and $\phi_0>0$ is such that  $t_1> 2 $ and $\gamma \phi_02^{\delta-1}\leq 1/4$. 
	
	Then
	\[
	\begin{split}
	\phi'(t)&=\begin{cases}
	1-\delta t^{\delta-1}\phi_0 & 0\le t\le t_1 \\
	0& \text{otherwise},
	\end{cases}\\
	\phi''(t)&=\begin{cases}
	-\delta(\delta-1)t^{\delta-2} \phi_0& 0< t\le t_1 \\
	0 & \text{otherwise}.
	\end{cases}
	\end{split}
	\]
	In particular, $\phi'(t)\in  [\frac34,1]$ and $\phi''(t)<0$ when $t\in (0,2)$.
	
	We argue by contradiction and assume that
	$\Phi$ has a positive
	maximum at some point $(x_1, y_1)\in \bar B_r\times \bar B_r$.
	Since $v_\eps$ is  continuous and
	its oscillation is bounded by 1, we get
	\begin{equation} 
	\begin{split}
	M\abs{x_1-x_0}^2\leq 2\osc_{B_1}{v_\eps}\leq 2,\\
	M\abs{y_1-y_0}^2\leq 2\osc_{B_1}{v_\eps}\leq 2.
	\end{split}
	\end{equation}
	Notice  that $x_1\neq y_1$.
	Choosing $M\geq\left(\dfrac{32}{r}\right)^2$, we have that $|x_1-x_0|< r/16$ and $|y_1-y_0|< r/16$ so that $x_1$ and $y_1$ are in $B_r$.

	Using the fact that $v_\eps$ is H\"older continuous, it follows, adjusting the constants (by  choosing $2M\leq C_{\overline{\alpha}}$), that  
	\begin{equation}\label{kulio}
	\begin{split}
	M\abs{x_1-x_0}\leq C_{\overline{\alpha}}&\abs{x_1-y_1}^{\beta/2},\\
	M\abs{y_1-y_0}\leq C_{\overline{\alpha}}&\abs{x_1-y_1}^{\beta/2}.
	\end{split}
	\end{equation}

		The  Jensen-Ishii's lemma ensures that
		\[
		\begin{split}
		&( \zeta_x,X)\in \overline{\mathcal{J}}^{2,+}\left(v_\eps(x_1)-\frac M2\abs{x_1-x_0}^2\right),\\
		&(  \zeta_y,Y)\in \overline{ \mathcal{J}}^{2,-}\left(v_\eps(y_1)+\frac M2\abs{y_1-y_0}^2\right),
		\end{split}
		\]
		that is 
		\[
		\begin{split}
		&(a,X+MI)\in \overline{\mathcal{J}}^{2,+}v_\eps(x_1),\\ &(b,Y-MI)\in \overline{\mathcal{J}}^{2,-}v_\eps(y_1),
		\end{split}
		\]
		where ($ \zeta_x= \zeta_y$)
		\[
		\begin{split}
		a&=L\phi'(|x_1-y_1|) \frac{x_1-y_1}{\abs{x_1-y_1}}+M(x_1-x_0)= \zeta_x+M(x_1-x_0),\\
		b&=L\phi'(|x_1-y_1|) \frac{x_1-y_1}{\abs{x_1-y_1}}-M(y_1-y_0)= \zeta_y-M(y_1-y_0).
		\end{split}
		\]
		If $L$ is large enough (depending on the H\"older constant  $C_{\overline{\alpha}}$), we have
		\begin{equation}\label{koivu1}
		2L\geq \sqrt{\abs{a}^2+\eps^2},\sqrt{\abs{b}^2+\eps ^2}\geq L\phi'(|x_1-y_1|) - C_{\overline{\alpha}}\abs{x_1-y_1}^{\overline{\alpha}/2}\ge \frac L2.
		\end{equation}
\noindent The Jensen-Ishii's lemma ensures that for any $\tau>0$, we can take $X, Y\in \mathcal{S}^n$ such that 
		\begin{equation}\label{matriceineq1}
		- \big[\tau+2\norm{B}\big] \begin{pmatrix}
		I&0\\
		0&I 
		\end{pmatrix}\leq
		\begin{pmatrix}
		X&0\\
		0&-Y 
		\end{pmatrix}
		\end{equation}
		and
		\begin{align}\label{matineq2}
			\begin{pmatrix}
		X&0\\
		0&-Y 
		\end{pmatrix}
		\le 
		\begin{pmatrix}
		B&-B\\
		-B&B 
		\end{pmatrix}
		+\frac2\tau \begin{pmatrix}
		B^2&-B^2\\
		-B^2&B^2 
		\end{pmatrix}\\
		=D^2\phi (x_1, y_1)+\dfrac{1}{\tau} \left(D^2\phi(x_1, y_1)\right)^2,\nonumber
		\end{align}
		where 
		\begin{align*}	
		B=&L\phi''(|x_1-y_1|) \frac{x_1-y_1}{\abs{x_1-y_1}}\otimes \frac{x_1-y_1}{\abs{x_1-y_1}}\\
		&\quad +\frac{L\phi'(|x_1-y_1|)}{\abs{x_1-y_1}}\Bigg( I- \frac{x_1-y_1}{\abs{x_1-y_1}}\otimes \frac{x_1-y_1}{\abs{x_1-y_1}}\Bigg)
		\end{align*}	
		and 
		\begin{align*}	
		B^2=
		&\frac{L^2(\phi'(|x_1-y_1|))^2}{\abs{x_1-y_1}^2}\Bigg( I- \frac{x_1-y_1}{\abs{x_1-y_1}}\otimes \frac{x_1-y_1}{\abs{x_1-y_1}}\Bigg)\\
		&\quad +L^2(\phi''(|x_1-y_1|))^2 \frac{x_1-y_1}{\abs{x_1-y_1}}\otimes \frac{x_1-y_1}{\abs{x_1-y_1}}.
\end{align*}	
		Observe that  $\phi''(t)+\dfrac{\phi'(t)}{t}\geq 0$, $\phi''(t)\leq 0$ for $t\in (0,2)$ and hence
		\begin{equation}\label{lilou}
		\norm{B}\leq L \phi'(|x_1-y_1|),
		\end{equation}
		\begin{equation}\label{filou}
	 \norm{B^2}\leq L^2\left(|\phi''(|x_1-y_1|)|+\dfrac{\phi'(|x_1-y_1|)}{|x_1-y_1|}\right)^2.
		\end{equation}
	Besides, for $\xi=\frac{x_1-y_1}{\abs{x_1-y_1}}$, we have
	
				\begin{equation*}
		\langle B\xi,\xi\rangle=L\phi''(|x_1-y_1|)<0, \qquad\langle B^2\xi,\xi\rangle=L^2(\phi''(|x_1-y_1|))^2.
		\end{equation*}
		Taking $\tau=4L\left(|\phi''(|x_1-y_1|)|+\dfrac{\phi'(|x_1-y_1|)}{|x_1-y_1|}\right)$,  we obtain for $\xi=\frac{x_1-y_1}{\abs{x_1-y_1}}$,
			\begin{align}\label{mercit}
		\langle B\xi,\xi\rangle +\frac2\tau \langle B^2\xi,\xi\rangle&=L\left(\phi''(|x_1-y_1|)+\frac2\tau L(\phi''(|x_1-y_1|))^2\right)\nonumber\\
		&\leq \dfrac{L}{2}\phi''(|x_1-y_1|)<0 .
		\end{align}
		Applying  inequalities \eqref{matriceineq1} and \eqref{matineq2} to any  vector $(\xi,\xi)$ with $\abs{\xi}=1$, we  have that $X- Y\leq 0$ and $\norm{X},\norm{Y}\leq 2\norm{B}+\tau$. \\
		
		We denote $\eta_1=\sqrt{\abs{a}^2+\eps ^2}$, $\eta_2=\sqrt{\abs{b}^2+\eps^ 2}$, and $g_\eps=f_\eps+\lambda u-\lambda v_\eps=h_\eps-\lambda v_\eps$, and we write the  viscosity inequalities
		\begin{equation}\label{comp1}
		\begin{split}
		-g_\eps(x_1)|\eta_1|^{-\gamma}&\leq  \tr (X+MI)+(p-2)\dfrac{\left\langle(X+MI) a, a\right\rangle}{|\eta_1|^2}\\
		g_\eps(y_1)|\eta_2|^{-\gamma}&\geq  \tr(Y-MI)+(p-2) \dfrac{\left\langle(Y-MI) b,b\right\rangle}{|\eta_2|^2}.
		\end{split}
		\end{equation}
	 We end up with 
		\begin{equation*}
		\begin{split}
			-g_\eps(x_1)|\eta_1|^{-\gamma}&\leq  \tr (A_\eps(a)(X+MI))\\
		g_\eps(y_1)|\eta_2|^{-\gamma}&\leq  -\tr (A_\eps(b)(Y-MI))
		\end{split}
		\end{equation*}
		where  
		\[A_\eps(\eta):= I+(p-2)\dfrac{\eta\otimes\eta}{|\eta|^2+\eps ^2}.\]
		Adding the two inequalities and using that $|\eta_1|^{-\delta}, |\eta_2|^{-\delta}\leq cL^{-\delta}$, we get 
		\begin{equation}\label{comp2}
		-2c\norm{g_\eps}_{L^\infty(B_1)}L^{-\delta} \leq  \tr (A_\eps(a)(X+MI))
		-\tr (A_\eps(b)(Y-MI)).
		\end{equation}
		It follows that 
		\begin{align}\label{gregory}
		-2c\norm{g_\eps}_{L^\infty(B_1)}L^{-\gamma} \leq  &\underbrace{\tr (A_\eps(a)(X-Y))}_{(I)}
		+\underbrace{tr ((A_\eps(a)-A_\eps(b))Y)}_{(II)}\nonumber\\
		&+\underbrace{M\big[\tr (A_\eps(a))+\tr (A_\eps(b))}_{(III)} \big].
		\end{align}
		
\noindent\textbf{Estimate of (I)}.		Observe that all the eigenvalues of $X-Y$ are non positive. Moreover,  applying the previous matrix inequality \eqref{matineq2} to the vector $(\xi,-\xi)$ where $\xi:=\frac{x_1-y_1}{|x_1-y_1|}$  and using \eqref{mercit}, 
		we obtain
		\begin{align}\label{camille12}
		\langle (X-Y) \xi, \xi\rangle&\leq 4\left(\langle B\xi,\xi\rangle+\frac2\tau\langle B^2\xi,\xi\rangle)\right)\nonumber \\
		&\leq 2 L\phi''(|x_1-y_1|)<0.
		\end{align}
		We deduce that  at least one of the eigenvalue of $X-Y$  denoted by  $\lambda_{i_0}$ is   negative and smaller than $2 L\phi''(|x_1-y_1|)$. The eigenvalues of $A_\eps(a)$ belong to $[\min(1, p-1), \max(1, p-1)]$.	Using \eqref{camille12}, it follows  that 
		\begin{align*}  
		\tr(A_\eps(a) (X-Y))&\leq \sum_i \lambda_i(A_\eps(a))\lambda_i(X-Y)\\
		&\leq \min(1, p-1)\lambda_{i_0}(X-Y)\\
		&\leq 2\min(1, p-1) L \phi''(|x_1-y_1|).
		\end{align*}
		
\noindent\textbf{Estimate of (II)}.		We have
		\begin{align*}A_\eps(a)-A_\eps(b)&=\left(\dfrac{a}{\eta_1}\otimes \dfrac{a}{\eta_1}-\dfrac{b}{\eta_2}\otimes \dfrac{b}{\eta_2}\right)(p-2)\\
	&=\left[\left(\dfrac{a}{\eta_1}-\dfrac{b}{\eta_2}\right)\otimes\dfrac{a}{\eta_1}
	+\dfrac{b}{\eta_2}\otimes\left(\dfrac{a}{\eta_1}-\dfrac{b}{\eta_2}\right)\right](p-2).
		\end{align*}
	It follows that 
	$$|A_\eps(a)-A_\eps(b)|\leq 2|p-2|\left|\dfrac{a}{\eta_1}-\dfrac{b}{\eta_2}\right|.$$
		Hence,
		\begin{align*}
		\tr( (A_\eps(a)-A_\eps(b)) Y)&\leq n\norm{Y}
		\norm{A_\eps(a)-A_\eps(b)}  \\
		&\leq 2n\abs{p-2}\norm{Y}\left|\dfrac{a}{\eta_1}-\dfrac{b}{\eta_2}\right|.
		\end{align*}
		We have
		\begin{equation*}
		\begin{split}
		\abs{\dfrac{a}{\eta_1}-\dfrac{b}{\eta_2}}&
\le 2\max\left( \frac{\abs{a- b}}{\abs{\eta_2}},\frac{ \abs{a- b}}{\abs{\eta_1}}\right)\\
		&\le \frac {8C_{\overline{\alpha}}}{ L}\abs{x_1-y_1}^{\overline{\alpha}/2},
		\end{split}
		\end{equation*}
		where we used \eqref{koivu1} and \eqref{kulio}.\\
		
		Using the estimates \eqref{matriceineq1}--\eqref{filou}, we obtain
		\begin{align*}
		\norm{Y}&=\max_{\ol \xi} |\langle Y\ol \xi, \ol \xi\rangle|
		\le 2 |\langle B\ol \xi,\ol \xi \rangle|+\frac4\tau|\langle B^2\ol \xi,\ol \xi \rangle| \\
		&\leq 4L\left( \frac{\phi'(|x_1-y_1|)}{\abs{x_1-y_1}}+ |\phi''(|x_1-y_1|)|\right).
		\end{align*}
		Hence, remembering that $|x_1-y_1|\leq 2$, we end up with
	\begin{align*}
	 \tr( (A_\eps(a)-A_\eps(b)) Y)&\leq 128n\abs{p-2}C_{\overline{\alpha}} \phi'(|x_1-y_1|) \abs{x_1-y_1}^{-1+\overline{\alpha}/2}\\
	 &\quad+128n\abs{p-2}C_{\overline{\alpha}} |\phi''(|x_1-y_1|)|.
	 \end{align*}
		 \textbf{Estimate of (III)}. Finally, we have
		$$ M(\tr(A_\eps(a))+\tr(A_\eps(b)))\leq 2Mn\max(1, p-1).$$

		Putting together the previous estimates with \eqref{gregory} and recalling the definition of $\phi$, we end up with 
		\begin{align*}
		0&\leq 128n\abs{p-2}C_{\overline{\alpha}}\left(\phi'(|x_1-y_1|)  \abs{x_1-y_1}^{\overline{\alpha}/2-1}+  |\phi''(|x_1-y_1|)|\right)\\
		&\quad+2\min(1, p-1) L \phi''(|x_1-y_1|)  + +2Mn\max(1, p-1)-2cL^{-\gamma}\norm{g_\eps}_{L^\infty(B_1)}\\
		&\leq 128n\abs{p-2}C_{\overline{\alpha}}  \abs{x_1-y_1}^{\overline{\alpha}/2-1}+2nM\max(1, p-1)\\
		&\quad +128n\abs{p-2}C_{\overline{\alpha}}\delta(\delta-1)\phi_0\abs{x_1-y_1}^{\delta-2}-2cL^{-\gamma}\norm{g_\eps}_{L^\infty(B_1)}\\
		&\quad-2\min(1, p-1)  \delta(\delta-1)\phi_0L\abs{x_1-y_1}^{\delta-2}.
		\end{align*}
		Taking $\delta=1+\overline{\alpha}/2>1$ and choosing $L$ large enough depending on $p,n, C_\beta$, $\norm{g_\eps}_{L^\infty}$ we get
		$$ 0\leq \dfrac{-\min(1, p-1)\delta(\delta-1)\phi_0}{200} L\abs{x_1-y_1}^{\delta-2}-2cL^{-\gamma}\norm{g_\eps}_{L^\infty(B_1)}<0,   $$
		which is  a contradiction. 
		Hence $\Phi(x,y)\leq 0$ for $(x,y)\in B_r\times B_{r}$. The desired result follows since for $x_0,y_0\in B_{\frac{3}{4}}$, we have $\Phi(x_0,y_0)\leq 0$, we get
		\[
		|v_\eps(x_0)-v_\eps(y_0)|\leq L\phi(|x_0-y_0|)\leq L|x_0-y_0|.
		\]
		
		The uniform Lipschitz estimate up to the boundary of $\Omega'$ follows from a barrier argument as in the proof of \cite[Lemma 2.2]{birdem5}. We use the following notation. $d(\cdot,\partial \Omega')$ is the distance function of $\partial \Omega'$, $$\Omega'_\delta:=\{x\in \Omega'\ \ :\ \ d(x,\Omega')\leq \delta\},$$ and $\bar d(x)=d(x,\partial \Omega')$ in $\Omega'_\delta$ is smooth and bounded in $\Omega'$.

		We construct a barrier function $$\bar u(x)=\bar v(x)+\bar w(x),$$ where $\bar v$ is a solution of
		\begin{equation}\label{kauser}
		\begin{cases}
		\mathcal{M}(D^2(\bar v))=0\quad &\text{in}\ \ \Omega'\\
		\bar v=u\quad &\text{on}\ \ \partial \Omega',
		\end{cases}
		\end{equation} 
		and $$\bar w_\delta(x)=\frac{C}{\delta}\frac{\bar d(x)}{1+\bar d(x)^\nu}\quad \text{with}\ \ \nu\in (0,1).$$ 
		Recall that the  Pucci operators are convex, uniformly elliptic and depend only on the Hessian. The boundary condition in \eqref{kauser} belonging to $C^{1,\alpha}(\overline {\Omega'})$, we get that $\bar v$ is in $C^2(\Omega')\cap C^{1,\alpha}(\overline{\Omega'})$. Moreover, $\norm{\bar v}_{L^\infty(\Omega')}\leq \norm{u}_{L^\infty(\Omega')}$ and $$\norm{D \bar v}_{L^\infty(\Omega')}\leq C(p,n,\Omega')\norm{D u}_{L^\infty(\Omega')}.$$ 
		
		Taking $$C\geq (1+|\Omega'|^\nu)(\norm{v_\eps}_{L^\infty(\Omega')}+\norm{u}_{L^\infty(\Omega')}),$$ we get $$\bar u\geq v_\eps\quad \text{on}\ \ \partial\Omega'_\delta. $$

		Proceeding as in \cite{birdem5}, we can find $$\delta_0=\delta_0(p,n,\Omega',\norm{f}_{L^\infty(\Omega)},\norm{u}_{W^{1,\infty}(\Omega')},\gamma)>0$$ such that 
		
		$$(|D\bar v+D\bar w_\delta|^2+\eps^2)^{\gamma/2}\mathcal{M}(D^2\bar w)\leq -2\norm{f}_{L^\infty(\Omega)}-\lambda(\norm{u}_{L^\infty(\Omega')} $$ on $\Omega'_\delta$ for any $\delta\leq \delta_0$.
		
		It follows that in $\Omega'_\delta$,
		\begin{align*}
		(|D \bar u|^2+\eps^2)^{\gamma/2}& \left(\Delta \bar u+(p-2)\frac{\langle D^2 \bar u D \bar u,D \bar u \rangle}{|D \bar u|^2+\eps^2}\right)\\
		& \leq (|D \bar u|^2+\eps^2)^{\gamma/2}\mathcal{M}(D^2\bar u)\\
		& \leq (|D \bar v+D\bar w|^2+\eps^2)^{\gamma/2}\mathcal{M}(D^2 \bar w)\\
		& \leq 2\norm{f}_{L^\infty(\Omega)}-\lambda(\norm{u_\eps}_{L^\infty(\Omega')}+\norm{\bar v}_{L^\infty(\Omega')})+\lambda \bar w \\
		&\leq -f_\eps-\lambda(u-\bar u).
		\end{align*}
		By comparison principle, we have $u_\eps\leq \bar u$ in $\Omega'_{\delta}$, and the Lipschitz estimate follows.
\end{proof}

\begin{proposition}\label{appendixkolmonen}
Let $v_\eps$
be a smooth  solution  of \eqref{perti}
with $\gamma\in (-1,0]$, $p\in (1, \infty)$ and $\eps \in (0,1)$.
Then
there exists a positive constant $C=C(n, p, \gamma, \Omega', \norm{h_\eps}_{L^\infty(\Omega')}, \norm{v_\eps}_{L^\infty(\Omega')},\norm{u}_{W^{1,\infty}(\Omega')})$
such that for every $x, y\in \Omega'$, we have
\begin{equation*}
|v_\eps(x)-v_\eps (y)|\leq C|x-y|.
\end{equation*}
\end{proposition}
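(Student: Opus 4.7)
The argument closely mirrors the Ishii--Lions scheme of Proposition \ref{posga}, with the singular right-hand side of \eqref{perti} handled in place of the degenerate prefactor. I would first record a preliminary uniform H\"older bound on $v_\eps$: since $-1<\gamma\leq 0$, one has $(|\eta|^2+\eps^2)^{-\gamma/2}\leq C(1+|\eta|)$ on $\Rn$, so the operator defining \eqref{perti} satisfies, for $|\eta|\geq 1$,
\begin{equation*}
F(x,s,\eta,X)\geq 0 \implies \mathcal{M}^+(X)+\lambda|s|+C\norm{f_\eps}_{L^\infty}(1+|\eta|)+\lambda\norm{u}_{L^\infty}\geq 0,
\end{equation*}
and analogously the reverse. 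The interior H\"older results of \cite{imbersillarge, delarue, imbabp} together with the boundary estimate of \cite{gripen} then give $\norm{v_\eps}_{C^{\bar\alpha}(\overline{\Omega'})}\leq C_{\bar\alpha}$ independently of $\eps$, exactly as in Proposition \ref{appendix1}.

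For the interior Lipschitz estimate, I would reduce by scaling, translation and a covering argument to working on $B_1$ with $\osc_{B_1} v_\eps\leq 1$, fix $x_0,y_0\in B_{3/4}$, and introduce the auxiliary function
\begin{equation*}
\Phi(x,y):=v_\eps(x)-v_\eps(y)-L\phi(|x-y|)-\tfrac{M}{2}|x-x_0|^2-\tfrac{M}{2}|y-y_0|^2,
\end{equation*}
with the same concave $\phi(t)=t-\phi_0 t^\delta$, $\delta\in(1,2)$, as in Proposition \ref{posga}, and argue by contradiction that $\Phi$ has no positive maximum on $\overline{B_r}\times \overline{B_r}$. At a putative maximizer $(x_1,y_1)$, Jensen--Ishii's lemma produces $(a,X+MI)\in\overline{\mathcal{J}}^{2,+}v_\eps(x_1)$ and $(b,Y-MI)\in\overline{\mathcal{J}}^{2,-}v_\eps(y_1)$ with the identical quantitative controls on $|a|,|b|,\|X\|,\|Y\|$ as there. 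Subtracting the two viscosity inequalities associated to \eqref{perti} yields, with $A_\eps(\eta):=I+(p-2)\eta\otimes\eta/(|\eta|^2+\eps^2)$,
\begin{align*}
&\tr(A_\eps(a)(X-Y))+\tr((A_\eps(a)-A_\eps(b))Y)+M(\tr A_\eps(a)+\tr A_\eps(b))\\
&\quad\geq -\norm{f_\eps}_{L^\infty(\Omega')}\left[(|a|^2+\eps^2)^{-\gamma/2}+(|b|^2+\eps^2)^{-\gamma/2}\right]-2\lambda(\norm{u}_{L^\infty}+\norm{v_\eps}_{L^\infty}).
\end{align*}

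The decisive point is the treatment of the singular right-hand side: since $|a|,|b|\leq 2L$ and $|\gamma|<1$, one has $(|a|^2+\eps^2)^{-\gamma/2}+(|b|^2+\eps^2)^{-\gamma/2}\leq C(p,\gamma)L^{|\gamma|}$, which is \emph{sublinear} in $L$. The three trace terms on the left are estimated exactly as in Proposition \ref{posga}: applying the matrix inequality to the vector $(\xi,-\xi)$ gives the dominant negative bound $\tr(A_\eps(a)(X-Y))\leq 2\min(1,p-1)L\phi''(|x_1-y_1|)\leq -c(p,n)\phi_0 L$; the mixed term $\tr((A_\eps(a)-A_\eps(b))Y)$ is controlled by a constant depending on $M,p,n$ and $C_{\bar\alpha}$ via the preliminary H\"older bound as in \eqref{kulio}; and $M(\tr A_\eps(a)+\tr A_\eps(b))\leq 2nM\max(1,p-1)$. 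Choosing $L$ sufficiently large, depending on $p,n,\gamma,\phi_0,M,C_{\bar\alpha}$, $\norm{f_\eps}_{L^\infty}$, $\norm{u}_{L^\infty}$ and $\norm{v_\eps}_{L^\infty}$, the linear negative contribution $-c\phi_0 L$ dominates the sublinear singular term $CL^{|\gamma|}$ and the $O(M)$ remainder, contradicting $\Phi(x_1,y_1)>0$. The up-to-the-boundary extension is obtained by the same barrier construction $\bar u=\bar v+\frac{C}{\delta}\bar d/(1+\bar d^\nu)$ as in Proposition \ref{posga}, with $\bar v$ solving $\mathcal{M}(D^2\bar v)=0$ in $\Omega'_\delta$ with boundary data $u$; this works because the left-hand side of \eqref{perti} is uniformly elliptic with $\eps$-independent ellipticity constants and the right-hand side is bounded by $C(1+|D\bar u|)$ by the same sublinearity argument, so the standard computation secures $\bar u\geq v_\eps$ near $\partial\Omega'$ for $\delta$ small.

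The principal obstacle throughout is precisely the singular factor $(|Dv_\eps|^2+\eps^2)^{-\gamma/2}$; everything hinges on the bound $|\gamma|<1$ built into the assumption $\gamma\in(-1,0]$, which renders this factor sublinear in the gradient and therefore absorbable by the linear-in-$L$ negative term produced by $\tr(A_\eps(a)(X-Y))$ in the doubling scheme, and by the sublinear barrier growth at the boundary.
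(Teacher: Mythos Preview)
Your proposal is correct and follows essentially the same Ishii--Lions scheme as the paper: a preliminary uniform H\"older bound, then the doubling-of-variables argument with the concave profile $\phi$, the key observation being that the singular factor contributes a term of order $L^{|\gamma|}$ with $|\gamma|<1$, hence sublinear in $L$ and absorbable by the negative contribution from $\tr(A_\eps(a)(X-Y))$. The only noteworthy variation is at the boundary, where the paper invokes \cite{silvestres14} (uniform ellipticity together with subquadratic gradient growth of the right-hand side) rather than reproducing the explicit barrier construction you propose.
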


\begin{proof}
It follows from Lemma \ref{lipe2} that solutions of \eqref{perti} are H\"older continuous with uniform H\"older bound $$C(p,n,\gamma)(||v_\eps||_{L^\infty(\Omega')}+||\overline{h}_\eps||_{L^\infty(\Omega')}+||f||^{\frac{-1}{1+\gamma}}_{L^\infty(\Omega)}+\norm{u}_{L^\infty(\Omega)})$$ The proof proceeds similarly to the previous proof of Proposition \ref{posga}. The main changes occur when writing the viscosity inequalities: Now we write
\begin{equation*}
		\begin{split}
			-\overline{h}_\eps(x_1)-f_\eps(x_1)|\eta_1|^{-\gamma}&\leq  \tr (D(a)(X+MI))\\
		\overline{h}_\eps(y_1)+f_\eps(y_1)|\eta_2|^{-\gamma}&\leq  -\tr (D(b)(Y-MI)).
		\end{split}
		\end{equation*}
		and adding the two inequalities we have 	
		\begin{equation*}
		-2||\overline{h}_\eps||_{L^\infty (\Omega')}-2c\norm{f_\eps}_{L^\infty(\Omega')}L^{-\delta} \leq  \tr (D(a)(X+MI))
		-\tr (D(b)(Y-MI)).
		\end{equation*}	
		By choosing $$L\geq c(||v_\eps||_{L^\infty(\Omega')}+||\overline{h}_\eps||_{L^\infty(\Omega')}+||f||^{\frac{-1}{1+\gamma}}_{L^\infty(\Omega')}),$$
		the proof is completed as the proof of Proposition \ref{posga}.
		
		The uniform up to the boundary Lipschitz estimate follows from \cite{silvestres14} due to the regularity of the boundary data and the uniform ellipticity of the operator and the subquadratic growth with respect to the gradient  of the right hand term of the equation.

\end{proof}

\end{appendix}
\bibliographystyle{siam}

\def\cprime{$'$} \def\polhk#1{\setbox0=\hbox{#1}{\ooalign{\hidewidth
  \lower1.5ex\hbox{`}\hidewidth\crcr\unhbox0}}}

\end{document}